\newtheorem{theorem}{Theorem}[section]
\newtheorem{proposition}[theorem]{Proposition}
\newtheorem{lemma}[theorem]{Lemma}
\newtheorem{corollary}[theorem]{Corollary}
\newtheorem{definition}[theorem]{Definition}
\newtheorem{remark}[theorem]{Remark}
\newtheorem{example}[theorem]{Example}
\def\S{{\mathbb S}}
\def\N{{\mathbb N}}
\def\KX{{K\langle X \rangle}}
\def\sgn{{\rm sgn}}
\def\lcm{{\mathrm{lcm}}}
\def\w{{\mathrm{w}}}
\def\Im{{\mathrm{Im\,}}}
\def\End{{\mathrm{End}}}
\def\Mon{{\mathrm{Mon}}}
\def\Gr{Gr\"obner}
\def\lm{{\mathrm{lm}}}
\def\lc{{\mathrm{lc}}}
\def\lt{{\mathrm{lt}}}
\def\LM{{\mathrm{LM}}}
\def\spoly{{\mathrm{spoly}}}
\def\Reduce{{\textsc{Reduce}}}
\def\SkewGBasis{{\textsc{SkewGBasis}}}
\def\SigmaGBasis{{\textsc{SigmaGBasis}}}
\def\FreeGBasis{{\textsc{FreeGBasis}}}
\def\hN{{\hat{\N}}}
\def\hSigma{{\hat{\Sigma}}}
\def\hS{{\hat{S}}}
\def\hpi{{\hat{\pi}}}
\begin{document}

\title[Skew polynomial rings, Gr\"obner bases $\ldots$]
{Skew polynomial rings, Gr\"obner bases and the letterplace
embedding of the free associative algebra}.

\author[R. La Scala]{Roberto La Scala$^*$}

\author[V. Levandovskyy]{Viktor Levandovskyy$^{**}$}

\address{$^*$ Dipartimento di Matematica, via Orabona 4, 70125 Bari, Italia}
\email{lascala@dm.uniba.it}

\address{$^{**}$ RWTH Aachen, Templergraben 64, 52062 Aachen, Germany}
\email{levandov@math.rwth-aachen.de}

\thanks{Partially supported by Universit\`a di Bari, Ministero dell'Universit\`a
e della Ricerca, and by the DFG Graduiertenkolleg ``Hierarchie und Symmetrie in
mathematischen Modellen'' at RWTH Aachen, Germany}

\subjclass[2000] {Primary 16Z05. Secondary 13P10, 68W30}


\keywords{Skew polynomial rings; Free algebras; \Gr\ bases}

\begin{abstract}
In this paper we introduce an algebra embedding $\iota:\KX\to S$
from the free associative algebra $\KX$ generated by a finite or
countable set $X$ into the skew monoid ring $S = P * \Sigma$ defined
by the commutative polynomial ring $P = K[X\times\N^*]$ and by the monoid
$\Sigma = \langle \sigma \rangle$ generated by a suitable endomorphism
$\sigma:P\to P$. If $P = K[X]$ is any ring of polynomials in a countable
set of commuting variables, we present also a general \Gr\ bases theory
for graded two-sided ideals of the graded algebra $S = \bigoplus_i S_i$
with $S_i = P \sigma^i$ and $\sigma:P \to P$ an abstract endomorphism
satisfying compatibility conditions with ordering and divisibility
of the monomials of $P$. Moreover, using a suitable grading
for the algebra $P$ compatible with the action of $\Sigma$, we obtain
a bijective correspondence, preserving \Gr\ bases, between graded
$\Sigma$-invariant ideals of $P$ and a class of graded two-sided ideals
of $S$. By means of the embedding $\iota$ this results in the unification,
in the graded case, of the \Gr\ bases theories for commutative and
non-commutative polynomial rings. Finally, since the ring of ordinary
difference polynomials $P = K[X\times\N]$ fits the proposed theory
one obtains that, with respect to a suitable grading, the \Gr\ bases
of finitely generated graded ordinary difference ideals can be computed
also in the operators ring $S$ and in a finite number of steps up
to some fixed degree.
\end{abstract}

\maketitle

\section{Introduction}

Let $P$ be a $K$-algebra and let $\Sigma$ be a monoid of endomorphisms
of $P$. If $I$ is an ideal of $P$ which is invariant under the maps
in $\Sigma$ then it is possible to codify the action of $P$ and $\Sigma$
over $I$ as a single left module structure with respect to the skew monoid
(or semigroup) ring $S = P * \Sigma$. The study of some properties of $I$,
as for instance its finite $\Sigma$-generation, can be reduced hence
to that of general properties of the operators ring $S$ as its
Noetherianity (see \cite{MCR,La}). Ideals which are stable under the action
of monoids of endomorphisms or groups of automorphisms are natural
in many contexts as the representation theory (a classical reference
is \cite{DCEP}), or in the study of PI-algebras \cite{Dr,GZ} where
$P$ is the free associative algebra and $\Sigma$ the complete
monoid of endomorphisms of $P$. Another context of relevant interest
is the study of so-called ``difference ideals'' \cite{Le} which are
ideals invariant under shift operators in applications to combinatorics,
(nonlinear) differential and difference equations. For the viewpoint
of computing in the ring of (differential-difference) operators
an important contribution is \cite{MS}.

To control in an effective way the structure of the left $S$-module $P/I$
one generally needs to compute a $K$-basis of it. If $P$ is a ring
of polynomials in commutive or non-commutative variables and one fixes
a suitable ordering for the monomials of $P$, then a $K$-linear basis
of monomials for $P/I$ can be obtained by using the elements of a
suitable generating set of $I$ as rewriting rules. Such generating set
is usually called a ``\Gr\ basis'' of $I$. Since $I$ is a $\Sigma$-invariant
ideal, it is natural to consider $\Sigma$-bases of $I$ that is sets
$G\subset I$ such that $I$ is the smallest $\Sigma$-ideal of $P$
containing $G$. In other words, $G$ is a generating set of $I$ as left
$S$-module. It follows that one has to harmonize the notion of \Gr\ basis
with that of $\Sigma$-basis and attempts in this direction can be found
for instance in \cite{AH,BD} and also in \cite{DLS,LSL}. If the elements
of $\Sigma$ are automorphisms, the main obstacle in the definition
of a \Gr\ $\Sigma$-basis is that their action on $P$ does not preserve
the monomial ordering. Then, it has been shown in \cite{BD} and before
in \cite{LSL} that an appropriate setting to define \Gr\ $\Sigma$-bases
is that of a commutative polynomial ring $P = K[X]$ in an infinite
number of variables and a monoid $\Sigma$ of monomial monomorphisms
of infinite order which are compatible with the ordering and divisibility
of monomials of $P$.

In this paper we propose a systematic study of the case when $\Sigma$
is generated by a single map $\sigma$. In this case the skew monoid ring
$S$ coincides with the skew polynomial ring $P[s;\sigma]$ which is
an Ore extension where $\sigma$-derivation is zero. The approach
we follow is to consider an abstract map $\sigma$ satisfying compatibility
conditions able to provide a ``natural'' \Gr\ bases theory. Note that
this generalizes in particular the results contained in \cite{We}
where the map $\sigma:x_i\mapsto x_i^e$ with $e > 1$ has been studied.
We choose to consider a single endomorphism essentially because
a major application of our theory is the unification, in the graded case,
of the \Gr\ bases theory for non-commutative polynomials introduced
in \cite{Gr1,Mo,Uf} with the classical commutative theory based on
the notion of S-polynomial (see for instance \cite{GP}).
In Section 6 we show in fact that there exists an algebra embedding
$\iota:\KX\to S$ where $\KX$ is the free associative algebra generated
by the variables $x_i$ and $S$ is the skew polynomial ring defined by
the algebra $P$ of commutative polynomials in double indexed variables
$x_i(j)$ and the endomorphism $\sigma:P\to P$ such that
$x_i(j)\mapsto x_i(j+1)$, for all $i,j$. This algebra embedding is a
significant improvement of the linear map $\iota':\KX \to P$ defined as
$x_{i_1}\cdots x_{i_d}\mapsto x_{i_1}(1)\cdots x_{i_d}(d)$ and
introduced by \cite{Fe,DRS} for the aims of physics and
invariant/representation theory. In fact, the use of the map $\iota$
clarifies the phenomenon found in \cite{LSL} of a bijective correspondence
between all graded two-sided ideals of $\KX$ and some class of
$\Sigma$-invariant ideals of $P$. Note that in the same paper,
a competitive new algorithm for non-commutative homogeneous \Gr\ bases
based on this correspondence has been implemented and experimented
in \textsc{Singular} \cite{DGPS}.

In Section 2 one finds a brief account of the equivalence between
the notion of $\Sigma$-invariant $P$-module and that of left $S$-module,
together with the description of some properties of the generating sets
of graded two-sided ideals of $S = \bigoplus_i S_i$ with $S_i = P s^i$.
A \Gr\ basis theory for such ideals is introduced in Section 3 where
we assume $P = K[x_0,x_1,\ldots], \Sigma = \langle \sigma \rangle$ and
$\sigma:P\to P$ be a monomorphism of infinite order sending monomials into
monomials. Additional assuptions for $\sigma$ are that
$\gcd(\sigma(x_i),\sigma(x_j)) = 1$ for $i\neq j$ and the monomial ordering
of $P$ is such that $m\prec n$ implies that $\sigma(m)\prec \sigma(n)$,
for all monomials $m,n$. Such conditions are quite natural in many
contexts as the shift operators defining difference ideals \cite{Le}
or the maps used in \cite{BD}. Note that the algorithms we introduce
for the computation of homogeneous \Gr\ bases in $S$ are based on
the free $P$-module structure of this ring and hence they appear as
a variant of the classical module Buchberger algorithm where the number
of S-polynomials to be considered is reduced owing to the symmetry
defined by $\Sigma$ on the graded ideals of the ring $S$.

In Section 5 we analyze the notion of \Gr\ $\Sigma$-basis for
$\Sigma$-invariant ideals of $P$. When $P$ can be endowed
with a suitable grading compatible with the action of $\Sigma$,
we describe a bijective correspondence between all graded
$\Sigma$-invariant ideals of $P$ and some class of graded two-sided
ideals of $S$. Such correspondence preserves \Gr\ bases and gives
rise to a ``duality'' between homogeneous algorithms in $P$ and in $S$.
Note that for finitely generated ideals all these procedures admit
termination when truncated at some degree. As we said earlier, in Section 6
the algebra embedding $\iota:\KX\to S$ is introduced and a bijective
correspondence between the ideals of $\KX$ and suitable ideals of $S$
is hence obtained by extension. The \Gr\ bases are preserved by this
correspondence and one obtains an alternative algorithm for computing
non-commutative homogeneous \Gr\ bases of $\KX$ in the free $P$-module $S$.
By means of the bijection of the Section 5, we reobtain in Section 7
the ideal correspondence and related algorithms introduced in \cite{LSL}
which provide the computation of non-commutative homogeneous \Gr\ bases
directly in $P$. Therefore, the theory for such bases can be deduced
by the classical Buchberger algorithm for commutative polynomial rings.
In Section 8 we propose the explicit computation of a finite \Gr\ basis
of an ideal of ordinary difference polynomials that can be obtained
as a special case by the algorithms we introduced. Moreover, in this
section we provide some timings obtained by an improvement of the library
\texttt{freegb.lib} of \textsc{Singular} initially developed for
\cite{LSL}. Finally, in Section 9 we propose some conclusions and
suggestions for future developments of the theory of \Gr\ $\Sigma$-bases
and its methods.

The preliminary full-size version of this paper has been accepted for
oral presentation at MEGA 2011 conference in Stockholm.


\section{Modules over skew monoid rings}

Fix $K$ any field and let $P$ be a commutative $K$-algebra. Let now
$\Sigma\subset\End_K(P)$ a submonoid of the monoid of $K$-algebra
endomorphisms of $P$. Denote $S = P*\Sigma$ the {\em skew monoid ring
defined by $\Sigma$ over $P$} that is $S$ is the free $P$-module with
(left) basis $\Sigma$ and the multiplication is defined by the identity
$\sigma f = \sigma(f) \sigma$, for all $f\in P,\sigma\in\Sigma$.
If $\Sigma\neq \{id\}$ then $S$ is a non-commutative $K$-algebra where
the ring $P$ and the monoid $\Sigma$ are embedded. Note that if $\Sigma =
\langle \sigma \rangle$ with $\sigma:P\to P$ a map of infinite order
one has that $S\approx P[s;\sigma]$, the skew polynomial ring in the variable
$s$ and coefficients in $P$ defined by the endomorphism $\sigma$.
Moreover, if $P$ is a domain and all maps in $\Sigma$ are injective then
$S$ is also a domain. To simplify notations, we denote $f^\sigma = \sigma(f)$
for any $f\in P,\sigma\in\Sigma$.

\begin{definition}
Let $M$ be a $P$-module. We call $M$ {\em a $\Sigma$-invariant module}
if there is a monoid homomorphism $\rho:\Sigma\to End_K(M)$ such that
$\rho(\sigma)(f x) = f^\sigma\rho(\sigma)(x)$, for all $f\in P,x\in M$
and $\sigma\in\Sigma$. We denote as usual $\sigma\cdot x = \rho(\sigma)(x)$.
If $M,M'$ are $\Sigma$-invariant modules and $\varphi:M\to M'$ is a
$P$-module homomorphism such that $\varphi(\sigma\cdot x) =
\sigma\cdot\varphi(x)$ for all $x\in M,\sigma\in\Sigma$, then the map
$\varphi$ is called a {\em homomorphism of $\Sigma$-invariant modules}.
\end{definition}

\begin{proposition}
\label{invar2leftmod}
The category of $\Sigma$-invariant $P$-modules is equal to the category
of left $S$-modules.
\end{proposition}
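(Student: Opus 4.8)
The plan is to verify that the two categories have the same objects and the same morphisms, by exhibiting the obvious passage in both directions and checking they are mutually inverse (indeed, that they are literally the identity on underlying sets of data). Since $S = P * \Sigma$ is generated as a $K$-algebra by the subring $P$ and the submonoid $\Sigma$, subject only to the relations $\sigma f = f^\sigma \sigma$, specifying a left $S$-module structure on an abelian group $M$ is the same as specifying a $P$-module structure together with an action of the monoid $\Sigma$ by additive ($K$-linear) endomorphisms, such that the mixed relation $\sigma \cdot (f x) = f^\sigma (\sigma \cdot x)$ holds for all $f \in P$, $\sigma \in \Sigma$, $x \in M$. This is exactly the data in the Definition of a $\Sigma$-invariant $P$-module: the monoid homomorphism $\rho:\Sigma \to \End_K(M)$ encodes the $\Sigma$-action, and the compatibility condition $\rho(\sigma)(fx) = f^\sigma \rho(\sigma)(x)$ is the mixed relation.

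Concretely, I would argue as follows. First, given a left $S$-module $M$: restricting the action along the inclusion $P \hookrightarrow S$ makes $M$ a $P$-module, and setting $\rho(\sigma)(x) = \sigma x$ (the $S$-action of the element $\sigma \in \Sigma \subset S$) gives a monoid homomorphism into $\End_K(M)$ — it lands in $\End_K$ because each $\sigma$ acts additively and $K$-linearly (as $K \subset P$ acts $K$-linearly), and it is a monoid homomorphism because multiplication in $S$ restricts to the monoid operation on $\Sigma$. The identity $\sigma f = f^\sigma \sigma$ in $S$, applied to $x$, yields precisely $\rho(\sigma)(fx) = f^\sigma \rho(\sigma)(x)$. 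Conversely, given a $\Sigma$-invariant $P$-module $M$, every element of $S$ can be written uniquely as $\sum_i f_i \sigma_i$ with $f_i \in P$, $\sigma_i \in \Sigma$ (using that $\Sigma$ is a left $P$-basis of $S$), so I define $(\sum_i f_i \sigma_i) \cdot x = \sum_i f_i (\rho(\sigma_i)(x))$. One checks this is well-defined and associative: associativity reduces, by $K$-bilinearity, to checking it on the generators $f \in P$ and $\sigma \in \Sigma$, where the only nontrivial instance is $(\sigma f) \cdot x = \sigma \cdot (f \cdot x)$, and both sides equal $f^\sigma \rho(\sigma)(x)$ by the compatibility condition — the left side because $\sigma f = f^\sigma \sigma$ in $S$. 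These two constructions are visibly inverse to each other. For morphisms: a $P$-linear map $\varphi: M \to M'$ is $S$-linear iff it commutes with the action of every element of $S$, and since $S$ is generated by $P$ and $\Sigma$ and $\varphi$ is already $P$-linear, this holds iff $\varphi(\sigma \cdot x) = \sigma \cdot \varphi(x)$ for all $\sigma \in \Sigma$ — which is exactly the defining condition for a homomorphism of $\Sigma$-invariant modules.

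The main point requiring care — and the only place where anything is "checked" rather than merely unwound — is the well-definedness and associativity of the $S$-action in the converse direction: one must use the freeness of $S$ as a left $P$-module on $\Sigma$ to know the expression $\sum f_i \sigma_i$ is unique, and one must verify that the single relation $\sigma f = f^\sigma \sigma$ is respected, which is where $\rho$ being a monoid homomorphism and satisfying the compatibility condition both get used. Everything else is bookkeeping. I would therefore organize the writeup around these two constructions, state that they are inverse on objects, and dispatch the morphism statement in a sentence using the generation of $S$ by $P \cup \Sigma$.
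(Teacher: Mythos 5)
Your proof is correct and follows essentially the same route as the paper's: restricting along $P\hookrightarrow S$ and $\Sigma\hookrightarrow S$ in one direction, defining the $S$-action by $P$-linear extension over the free basis $\Sigma$ in the other, and dispatching morphisms by $P$-linearity plus $\Sigma$-equivariance. You are a bit more explicit than the paper about where the relation $\sigma f=f^\sigma\sigma$ and the monoid-homomorphism property of $\rho$ are actually used to verify associativity, but the decomposition and the key observations are the same.
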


\begin{proof}
Let $M$ be a left $S$-module. Then $M$ is a $P$-module since
$P\subset S$. By restriction to $\Sigma\subset S$, one has a monoid
homomorphism $\rho:\Sigma\to\End_K(M)$. Moreover we have
$\sigma\cdot(f x) = (\sigma f)\cdot x = (f^\sigma \sigma)\cdot x =
f^\sigma (\sigma\cdot x)$, for all $f\in P, x\in M$ and $\sigma\in\Sigma$.
Let now $M$ be a $\Sigma$-invariant $P$-module. We can define a left
$S$-module structure by putting $(\sum_i f_i \sigma_i)\cdot x =
\sum_i f_i (\sigma_i\cdot x)$ with $f_i\in P,\sigma_i\in\Sigma$ and
$x\in M$. Consider a homomorphism $\varphi:M\to M'$ of $\Sigma$-invariant
modules. Since $\varphi$ is $P$-linear, one has
$\varphi((\sum_i f_i \sigma_i)\cdot x) = \sum_i f_i \varphi(\sigma_i\cdot x) =
\sum_i f_i (\sigma_i\cdot \varphi(x)) = (\sum_i f_i \sigma_i)\cdot \varphi(x)$.
\end{proof}

Let $M$ be a left $S$-module and let $G = \{g_i\}\subset M$ be a generating
set of $M$. Note that $x\in M$ if and only if $x =
\sum_{i,\sigma} f_{i\sigma} \sigma\cdot g_i$ with $f_{i\sigma}\in P$
that is $M$ is generated by $\Sigma\cdot G = \{\sigma\cdot g_i\}_{i,\sigma}$
as $P$-module. We want now to describe homogeneous bases for graded
two-sided ideals of $S$. In fact, the algebra $S$ has a natural grading
over the monoid $\Sigma$ that is $S = \bigoplus_{\sigma\in\Sigma} S_\sigma$
and $S_\sigma S_\tau\subset S_{\sigma\tau}$ where $S_\sigma = P \sigma$.
Note that $S_{id} = P$, all $S_\sigma$ are $P$-submodules of $S$ and
$S_\sigma \tau = S_{\sigma\tau}$.

\begin{proposition}
\label{gen2lgen}
Let $J\subset S$ be a graded (two-sided) ideal and let $G\subset J$ be a set
of homogeneous elements. Then $G$ is a generating set of $J$ if and only if
$G\,\Sigma$ is a left basis of $J$ that is $\Sigma\,G\,\Sigma$
is a basis of $J$ as $P$-module.
\end{proposition}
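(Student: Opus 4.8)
The plan is to prove both implications by exploiting the $\Sigma$-grading $S = \bigoplus_{\sigma\in\Sigma} S_\sigma$ together with the multiplication rule $\sigma f = f^\sigma \sigma$. First I would spell out what it means for a set of homogeneous elements to generate $J$ as a two-sided ideal: an arbitrary element of $J$ is a sum of terms $a\, g\, b$ with $g\in G$, $a,b\in S$, and by $P$-linearity each of $a,b$ can be reduced to a monomial $p\sigma$, $q\tau$ with $p,q\in P$, $\sigma,\tau\in\Sigma$. So every element of $J$ is a $P$-linear combination of expressions of the form $p\sigma\, g\, q\tau$. The key computation is to push all monoid elements to the right: using $\sigma f = f^\sigma\sigma$ repeatedly (and that $g$ is homogeneous, say $g\in S_\mu$, so $g = g_0\mu$ with $g_0\in P$), one gets
\[
p\,\sigma\, g\, q\,\tau \;=\; p\,\sigma\,(g_0\mu)\,(q\tau) \;=\; p\, g_0^\sigma\, (\sigma\mu)(q\tau) \;=\; p\, g_0^\sigma\, q^{\sigma\mu}\,(\sigma\mu\tau),
\]
which equals $\bigl(p\, g_0^\sigma\, q^{\sigma\mu}\bigr)\cdot(g\,(\sigma\mu\tau)\mu^{-1})$ — more carefully, since $g\,\sigma\mu\tau$ need not itself be $g$ times a monoid element unless we are careful, the cleaner statement is that $p\sigma g q\tau$ lies in the $P$-span of $\{g\,\nu : \nu\in\Sigma\}$, because $\sigma g q\tau = (g_0^\sigma q^{\sigma\mu})\,\sigma\mu\tau$ and the scalar $g_0^\sigma q^{\sigma\mu}$ lies in $P$ while $g\nu$ for $\nu = \mu^{-1}\sigma\mu\tau$ recaptures $g_0\sigma\mu\tau$. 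To avoid any issue with non-invertibility of monoid elements I would instead argue directly: $p\sigma g q\tau = p\,(g_0^\sigma q^{\sigma\mu})\,\sigma\mu\tau$, and noting that $g\,\omega = g_0\mu\omega$ for any $\omega\in\Sigma$, we see $p\sigma g q\tau$ is $(g_0^\sigma q^{\sigma\mu}\text{-scaled})$... the honest bookkeeping is that $g_0\mu\omega$ with $\omega$ chosen so that $\mu\omega = \sigma\mu\tau$ may not exist, so the right framing is: $\sigma g q\tau = f\,\sigma\mu\tau$ with $f = g_0^\sigma q^{\sigma\mu}\in P$, and separately $g\cdot(\sigma\mu\tau) $ is not available, but $g \Sigma$ spans the relevant piece because $p\sigma g q\tau = p f\,\sigma\mu\tau$ and we compare with $g\cdot\rho$; the resolution is that we need $g\,\rho$ for $\rho$ with $\mu\rho = \sigma\mu\tau$. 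This is exactly the point where injectivity/cancellation properties of $\Sigma$ (established in Section 2, as $S$ is a domain when $P$ is and the maps are injective) will or will not be needed; I expect this to be the main obstacle.

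Granting that bookkeeping, the forward direction follows: if $G$ generates $J$ then every element of $J$ is a $P$-linear combination of elements $g\nu$ with $g\in G$, $\nu\in\Sigma$, i.e. $G\Sigma$ is a left $P$-basis (spanning set) of $J$, hence $\Sigma G\Sigma$ — really $P\cdot(G\Sigma)$ — is a $P$-spanning set; "basis" here is used loosely to mean generating set of the free/relevant module, matching the paper's usage in Proposition~\ref{gen2lgen}'s statement and the remark preceding it that $M$ is generated by $\Sigma\cdot G$ as a $P$-module. For the converse, if $G\Sigma$ is a left basis of $J$ as a $P$-module, then in particular $G\Sigma\subset J$ and $G\Sigma$ generates $J$ over $P$, so a fortiori $G$ generates $J$ as a two-sided ideal, since the two-sided ideal generated by $G$ contains all $g\nu$ (right multiplication by $\nu\in\Sigma\subset S$) and hence their $P$-combinations.

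The steps, in order: (1) reduce a general two-sided-ideal element to a $P$-combination of $p\sigma g q\tau$; (2) do the commutation computation $\sigma g q\tau = (g_0^\sigma q^{\sigma\mu})(\sigma\mu\tau)$ using homogeneity of $g$ and the rule $\sigma f = f^\sigma\sigma$; (3) conclude $p\sigma g q\tau \in P\cdot(G\Sigma)$, handling the matching of $\sigma\mu\tau$ to $\mu\omega$ — the delicate point — by observing that $g(\sigma\mu\tau)\in S_{\sigma\mu\tau}$ and that $g_0(\sigma\mu\tau)$ is precisely the element of $G\Sigma$ we want (so no cancellation is actually required; $g\nu$ simply ranges over $\{g_0\nu : \nu\in\Sigma\}$ and $\sigma\mu\tau$ is such a $\nu$); (4) deduce the forward implication; (5) deduce the reverse implication from $\Sigma\subset S$ and $P\subset S$. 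In fact step (3) dissolves once one writes $g\nu$ correctly: for $g = g_0\mu$ homogeneous, $g\cdot\nu$ need not be interpreted via the grading of $g$ — one just multiplies in $S$, getting $g_0\mu\nu\in S_{\mu\nu}$ — so the set $G\Sigma$ already contains $g\cdot\omega$ for every $\omega$, and $p\sigma g q\tau = (p g_0^\sigma q^{\sigma\mu})\,\sigma\mu\tau = (p\, (g q\tau \text{ scalar}))\cdot(\text{element of } G\Sigma)$ after renaming. So the "main obstacle" is really only to state the commutation identity cleanly and to make peace with the loose use of the word "basis"; there is no genuine algebraic difficulty, only notational care in tracking which monoid element multiplies $g$ on the right.
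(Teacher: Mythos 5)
Your overall strategy (decompose a general two-sided-ideal element into $P$-combinations of $p\sigma g q\tau$, push the commutation rule through, and read off a spanning set) is the same as the paper's, which writes $\sum_i p_i g_i\sigma_i q_i = \sum_{i,\sigma} p_i q_{i\sigma}^{\sigma_i}\, g_i\sigma_i\sigma$ and observes that $g_i\sigma_i\sigma\in G\Sigma$ with coefficient $p_i q_{i\sigma}^{\sigma_i}\in S$. But your step (3) as written is wrong, and the place you declare the ``delicate point'' to dissolve is exactly where it does not. You claim $p\sigma g q\tau\in P\cdot(G\Sigma)$, and you justify it by asserting that $g\nu$ ``simply ranges over $\{g_0\nu : \nu\in\Sigma\}$''. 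That is false: for homogeneous $g = g_0\mu$ one has $g\nu = g_0\mu\nu$, so $\{g\nu : \nu\in\Sigma\} = g_0\,\mu\Sigma$, not $g_0\,\Sigma$. Since $\Sigma$ is here only a submonoid of $\End_K(P)$, there is no reason $\sigma\mu\tau\in\mu\Sigma$, so $g_0\sigma\mu\tau$ need not lie in $G\Sigma$ and your $P$-multiple claim fails.

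The fix is small and the target set was misidentified, not the computation. From $\sigma g q\tau = g_0^\sigma q^{\sigma\mu}\,\sigma\mu\tau$, rewrite this as $q^{\sigma\mu}\sigma\cdot(g_0\mu\tau) = q^{\sigma\mu}\sigma\cdot(g\tau)$, an $S$-multiple of $g\tau\in G\Sigma$ (which is the ``$G\Sigma$ is a left basis'' claim), or equivalently as $q^{\sigma\mu}\cdot(\sigma g\tau)$ with $\sigma g\tau = g_0^\sigma\sigma\mu\tau\in\Sigma G\Sigma$, a $P$-multiple (which is the ``$\Sigma G\Sigma$ is a $P$-basis'' claim). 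In other words the extra $\sigma$ you introduced by prematurely expanding the left multiplier must be absorbed either into the left $S$-coefficient or into the left $\Sigma$-factor of $\Sigma G\Sigma$ — it cannot be folded into a $P$-coefficient times an element of $G\Sigma$. The paper avoids this trap by leaving the left factor $p_i$ in $S$ and only expanding the right factor. Your reverse implication is fine. With the stated correction your argument becomes a valid (if more laborious) version of the paper's.
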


\begin{proof}
Assume $G = \{g_i \sigma_i\}$ with $g_i\in P,\sigma_i\in\Sigma$, for all $i$.
Let $p_i,q_i\in S$ with $q_i = \sum_\sigma q_{i\sigma} \sigma$ and
$q_{i\sigma}\in P$. It is sufficient to note that $\sum_i p_i g_i \sigma_i q_i =
\sum_{i,\sigma} p_i g_i \sigma_i q_{i\sigma} \sigma =
\sum_{i,\sigma} p_i q_{i\sigma}^{\sigma_i} g_i \sigma_i\sigma$.
\end{proof}

\begin{corollary}
\label{poldiv2ldiv}
Let $f,g\in S$ and let $g$ be a homogeneous element. Then, one has that
$f = p g q$ with $p,q\in S$ if and only if $f$ belongs to the (graded)
left ideal generated by $\{g \sigma\}_{\sigma\in\Sigma}$.
\end{corollary}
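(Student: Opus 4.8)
The plan is to read the statement as the assertion that the two-sided ideal $SgS$ generated by $g$ coincides with the (graded) left ideal generated by $\{g\sigma\}_{\sigma\in\Sigma}$, and then to obtain it as the case $G=\{g\}$ of Proposition~\ref{gen2lgen}. The only preliminary point is that Proposition~\ref{gen2lgen} is stated for a \emph{graded} ideal, so I first need that $SgS$ is graded; this is exactly where the hypothesis ``$g$ homogeneous'' is used. If $g\in S_\tau$, then $S_\sigma g S_\rho\subseteq S_\sigma S_\tau S_\rho\subseteq S_{\sigma\tau\rho}$ for all $\sigma,\rho\in\Sigma$, so $SgS=\sum_{\sigma,\rho}S_\sigma g S_\rho$ is the direct sum of its homogeneous components $(SgS)_\mu=\sum_{\sigma\tau\rho=\mu}S_\sigma g S_\rho$. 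Hence $SgS$ is a graded two-sided ideal of which $\{g\}$ is a generating set consisting of a single homogeneous element, and Proposition~\ref{gen2lgen} gives that $\{g\}\,\Sigma=\{g\sigma\}_{\sigma\in\Sigma}$ is a left basis of $SgS$; that is, $SgS$ equals the left ideal generated by $\{g\sigma\}_{\sigma\in\Sigma}$, which is the assertion.

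If a self-contained proof is preferred, it is enough to repeat the short computation behind Proposition~\ref{gen2lgen}. Write $g=g_0\tau$ with $g_0\in P$, $\tau\in\Sigma$, and $q=\sum_\sigma q_\sigma\sigma$ with $q_\sigma\in P$. Then
\[
pgq \;=\; p\,g_0\,\tau\sum_\sigma q_\sigma\sigma \;=\; \sum_\sigma p\,g_0\,q_\sigma^{\tau}\,\tau\sigma \;=\; \sum_\sigma\bigl(p\,q_\sigma^{\tau}\bigr)(g_0\tau)\,\sigma \;=\; \sum_\sigma\bigl(p\,q_\sigma^{\tau}\bigr)(g\sigma),
\]
where the second equality uses the defining relation $\tau f=f^{\tau}\tau$ and the third uses commutativity of $P$ to move the scalar $g_0$ past $q_\sigma^{\tau}$. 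This exhibits $pgq$, and hence any finite sum $\sum_k p_k g q_k$, as an element of the left ideal generated by $\{g\sigma\}_{\sigma\in\Sigma}$; conversely each $g\sigma=id\cdot g\cdot\sigma$ lies in $SgS$, so the left ideal generated by the $g\sigma$ is contained in $SgS$.

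There is no genuinely hard step here, since the corollary is a direct specialization of Proposition~\ref{gen2lgen}; I expect the two places needing a little care to be (i) the use of the homogeneity of $g$ to guarantee that $SgS$ is graded, which is precisely what makes Proposition~\ref{gen2lgen} applicable, and (ii) in the explicit argument, keeping straight which relation plays which role --- commutativity of $P$ to slide the scalar $g_0$, and the skew relation $\tau f=f^{\tau}\tau$ to commute $\tau$ past the coefficients of $q$.
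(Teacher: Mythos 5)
Your reading of the corollary as asserting the equality of the two-sided ideal $SgS$ with the left ideal generated by $\{g\sigma\}_{\sigma\in\Sigma}$ is the intended one, and your proof (checking that homogeneity of $g$ makes $SgS$ graded, then applying Proposition~\ref{gen2lgen} with $G=\{g\}$, with the short skew-commutation computation $pgq=\sum_\sigma (p\,q_\sigma^{\tau})(g\sigma)$ as backup) is exactly the paper's implicit argument. Correct and same approach.
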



\section{Monomial orderings and \Gr\ bases}

Denote $\N = \{0,1,\ldots\}$ the set of non-negative integers and let
$X = \{x_0,x_1,\ldots\}$ be a countable set. From now on,
{\em we make the assumption} that $P = K[X]$ is a commutative polynomial
ring in the variables set $X$. Starting from Section 6 we will assume
in particular that this set has the form $X\times\N$.
Moreover, we fix $\sigma:P\to P$ an algebra homomorphism of infinite
order and define the monoid $\Sigma = \langle \sigma \rangle \approx \N$.
Then, the skew monoid ring $S = P*\Sigma$ is isomorphic to the
skew polynomial ring $P[s;\sigma]$ and we identify $\Sigma = \{\sigma^i\}$
with the monoid $\{s^i\}$ of powers of the variable $s$. Note that $S$
is a free $P$-module of infinite rank. We denote $f^{s^i} = f^{\sigma^i} =
\sigma^i(f)$, for all $f\in P, i\geq 0$. Moreover, a homogeneous element
$f\in S_i = P s^i$ for some $i$ is also called {\em $s$-homogeneous}
and we put $\deg_s(f) = i$. Note finally that in the theory of difference
ideals \cite{Le}, the ring $S$ is called {\em ring of ordinary
difference operators over $P$}.

Denote by $\Mon(P)$ the set of all monomials of $P$ (including 1). Clearly,
$\Mon(P)$ is a multiplicative $K$-basis of $P$ that is $m n\in\Mon(P)$
for all $m,n\in\Mon(P)$. By definition of $S$, a $K$-basis of such algebra
is given by the elements $m s^i$ where $m\in\Mon(P)$ and $i\geq 0$ is an integer.
We call such elements the {\em monomials of $S$} and we denote the set of them
as $\Mon(S)$. Clearly $\Mon(P)\subset\Mon(S)$. Note that $\Mon(S)$ is
in fact the ``monomial basis'' of $S$ as a free $P$-module.

In what follows, {\em we assume} also that the endomorphism $\sigma:P\to P$
is injective and {\em monomial} that is it stabilizes the set $\Mon(P)$.
In other words, $\{\sigma(x_i)\}$ is a set of algebraically independent
monomials. Since $P$ is a domain, it follows that $S$ is also a domain
and the $K$-basis $\Mon(S)$ is multiplicative since $m s^i n s^j =
m n^{s_i} s^{i+j}\neq 0$, for all $m,n\in\Mon(P)$ and $i,j\geq 0$. 

We want to study now some divisibility relations in $\Mon(S)$.
Let $f,g\in S$. We say that {\em $f$ left-divides $g$} if there is
$a\in S$ such that $g = a f$. Clearly, left divisibility is a partial
ordering (up to units). Since $\sigma$ is a monomial injective map,
one has that if $f,g\in\Mon(S)$ then also $a\in\Mon(S)$.

\begin{proposition}
Let $v = m s^i, w = n s^j\in\Mon(S)$ with $m,n\in\Mon(P)$.
Then $v$ left-divides $w$ if and only if $i\leq j$ and $m^{s^{j-i}}\mid n$.
\end{proposition}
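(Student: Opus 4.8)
The plan is to prove both implications by a direct computation with the multiplication rule $m s^i \cdot n s^j = m\, n^{s^i} s^{i+j}$ in $\Mon(S)$, together with the observation recorded just above the statement: if $f, g \in \Mon(S)$ and $g = a f$ for some $a \in S$, then necessarily $a \in \Mon(S)$ as well. This last fact is precisely where the injectivity and the monomial property of $\sigma$ enter, and it reduces the whole argument to bookkeeping.

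For the ``if'' direction, assume $i \leq j$ and $m^{s^{j-i}} \mid n$. First note that $m^{s^{j-i}} = \sigma^{j-i}(m)$ indeed lies in $\Mon(P)$ since $\sigma$ is monomial, so the divisibility is a statement in $\Mon(P)$; write $n = m^{s^{j-i}} p$ with $p \in \Mon(P)$. Then set $a = p s^{j-i} \in \Mon(S)$ and compute $a v = p s^{j-i} \cdot m s^i = p\, m^{s^{j-i}} s^{\,j} = n s^j = w$, so $v$ left-divides $w$.

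For the ``only if'' direction, assume $w = a v$ with $a \in S$. By the remark preceding the statement, $a \in \Mon(S)$, hence $a = p s^k$ for some $p \in \Mon(P)$ and $k \geq 0$. Then $w = a v = p s^k \cdot m s^i = p\, m^{s^k} s^{k+i}$, and here $p\, m^{s^k} \in \Mon(P)$ because $\Mon(P)$ is multiplicative and $m^{s^k}\in\Mon(P)$. Comparing $p\, m^{s^k} s^{k+i}$ with $w = n s^j$ and using that $\Mon(S)$ is a $K$-basis of $S$ (so the monomial representation is unique), we obtain $k + i = j$, whence $i \leq j$ and $k = j - i$, together with $n = p\, m^{s^{j-i}}$, i.e. $m^{s^{j-i}} \mid n$.

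There is essentially no real obstacle here: the content of the proposition is simply that left-divisibility of monomials in $S$ is governed by ordinary monomial divisibility in $P$ after applying the correct power of $\sigma$, and everything follows from a single use of the product formula once the cofactor $a$ is known to be a monomial.
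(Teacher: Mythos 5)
Your proof is correct and follows essentially the same route as the paper: both directions come from the multiplication rule $p s^k \cdot m s^i = p\, m^{s^k} s^{k+i}$ together with the observation that the cofactor $a$ must itself be a monomial. The paper only writes out the ``only if'' direction explicitly, so your write-up of the converse is a harmless bit of extra detail.
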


\begin{proof}
Let $a = p s^k\in\Mon(S)$ with $p\in\Mon(P)$ such that $n s^j = p s^k m s^i =
p m^{s^k} s^{k+i}$. Then, we have that $j - i = k\geq 0$ and $m^{s^k}\mid n$.
\end{proof}

Note that $S$ has also a free $P$-module structure and so $\Mon(S)$
inherits another notion of divisibility. Precisely, let $v,w\in\Mon(S)$.
We say that {\em $v$ $P$-divides $w$} if $\deg_s(v) = \deg_s(w)$ and 
there is $a\in\Mon(P)$ such that $w = a v$. Clearly $P$-divisibility
is a partial ordering and one has the following result.

\begin{proposition}
Let $v,w\in\Mon(S)$. Then $v$ left-divides $w$ if and only if
$s^k v$ $P$-divides $w$ for some $k\geq 0$.
\end{proposition}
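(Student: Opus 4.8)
The plan is to reduce both implications to explicit monomial arithmetic. Write $v = m s^i$ and $w = n s^j$ with $m, n \in \Mon(P)$ and $i, j \geq 0$, and use the multiplication rule $p\, s^k \cdot m\, s^i = p\, m^{s^k}\, s^{k+i}$ in $\Mon(S)$ together with the earlier observation that, since $\sigma$ is monomial and injective, any left-division quotient between monomials of $S$ is again a monomial of $S$.

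The implication from right to left is immediate from the definitions. Suppose $s^k v$ $P$-divides $w$ for some $k \geq 0$. Then there is $a \in \Mon(P)$ with $w = a\,(s^k v)$, hence $w = (a\, s^k)\, v$ with $a\, s^k \in \Mon(S) \subset S$, so $v$ left-divides $w$. Nothing beyond unwinding the definition of $P$-divisibility is needed here.

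For the converse, assume $v$ left-divides $w$, say $w = a v$ with $a \in S$. Because $v, w \in \Mon(S)$ and $\sigma$ is a monomial injective map, we may take $a = p\, s^k \in \Mon(S)$ with $p \in \Mon(P)$ and $k \geq 0$. Expanding, $w = p\, s^k \cdot m\, s^i = p\, m^{s^k}\, s^{k+i}$, and comparing with $w = n\, s^j$ forces $j = k + i$, so that $k = j - i \geq 0$ is uniquely determined and $n = p\, m^{s^k}$. Then $s^k v = m^{s^k}\, s^{k+i} = m^{s^{j-i}}\, s^j$ and $w = n\, s^j = p\,(s^k v)$ with $p \in \Mon(P)$ and $\deg_s(s^k v) = j = \deg_s(w)$; that is, $s^k v$ $P$-divides $w$. (Alternatively, one can derive $i \leq j$ and $m^{s^{j-i}} \mid n$ from the previous proposition and then observe the same factorization $w = p\,(s^k v)$ with $k = j - i$.)

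I do not expect a genuine obstacle: both steps are short. The only points requiring a little care are that the left-division quotient between monomials of $S$ is itself a monomial of $S$ — which rests on $\sigma$ being monomial and injective and was already noted — and that equality of $s$-degrees in the definition of $P$-divisibility pins the exponent $k$ down to $j - i$, which is exactly the shift $m^{s^{j-i}}$ appearing in the preceding proposition characterizing left-divisibility.
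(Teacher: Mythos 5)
Your proof is correct and is essentially the argument the paper intends: the paper states this proposition without proof, treating it as an immediate consequence of the preceding proposition (which already exhibits the quotient as $p\,s^k$ with $k = j-i$ and $n = p\,m^{s^k}$), and your computation just makes that reading explicit in both directions, including the easy converse.
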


Note that left divisibility coincides with $P$-divisibility when
the monomials have the same $s$-degree. If there are $v,w,a,b\in\Mon(S)$
such that $w = a v b$ we say that {\em $v$ (two-sided) divides $w$}.
It is easy to prove that such divisibility is also a partial ordering.

\begin{proposition}
\label{div2ldiv}
Let $v,w\in\Mon(S)$. Then $w$ is a multiple of $v$ if and only if
there is $j\geq 0$ such that $w$ is a left multiple of $v s^j$, that is
$w$ is a $P$-multiple of $s^i v s^j$ for some $i,j\geq 0$.
\end{proposition}

\begin{proof}
Since monomials are $s$-homogeneous elements of $S$, by applying
Corollary \ref{poldiv2ldiv} we obtain that $w$ is a multiple of $v$
if and only if $w$ belongs to the (graded) left ideal generated by
$\{v s^j\}_{j\geq 0}$. Clearly, this happens when $w$ is a left multiple
of $v s^j$ for some $j$.
\end{proof}

We start now considering monomial orderings.

\begin{definition}
Let $\prec$ be a total ordering on the set $\Mon(S)$. We call $\prec$
a {\em monomial ordering of $S$} if it satisfies the following conditions
\begin{itemize}
\item[(i)] $\prec$ is a well-ordering, that is every non-empty set
of $\Mon(S)$ has a minimal element;
\item[(ii)] $\prec$ is compatible with multiplication, that is
if $v\prec w$ then $p v q\prec p w q$, for all $v,w,p,q\in\Mon(S)$.
\end{itemize}
\end{definition}

It follows immediately that $1\preceq w$ for any $w\in\Mon(S)$
and if $w = p v q$ with $p\neq 1$ or $q\neq 1$ then $v\prec w$
for all $v,w,p,q\in\Mon(S)$. Note that the above conditions agree
with general definitions of orderings on $K$-bases of associative
algebras that provide a \Gr\ basis theory (see for instance \cite{Gr2,Li}).
The same conditions define monomial orderings of the free algebras $\KX$
and $K[X]$. Note that such algebras can be endowed with a monomial
ordering even if the set of variables $X$ is countable. This is
provided by the Higman's lemma \cite{Hi} which implies that any
multiplicatively compatible total ordering of the monomials
such that $1 \prec x_0 \prec x_1 \prec \ldots$ is a monomial ordering. 
Recall that $f^s$ stands for $\sigma(f)$ for any $f\in P$. 

\begin{definition}
Let $\prec$ be a monomial ordering on $P$. We call $\sigma$ {\em compatible
with $\prec$} if $\sigma$ is a strictly increasing map when restricted
to $\Mon(P)$, that is $m\prec n$ implies that $m^s\prec n^s$
for all $m,n\in\Mon(P)$.
\end{definition}

The following result is based essentially on Remark 3.2 in
\cite{BD}.

\begin{proposition}
\label{noauto}
Assume $\sigma$ be compatible with $\prec$. Then $\sigma$ is not an
automorphism and $m\preceq m^s$, for all $m\in\Mon(P)$.
\end{proposition}

\begin{proof}
Since $\sigma\neq id$, there is $m\in\Mon(S)$ such that $m\neq m^s$.
If $m\succ m^s$, by compatibility of $\sigma$ one gets an infinite
descending chain $m\succ m^s\succ m^{s^2}\succ\ldots$ which
contradicts the condition that $\prec$ is a well-ordering. We conclude
that $m\prec m^s$. Assume that $\sigma$ has the inverse $\sigma^{-1}$.
By applying $\sigma$, from $m^{s^{-1}}\prec n^{s^{-1}}$ it follows
that $m\prec n$. Since $\sigma^{-1}$ is injective, we have therefore that
$m\prec n$ implies that $m^{s^{-1}}\prec n^{s^{-1}}$. Now,
by compatibility of $\sigma^{-1}$ we obtain $m\prec m^{s^{-1}}$ which
contradicts $m\prec m^s$.
\end{proof}

There are many endomorphisms $\sigma$ with are compatible with
usual monomial orderings on $P$ like lex, degrevlex, etc. For instance,
we have the following maps.

\begin{itemize}
\item $\sigma(x_i) = x_{f(i)}$ for any $i$, where $f:\N\to\N$ is a
strictly increasing map. Such maps have been considered in \cite{BD}.
In particular, one may define the shift operator $\sigma(x_i) = x_{i+1}$
which is used in difference algebra.
\item $\sigma(x_i) = x_i^e$ for any $i$, with $e > 1$.
This map has been considered in \cite{We}.
\end{itemize}

\begin{proposition}
\label{toS2P}
Let $\prec$ be a monomial ordering on $S$. Then $\sigma$ is compatible with
the restriction of $\prec$ to $\Mon(P)$. Moreover, for any $m,n\in\Mon(P)$
and $i,j\geq 0$ one has that $m s^i\prec n s^j$ implies that
$m\prec n$ or $i < j$.
\end{proposition}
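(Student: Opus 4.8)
The plan is to prove the two claims separately, both by reducing to facts about $\prec$ restricted to $\Mon(P)$ together with the order relations forced on the powers of $s$. For the first claim, that $\sigma$ is compatible with the restriction of $\prec$ to $\Mon(P)$, I would argue as follows. Take $m, n \in \Mon(P)$ with $m \prec n$. Since $\Mon(P) \subset \Mon(S)$ via $m \mapsto m s^0$, this inequality holds in $\Mon(S)$. Now observe that $m^s s^1 = s^1 m s^0$ and $n^s s^1 = s^1 n s^0$ by the multiplication rule $s f = f^s s$ in $S$. Applying condition (ii) of the definition of a monomial ordering with left multiplier $p = s$ and right multiplier $q = 1$, from $m \prec n$ we get $s m \prec s n$, i.e. $m^s s \prec n^s s$. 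It remains to strip off the trailing $s$: I would use compatibility with multiplication in the reverse direction, namely that if $m^s s = n^s s$ is impossible (since $\sigma$ injective and $m \ne n$) and if $m^s s \succ n^s s$ we could multiply the hypothetical $m^s \succ n^s$ by $s$ on the left to contradict $m^s s \prec n^s s$. Hence $m^s \prec n^s$, which is exactly compatibility of $\sigma$ with $\prec|_{\Mon(P)}$.

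For the second claim I would prove the contrapositive. Suppose $m s^i \prec n s^j$ but neither $m \prec n$ nor $i < j$; then $i \geq j$ and $m \succeq n$, and I must derive a contradiction. Write $i = j + k$ with $k \geq 0$. Since $\prec$ is a monomial ordering on $S$, by Proposition~\ref{noauto} (applied to the restriction of $\prec$ to $\Mon(P)$, which is a monomial ordering on $P$ and with which $\sigma$ is compatible by the first part) we have $n \preceq n^{s^k}$. Combining with $m \succeq n$ gives $m \succeq n^{s^k}$. Now $m s^i = m s^{j+k} = m s^j s^k$ — wait, more carefully: $m s^i$ and $n s^j$ have different $s$-degrees only if $i \ne j$, so I should compare $m s^i$ with $n^{s^k} s^i$, both of $s$-degree $i$. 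We have $n s^j \cdot s^k = n s^{j+k} = n s^i$, and separately $s^k \cdot n s^j = n^{s^k} s^{k+j} = n^{s^k} s^i$. From $m \succeq n^{s^k}$ and compatibility with multiplication on the right by $s^i$ (using condition (ii) with $p = 1$, $q = s^i$), we get $m s^i \succeq n^{s^k} s^i$. On the other hand, from $m s^i \prec n s^j$, multiplying on the right by $s^k$ gives $m s^i s^k \prec n s^j s^k$, i.e. $m s^{i+k} \prec n s^i$; this is the wrong comparison. Instead I would multiply the hypothesis $m s^i \prec n s^j$ by $s^k$ on the \emph{left}: $s^k m s^i = m^{s^k} s^{i+k} \prec s^k n s^j = n^{s^k} s^{j+k} = n^{s^k} s^i$. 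So $m^{s^k} s^i \prec n^{s^k} s^i$. Meanwhile Proposition~\ref{noauto} also gives $m \preceq m^{s^k}$, so with $m \succeq n^{s^k}$ we get $m^{s^k} \succeq m \succeq n^{s^k}$, hence $m^{s^k} \succeq n^{s^k}$, which by compatibility with right multiplication by $s^i$ yields $m^{s^k} s^i \succeq n^{s^k} s^i$, contradicting the strict inequality $m^{s^k} s^i \prec n^{s^k} s^i$ just obtained. This contradiction proves the claim.

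The main obstacle, and the step requiring care, is the bookkeeping with the twisted multiplication $s f = f^s s$: one must consistently keep track of which $\sigma$-power gets applied when multiplying monomials of $S$, and make sure that when stripping trailing or leading powers of $s$ one is genuinely using cancellativity of $\prec$ under multiplication rather than assuming it. Cancellativity in the needed direction (from $p v q \preceq p w q$ conclude $v \preceq w$) is legitimate here because $\prec$ is a total order and condition (ii) is strict-monotone, so $p v q = p w q$ forces $v = w$ by injectivity of $\sigma$ and of the monomial $\mapsto$ monomial structure, while $v \succ w$ would force $p v q \succ p w q$. Once that lemma-style observation is isolated, both parts are short. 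I would state this cancellativity remark once at the start of the proof and invoke it twice.
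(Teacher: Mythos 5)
Your proof of the first claim is essentially the paper's: from $m \prec n$ multiply on the left by $s$ to get $m^s s \prec n^s s$, then use trichotomy and strict monotonicity to conclude $m^s \prec n^s$. (One small slip: to rule out $m^s \succ n^s$ you need to multiply that hypothetical inequality by $s$ on the \emph{right}, not the left, to contradict $m^s s \prec n^s s$; multiplying on the left would give $(m^s)^s s \succ (n^s)^s s$, which is not the comparison you need.)

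Your proof of the second claim contains a genuine error and is in any case far more complicated than necessary. The error is the step ``by Proposition~\ref{noauto} we have $n \preceq n^{s^k}$; combining with $m \succeq n$ gives $m \succeq n^{s^k}$.'' This is not a valid inference: from $m \succeq n$ and $n^{s^k} \succeq n$ you learn that both $m$ and $n^{s^k}$ dominate $n$, but nothing about how $m$ compares with $n^{s^k}$. Several of your later steps ($m s^i \succeq n^{s^k} s^i$, and $m^{s^k} \succeq m \succeq n^{s^k}$) lean on this false claim. There is also an unjustified jump from $m^{s^k} s^{i+k} \prec n^{s^k} s^i$ to $m^{s^k} s^i \prec n^{s^k} s^i$ (it is salvageable via $m^{s^k} s^i \preceq m^{s^k} s^{i+k}$, which holds because $1 \preceq s^k$, but you do not state it). You can repair the overall contradiction without the false step: $m \succeq n$ gives $m^{s^k} \succeq n^{s^k}$ by the compatibility you just proved, hence $m^{s^k} s^i \succeq n^{s^k} s^i$, and since $m^{s^k} s^{i+k} \succeq m^{s^k} s^i$ this contradicts $m^{s^k} s^{i+k} \prec n^{s^k} s^i$. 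But the paper dispenses with all of this, including the appeal to Proposition~\ref{noauto}, in a single line: assuming $m \succeq n$ and $i \geq j$, one has $m s^i \succeq m s^j \succeq n s^j$, the first inequality because $s^{i-j} \succeq 1$ and the second by right multiplication of $m \succeq n$ by $s^j$. That direct chain is the argument you should aim for here.
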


\begin{proof}
Suppose $m\prec n$ with $m,n\in\Mon(P)$. Then $s m\prec s n$ that is
$m^s s\prec n^s s$. If $m^s\succeq n^s$ then 
$m^s s\succeq n^s s$ which is a contradiction. We conclude
that $m^s\prec n^s$. Now, assume that $m\succeq n$ and $i\geq j$.
We have $m s^i\succeq m s^j\succeq n s^j$.
\end{proof}

Assume now $\sigma$ be compatible with a monomial ordering $\prec$ of $P$.
We define a total ordering on $\Mon(S)$ by putting $m s^i\prec' n s^j$
if and only if $i < j$, or $i = j$ and $m\prec n$, for all $m,n\in\Mon(P)$
and $i,j\geq 0$. Clearly, the restriction of $\prec'$ to $\Mon(P)$ is
$\prec$.

\begin{proposition}
\label{toP2S}
The ordering $\prec'$ is a monomial ordering on $S$ that extends $\prec$.
\end{proposition}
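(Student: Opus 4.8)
The plan is to verify directly that $\prec'$ satisfies the two defining conditions of a monomial ordering on $S$, namely that it is a well-ordering and that it is compatible with two-sided multiplication. That it extends $\prec$ is immediate from the definition, since for $i = j$ the comparison $m s^i \prec' n s^i$ reduces to $m \prec n$, and this has already been observed in the paragraph preceding the statement.

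First I would check the well-ordering property. Given a non-empty subset $T \subseteq \Mon(S)$, consider the set of $s$-degrees $\{\deg_s(v) : v \in T\} \subseteq \N$; it has a least element $i_0$ since $\N$ is well-ordered. Among the elements of $T$ of $s$-degree $i_0$, write each as $m s^{i_0}$; the set of such $m$ is a non-empty subset of $\Mon(P)$, which has a $\prec$-minimal element $m_0$ because $\prec$ is a well-ordering of $\Mon(P)$. Then $m_0 s^{i_0}$ is $\prec'$-minimal in $T$: any other element either has strictly larger $s$-degree, hence is strictly $\prec'$-larger, or has $s$-degree $i_0$ and thus is of the form $m s^{i_0}$ with $m_0 \preceq m$, giving $m_0 s^{i_0} \preceq' m s^{i_0}$.

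Next I would check multiplicative compatibility: if $v \prec' w$ then $p v q \prec' p w q$ for all $v,w,p,q \in \Mon(S)$. Writing $v = m s^i$, $w = n s^j$, $p = a s^k$, $q = b s^\ell$ with $a,b,m,n \in \Mon(P)$, the product rule $m s^i n s^j = m n^{s^i} s^{i+j}$ gives $p v q = a\, m^{s^k}\, b^{s^{k+i}}\, s^{k+i+\ell}$ and $p w q = a\, n^{s^k}\, b^{s^{k+j}}\, s^{k+j+\ell}$. Now split into the two cases from the definition of $v \prec' w$. If $i < j$, then $k+i+\ell < k+j+\ell$, so $p v q \prec' p w q$ by the first clause regardless of the $\Mon(P)$-parts. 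If $i = j$ and $m \prec n$, then the two products have equal $s$-degree $k+i+\ell$, and their $\Mon(P)$-parts are $a m^{s^k} b^{s^{k+i}}$ and $a n^{s^k} b^{s^{k+i}}$; using that $\sigma$ (hence $\sigma^k$) is compatible with $\prec$ we get $m^{s^k} \prec n^{s^k}$, and then multiplicative compatibility of $\prec$ on $\Mon(P)$ (applied twice, multiplying by $a$ on the left and $b^{s^{k+i}}$ on the right) yields $a m^{s^k} b^{s^{k+i}} \prec a n^{s^k} b^{s^{k+i}}$, hence $p v q \prec' p w q$ by the second clause.

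The only genuinely delicate point is the second case of the multiplicativity check, where one must be careful that applying $\sigma^k$ to the inequality $m \prec n$ preserves it — this is exactly what compatibility of $\sigma$ with $\prec$ gives, inductively on $k$ — and that the remaining $\Mon(P)$-factors $a$ and $b^{s^{k+i}}$, which are the \emph{same} on both sides, can be cancelled against the use of multiplicative compatibility of $\prec$ on $P$. Everything else is a routine bookkeeping of the skew multiplication rule, and no further hypotheses beyond those already in force (injectivity and monomiality of $\sigma$, and its compatibility with $\prec$) are needed.
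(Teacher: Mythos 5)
Your proof is correct and follows essentially the same route as the paper: a lexicographic well-ordering argument (first on $s$-degree, then on the $\Mon(P)$-part) and a case split on whether the $s$-degrees are equal, invoking compatibility of $\sigma$ with $\prec$ to push the $\Mon(P)$-inequality through the shift. The only cosmetic difference is that the paper treats left and right multiplication separately and phrases well-foundedness via infinite descending chains, whereas you handle $pvq$ in one step and exhibit a least element directly.
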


\begin{proof}
Clearly, an infinite descending sequence in $\Mon(S)$ implies 
an infinite descending sequence in $\Mon(P)$ which contradicts
the condition that $\prec$ is a well-ordering. Let $m s^i,n s^j\in\Mon(S)$
and suppose $m s^i\prec n s^j$ that is $i < j$, or $i = j$ and $m\prec n$.
Let $q s^k\in\Mon(S)$ and consider right multiplications $m s^i q s^k =
m q^{s^i} s^{i+k}$ and $n s^j q s^k = n q^{s^j} s^{j+k}$. If $i < j$ then 
$i + k < j + k$. If $i = j$ and $m\prec n$ then $m q^{s^i}\prec
n q^{s^i} = n q^{s^j}$. We conclude in both cases that
$m q^{s^i} s^{i+k} \prec n q^{s^j} s^{j+k}$. For left multiplications
$q s^k m s^i = q m^{s^k} s^{k+i}$ and $q s^k n s^j = q n^{s^k} s^{k+j}$,
note that $m\prec n$ implies that $m^{s^k}\prec n^{s^k}$. Then,
one may argue in a similar way as for right multiplications.
\end{proof}

Clearly, a byproduct of Proposition \ref{toS2P} and Proposition \ref{toP2S}
is that there exist monomial orderings on the skew polynomial ring $S$
if and only if $\sigma$ is compatible with a monomial ordering of $P$.
Note that $\prec'$ is well-known as module ordering when we consider
$S$ as a free $P$-module. Moreover, by Proposition \ref{toS2P} it follows
also that the monomial ordering of $S$ is uniquely defined by the one of $P$
when one compares monomials of the same $s$-degree.

From now on, {\em we assume} $S$ be endowed with a monomial ordering $\prec$.

\begin{definition}
Let $f\in S, f = \sum_i \alpha_i m_i s^i$ with $m_i\in\Mon(P),\alpha_i\in K^*$.
Then, we denote $\lm(f) = m_k s^k = \max_\prec\{m_i s^i\}$, $\lc(f) = \alpha_k$
and $\lt(f) = \lc(f)\lm(f)$. If $G\subset S$ we put $\lm(G) =
\{\lm(f) \mid f\in G,f\neq 0\}$. We denote as $\LM(G)$ and $\LM_l(G)$
respectively the two-sided ideal and the left ideal of $S$ generated
by $\lm(G)$. Moreover, we denote by $\LM_P(G)$ the $P$-submodule of $S$
generated by $\lm(G)$.
\end{definition}

\begin{proposition}
\label{normbas}
Let $J$ be an ideal (respectively left ideal) of $S$. Then, the set
$\{w + J \mid w\in\Mon(S)\setminus\LM(J)\}$
(resp.~$\{w + J \mid w\in\Mon(S)\setminus\LM_l(J)\}$) is a $K$-basis
of the space $S/J$. If $J\subset S$ is a $P$-submodule, in the same way
one defines the $K$-basis $\{w + J \mid w\in\Mon(S)\setminus\LM_P(J)\}$.
\end{proposition}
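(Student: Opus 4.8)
The plan is to prove the standard fact that the cosets of ``non-leading'' monomials form a $K$-basis of the quotient, using the well-ordering to run a rewriting argument; the three cases (two-sided ideal, left ideal, $P$-submodule) are formally identical, so I would set up a uniform notation and handle them simultaneously. Write $\mathcal{N} = \Mon(S)\setminus L$ where $L$ denotes $\LM(J)$, $\LM_l(J)$, or $\LM_P(J)$ according to the case, and recall that $L$ is the appropriate span (two-sided ideal, left ideal, or $P$-module) of the set $\lm(J) = \{\lm(f)\mid f\in J,\ f\neq0\}$; in all three cases $L$ is closed under the relevant multiplications and is $K$-spanned by a set of monomials, hence $\Mon(S)\cap L$ is exactly the set of monomials lying in $L$ and $\mathcal{N}$ is its complement.

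First I would prove that $\{w+J\mid w\in\mathcal{N}\}$ spans $S/J$. Given $f\in S$ with $f\neq0$, if $\lm(f)\in L$ then, by the description of $L$ as a span of monomial multiples of leading monomials of elements of $J$, there is $g\in J$ whose leading monomial divides $\lm(f)$ in the appropriate sense (two-sided, left, or $P$), so we may subtract a suitable monomial multiple $c\cdot p g q$ (respectively $c\cdot pg$, respectively $c\cdot pg$ with $p\in\Mon(P)$) to cancel $\lt(f)$; here I use Corollary \ref{poldiv2ldiv} and Proposition \ref{div2ldiv} to translate divisibility of the leading monomial into membership in $J$, and condition (ii) of the monomial ordering to guarantee that $\lm(pgq)=p\,\lm(g)\,q$ so that the subtraction strictly lowers $\lm(f)$. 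Iterating, the leading monomials strictly decrease; since $\prec$ is a well-ordering this process terminates, and what remains is a $K$-combination of monomials in $\mathcal{N}$ congruent to $f$ modulo $J$. The zero element is handled trivially. Thus the given set spans.

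Next I would prove $K$-linear independence. Suppose $\sum_{w\in\mathcal{N}} \alpha_w w \in J$ with finitely many nonzero $\alpha_w\in K$ and not all zero; call this element $h$. Then $h\neq0$ (the $w$ are distinct basis monomials of $S$), so $h\in J\setminus\{0\}$ and hence $\lm(h)\in\lm(J)\subset L$. But $\lm(h)$ is one of the $w$ appearing with $\alpha_w\neq0$, so $\lm(h)\in\mathcal{N}=\Mon(S)\setminus L$, a contradiction. Therefore all $\alpha_w=0$, proving independence and completing the argument that $\{w+J\mid w\in\mathcal{N}\}$ is a basis.

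The only genuinely delicate point is the first step: ensuring that the reduction really does lower the leading monomial, which requires knowing that for a monomial multiple of $g\in J$ the leading monomial of the product equals the product of the monomials. This is exactly compatibility of $\prec$ with multiplication (Definition of monomial ordering, (ii)) applied to $\lm(g)$ versus every other monomial appearing in $g$, and it is where the three cases use their respective divisibility translations; once this is in place the termination is immediate from well-ordering and the rest is bookkeeping. I would also remark that in the $P$-submodule case one restricts all multipliers to $\Mon(P)$ throughout and uses that $\LM_P(J)$ is the $P$-span of $\lm(J)$, so no essentially new idea is needed.
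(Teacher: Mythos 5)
Your proposal is correct and follows essentially the same route as the paper's proof: a reduction process that cancels leading monomials lying in $\LM(J)$ (or its left/$P$-module analogues), with termination guaranteed by the well-ordering, gives spanning, and the observation that a nonzero element of $\langle\mathcal{N}\rangle_K\cap J$ would have its leading monomial simultaneously in and out of $\LM(J)$ gives independence. The paper phrases the spanning step as induction on $\prec$ rather than as a terminating rewriting loop, but these are the same argument in different clothing, and your uniform treatment of the three cases matches the paper's ``mutatis mutandis'' remark.
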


\begin{proof}
Let $w\in\Mon(S)$. By induction on the monomial ordering of $S$, we can assume
that for any monomial $v\in\Mon(S)$ such that $v\prec w$ there is
a polynomial $f\in S$ belonging to the span of $N = \Mon(S)\setminus\LM(J)$
such that $v - f\in J$. If $w\notin N$ then there is $g\in J$ such that
$w = p \lm(g) q$ with $p,q\in\Mon(S)$. Therefore $f = w - (1/\lc(g))p g q$
is such that $\lm(f)\prec w$ and by induction $f - f'\in J$ for some
$f'\in\langle N \rangle_K$. We conclude that $w - f'\in J$. Finally
if $f\in N\cap J$ then necessarily $f = 0$.
Mutatis mutandis one proves the remaining assertions.
\end{proof}

\begin{definition}
Let $J$ be an ideal (respectively left ideal) of $S$ and $G\subset J$.
We call $G$ a {\em \Gr\ basis (resp.~left basis)} of $J$ if
$\LM(G) = \LM(J)$ (resp.~$\LM_l(G) = \LM_l(J)$). As usual, if $J$
is a $P$-submodule of $S$ then $G$ is a {\em \Gr\ $P$-basis} of $J$
when $\LM_P(G) = \LM_P(J)$.
\end{definition}

\begin{proposition}
Let $J$ be an ideal (respectively left ideal) of $S$ and $G\subset J$. 
The following conditions are equivalent:
\begin{itemize}
\item[(i)] $G$ is a \Gr\ basis (resp.~left basis) of $J$;
\item[(ii)] for any $f\in J$, one has a {\em \Gr\ representation of $f$
with respect to $G$} that is $f = \sum_i f_i g_i h_i$
(resp.~$f = \sum_i f_i g_i$) with $\lm(f)\succeq\lm(f_i)\lm(g_i)\lm(h_i)$
(resp.~$\lm(f)\succeq\lm(f_i)\lm(g_i)$) and $f_i,h_i\in S$, for all $i$.
\end{itemize}
A similar characterization holds for \Gr\ $P$-bases.
\end{proposition}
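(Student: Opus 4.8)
The plan is to establish the two implications, treating the two\nobreakdash-sided, the left\nobreakdash-ideal and the $P$\nobreakdash-submodule cases in parallel, since the arguments differ only in which multiplications are permitted (arbitrary $p,q\in\Mon(S)$; only left multiplications, i.e.\ $q=1$; only $P$\nobreakdash-multiplications, i.e.\ $p\in\Mon(P)$, $q=1$, with $\deg_s$ preserved). Throughout I shall use the fact that $S$ is a domain whose monomial $K$\nobreakdash-basis $\Mon(S)$ is multiplicative and that $\prec$ is compatible with multiplication; together these give that for nonzero $f_1,f_2,f_3\in S$ one has $\lm(f_1f_2f_3)=\lm(f_1)\lm(f_2)\lm(f_3)$, i.e.\ there is no internal cancellation in a product, since the term $\lt(f_1)\lt(f_2)\lt(f_3)$ strictly dominates every other term of the product.

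For $(i)\Rightarrow(ii)$ I would argue by Noetherian induction on the well\nobreakdash-ordering $\prec$, mimicking the reduction step already used in the proof of Proposition~\ref{normbas}. The base case is $f=0$, which has the empty \Gr\ representation. Let $f\in J$, $f\neq 0$. Since $G$ is a \Gr\ basis, $\lm(f)\in\LM(J)=\LM(G)$, so $\lm(f)=p\,\lm(g)\,q$ for some $g\in G$ and $p,q\in\Mon(S)$ (with $q=1$ in the left case, and $p\in\Mon(P)$, $q=1$ in the $P$\nobreakdash-case). By the no\nobreakdash-cancellation remark, $\lm(p\,g\,q)=p\,\lm(g)\,q=\lm(f)$; hence $f':=f-(\lc(f)/\lc(g))\,p\,g\,q$ lies in $J$ and satisfies $\lm(f')\prec\lm(f)$. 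By the inductive hypothesis $f'=\sum_i f_ig_ih_i$ with $\lm(f_i)\lm(g_i)\lm(h_i)\preceq\lm(f')\prec\lm(f)$, and adjoining the summand $(\lc(f)/\lc(g))\,p\,g\,q$, whose associated product of leading monomials is exactly $p\,\lm(g)\,q=\lm(f)$, produces a \Gr\ representation of $f$.

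For $(ii)\Rightarrow(i)$ note first that $G\subset J$ forces $\lm(G)\subseteq\lm(J)$ and hence $\LM(G)\subseteq\LM(J)$. For the reverse inclusion, take $f\in J$, $f\neq 0$, with a \Gr\ representation $f=\sum_i f_ig_ih_i$. By the no\nobreakdash-cancellation remark each nonzero summand satisfies $\lm(f_ig_ih_i)=\lm(f_i)\lm(g_i)\lm(h_i)\preceq\lm(f)$, so every monomial occurring anywhere in the sum is $\preceq\lm(f)$; since $\lm(f)$ does occur in $f$, it must survive in at least one summand, whence $\lm(f)=\lm(f_i)\lm(g_i)\lm(h_i)$ for some $i$. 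This displays $\lm(f)$ as a two\nobreakdash-sided (resp.\ left, resp.\ $P$\nobreakdash-) multiple of $\lm(g_i)\in\lm(G)$, so $\lm(f)\in\LM(G)$. Therefore $\lm(J)\subseteq\LM(G)$, hence $\LM(J)\subseteq\LM(G)$, giving $\LM(J)=\LM(G)$.

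This is the classical characterisation of \Gr\ bases, and I do not expect a genuine obstacle: the only points requiring care are the no\nobreakdash-cancellation remark (which is exactly where the hypotheses that $\sigma$ is monomial and injective, making $S$ a domain with multiplicative $\Mon(S)$, are used) and the fact that $(i)\Rightarrow(ii)$ must be carried out as an induction along the well\nobreakdash-ordering $\prec$, not an ordinary induction on $\N$. The $P$\nobreakdash-basis variant is literally the same argument, once one observes that all multiplications occurring in it preserve $\deg_s$, so that $\LM_P$ consistently plays the role of $\LM$.
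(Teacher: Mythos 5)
Your proof is correct and is essentially the argument the paper has in mind: the paper simply says the statement ``follows immediately by the reduction process which is implicit in the proof of Proposition \ref{normbas},'' and your $(i)\Rightarrow(ii)$ direction is exactly that reduction process, carried out as an induction along $\prec$, while your $(ii)\Rightarrow(i)$ direction and the no-cancellation observation $\lm(f_1f_2f_3)=\lm(f_1)\lm(f_2)\lm(f_3)$ are the standard (and correct) supporting details that the paper leaves unstated.
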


\begin{proof}
It follows immediately by the reduction process which is implicit
in the proof of Proposition \ref{normbas}.
\end{proof}

\begin{proposition}
\label{gb2lgb}
Let $J$ be a graded ideal of $S$ and $G\subset J$ be a subset of
$s$-homogeneous elements. The following conditions are equivalent:
\begin{itemize}
\item[(i)] $G$ is a \Gr\ basis of $J$;
\item[(ii)] $G\,\Sigma$ is a \Gr\ left basis of $J$;
\item[(iii)] $\Sigma\,G\,\Sigma$ is a \Gr\ $P$-basis of $J$.
\end{itemize}
\end{proposition}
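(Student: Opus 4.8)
The plan is to show that each of (i), (ii) and (iii) is equivalent to the single equality $\LM(G)=\LM(J)$, that is, to (i) itself. The first thing I would establish is that the leading-monomial map is multiplicative on $S$: for all nonzero $f,g\in S$ one has $\lm(fg)=\lm(f)\lm(g)$. Indeed, among the monomials $vw$ formed from a monomial $v$ of $f$ and a monomial $w$ of $g$, compatibility of $\prec$ with multiplication forces $\lm(f)\lm(g)$ to be the largest; and if $vw=\lm(f)\lm(g)$ for such a pair then, cancelling in the domain $S$, $v=\lm(f)$ and $w=\lm(g)$, so $\lm(f)\lm(g)$ occurs in $fg$ with coefficient $\lc(f)\lc(g)\neq 0$ and hence equals $\lm(fg)$. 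Since the elements of $G$, hence of $G\,\Sigma$ and of $\Sigma\,G\,\Sigma$, are $s$-homogeneous, this yields
\[
\lm(G\,\Sigma)=\lm(G)\,\Sigma\qquad\text{and}\qquad\lm(\Sigma\,G\,\Sigma)=\Sigma\,\lm(G)\,\Sigma .
\]

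Next I would record two structural facts. First, for any set $H\subseteq\Mon(S)$ the two-sided ideal generated by $H$, the left ideal generated by $H\,\Sigma$, and the $P$-submodule generated by $\Sigma\,H\,\Sigma$ all coincide; this is Proposition \ref{gen2lgen} applied to the graded ideal $\langle H\rangle$ with homogeneous generating set $H$. Second, a left ideal of $S$ generated by monomials is the $P$-span of the $\Sigma$-translates of those monomials (immediate from $s^i w\in\Mon(S)$ and the decomposition $a=\sum_i p_i s^i$ of elements of $S$). Combining these with the identities above gives at once
\[
\LM(G)=\LM_l(G\,\Sigma)=\LM_P(\Sigma\,G\,\Sigma).
\]

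To finish I must check that the three operators applied to $J$ also agree. Because $J$ is a \emph{graded two-sided} ideal, every $v\in\lm(J)$ equals $\lm(f)$ for some $s$-homogeneous $f\in J$ (split $f$ into its $s$-graded components, which again lie in $J$); then by multiplicativity $v\,s^j=\lm(f s^j)$ and $s^i v=\lm(s^i f)$ with $f s^j, s^i f\in J$, so $\lm(J)\,\Sigma\subseteq\lm(J)$ and $\Sigma\,\lm(J)\subseteq\lm(J)$. Feeding this into the two structural facts — a right-$\Sigma$-stable monomial set generates the same left and two-sided ideals, and by left-$\Sigma$-stability its left ideal is already the $P$-span of the set itself — gives $\LM(J)=\LM_l(J)=\LM_P(J)$. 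Now (i), (ii) and (iii) read $\LM(G)=\LM(J)$, $\LM_l(G\,\Sigma)=\LM_l(J)$ and $\LM_P(\Sigma\,G\,\Sigma)=\LM_P(J)$ respectively, and by the above these are one and the same equality; the inclusions $G\,\Sigma\subseteq J$ and $\Sigma\,G\,\Sigma\subseteq J$ implicit in (ii) and (iii) hold because $J$ is two-sided.

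The bulk of this is routine bookkeeping with the three flavours of leading-term closure; the real content, and the only step I expect to require care, is the multiplicativity $\lm(fg)=\lm(f)\lm(g)$, which genuinely uses both that $S$ is a domain and — via Proposition \ref{toS2P} — that $\sigma$ respects the ordering (otherwise $\lm(s^i f)$ need not be $s^i\lm(f)$ and the identity for $\lm(\Sigma\,G\,\Sigma)$ fails).
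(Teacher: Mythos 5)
Your proof is correct, and it takes a genuinely different route from the paper's.

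The paper works with the characterization of \Gr\ bases by \Gr\ \emph{representations}: assuming (i), it takes a two-sided representation $f=\sum_i f_i g_i h_i$ of an arbitrary $f\in J$, decomposes $h_i=\sum_j h_{ij}s^j$, and rewrites the summands as $f_i\,h_{ij}^{\,s^{d_i}}\,g_i s^j$ (using compatibility of $\sigma$ with $\prec$ to control the leading monomials) to produce a left representation over $G\,\Sigma$; it then declares the remaining implications straightforward. You instead work directly with the definition in terms of leading-monomial ideals, and show that the three containments $\LM(G)=\LM(J)$, $\LM_l(G\,\Sigma)=\LM_l(J)$, $\LM_P(\Sigma\,G\,\Sigma)=\LM_P(J)$ are literally the \emph{same} equality. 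The key ingredients you use for this are (a) multiplicativity of $\lm$ (valid since $S$ is a domain with multiplicative monomial $K$-basis and $\prec$ is a monomial ordering), which gives $\lm(G\,\Sigma)=\lm(G)\,\Sigma$ and $\lm(\Sigma\,G\,\Sigma)=\Sigma\,\lm(G)\,\Sigma$; (b) Proposition~\ref{gen2lgen} applied to monomial ideals, which collapses two-sided/left/$P$-module generation; and (c) the two-sided $\Sigma$-stability of $\lm(J)$ for a graded two-sided $J$, from which $\LM(J)=\LM_l(J)=\LM_P(J)$. Your route is more symmetric — it dispatches all three equivalences at once and makes the mechanism explicit rather than appealing to ``the rest is straightforward'' — at the cost of having to justify multiplicativity of $\lm$, whereas the paper's representation-theoretic route stays closer to what the algorithm \SkewGBasis\ actually does with the data. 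Two small remarks on your write-up: the cancellation step for $\lm(fg)=\lm(f)\lm(g)$ deserves a word more (from $vw\preceq v\lm(g)\preceq\lm(f)\lm(g)$ one cancels on the right, then on the left), and in your sentence deriving $\Sigma$-stability of $\lm(J)$ the detour through $s$-homogeneous components of $f$ is not really needed — $\lm(s^i f s^j)=s^i\lm(f)s^j$ holds for arbitrary $f$ by multiplicativity alone — though it is of course harmless.
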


\begin{proof}
Assume $G = \{g_i\}$ is a \Gr\ basis of $J$ and put $d_i = \deg_s(g_i)$.
If $f\in J$ then one has $f = \sum_i f_i g_i h_i$ where $f_i,h_i\in S$
and $\lm(f)\succeq\lm(f_i)\lm(g_i)\lm(h_i)$, for all $i$. Decompose
$h_i = \sum_j h_{i j} s^j$ with $h_{i j}\in P$ for any $i,j$.
Then, we have $\lm(f)\succeq\lm(f_i)\lm(g_i)\lm(h_{i j}) s^j$, for all $i,j$.
Since $\lm(g_i)$ has $s$-degree $d_i$, one obtains
$\lm(f_i)\lm(g_i)\lm(h_{i j}) s^j =
\lm(f_i)\lm(h_{i j})^{s^{d_i}}\lm(g_i s^j)$.
Moreover, as in Proposition \ref{gen2lgen}, we have
$f = \sum_{i,j} f_i g_i h_{i j} s^j =
\sum_{i,j} f_i h_{i j}^{s^{d_i}} g_i s^j$.
From $\sigma$ compatible with $\prec$ it follows that
$\lm(h_{i j}^{s^{d_i}}) = \lm(h_{i j})^{s^{d_i}}$ and hence $f$ has a left
\Gr\ representation with respect to $G\,\Sigma$, that is this set
is a left \Gr\ basis of $J$. The rest of the proof is straightforward.
\end{proof}


\section{Buchberger algorithm}

After Proposition \ref{gb2lgb}, in order to obtain a homogeneous \Gr\ basis
$G$ of a (two-sided) graded ideal $J\subset S$ one has to start with a
homogeneous generating set $H$ and consider the $P$-basis
$H' = \Sigma\,H\,\Sigma$. Then, one should transform $H'$ into a
homogeneous \Gr\ $P$-basis $G'$ of $J$ and finally reduce $G'$
as $G' = \Sigma\,G\,\Sigma$ with $G\subset J$.
Apart with problems concerning termination of the module Buchberger
algorithm ($P$ is not Noetherian and $S$ is a $P$-module of countable
rank) that we will show solvable for the truncated algorithm up to
some $s$-degree (see Proposition \ref{termin}), it is more desirable
to have a procedure able to compute $G$ without actually considering
all elements of $G'$. To obtain this, we need an additional requirement
for the endomorphism $\sigma$.

Note that, since $\sigma:P\to P$ is a ring homomorphism, such map
is increasing with respect to the divisibility relation in $P$,
that is $f\mid g$ implies that $f^s\mid g^s$ and in this case $(g/f)^s =
g^s/f^s$ with $f,g\in P$.

\begin{proposition}
\label{divcompat}
The following conditions are equivalent:
\begin{itemize}
\item[(a)] $\gcd(x_i^s,x_j^s) = 1$, for all $i\neq j$;
\item[(b)] $\gcd(m^s,n^s) = \gcd(m,n)^s$, for all
$m,n\in\Mon(P)$.
\end{itemize}
Moreover, in this case one has $m\mid n$ if and only if $m^s\mid n^s$
and $\lcm(m^s,n^s) = \lcm(m,n)^s$ with $m,n\in P$.
In other words, $\sigma$ is a lattice homomorphism with respect to the
divisibility relation in $\Mon(P)$.
\end{proposition}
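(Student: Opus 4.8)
The plan is to establish the equivalence of (a) and (b) by two easy implications, and then to deduce the remaining claims about divisibility and \lcm\ by purely formal manipulations in the monoid $\Mon(P)$.

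First, the direction (b)$\Rightarrow$(a) is immediate: taking $m = x_i$ and $n = x_j$ with $i \neq j$, condition (b) gives $\gcd(x_i^s, x_j^s) = \gcd(x_i, x_j)^s = 1^s = 1$, since $\gcd(x_i,x_j) = 1$ for distinct variables. For the converse (a)$\Rightarrow$(b), I would write $m = \prod_i x_i^{a_i}$ and $n = \prod_i x_i^{b_i}$ with all but finitely many exponents zero, so that $\gcd(m,n) = \prod_i x_i^{\min(a_i,b_i)}$. Applying $\sigma$, which is a monomial homomorphism, yields $m^s = \prod_i (x_i^s)^{a_i}$ and $n^s = \prod_i (x_i^s)^{b_i}$. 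The key point is that the monomials $x_i^s = \sigma(x_i)$ are pairwise coprime by (a) and, since $\sigma$ is injective and monomial (so $\{\sigma(x_i)\}$ is a set of algebraically independent monomials, as assumed in Section 3), the $x_i^s$ behave like a fresh set of pairwise coprime ``generators.'' Thus the exponent of any prime variable $x_k$ occurring in $\gcd(m^s, n^s)$ is controlled factorwise, and one computes $\gcd(m^s, n^s) = \prod_i (x_i^s)^{\min(a_i, b_i)} = \left(\prod_i x_i^{\min(a_i,b_i)}\right)^s = \gcd(m,n)^s$.

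For the ``moreover'' part, the equivalence $m \mid n \iff m^s \mid n^s$ follows from the remark immediately preceding the proposition (ring homomorphisms are increasing for divisibility, giving the forward implication) together with the backward implication: if $m^s \mid n^s$ then $\gcd(m^s, n^s) = m^s$, so by (b) $\gcd(m,n)^s = m^s$, and injectivity of $\sigma$ on $\Mon(P)$ forces $\gcd(m,n) = m$, i.e.\ $m \mid n$. For the \lcm\ identity, I would use the standard relation $\lcm(m,n)\gcd(m,n) = mn$ valid in the monoid of monomials (read exponentwise, $\max + \min = \text{sum}$). Applying $\sigma$ and using that it is a monoid homomorphism gives $\lcm(m,n)^s \gcd(m,n)^s = m^s n^s$. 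On the other hand $\lcm(m^s,n^s)\gcd(m^s,n^s) = m^s n^s$ as well. Combining these with part (b) ($\gcd(m^s,n^s) = \gcd(m,n)^s$) and cancelling (legitimate since $P$ is a domain, or directly exponentwise) yields $\lcm(m^s, n^s) = \lcm(m,n)^s$. The final sentence is then just the observation that $(\Mon(P), \mid)$ is a lattice with meet $\gcd$ and join $\lcm$, and we have just shown $\sigma$ preserves both.

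I do not anticipate a genuine obstacle here; the only point requiring a little care is the direction (a)$\Rightarrow$(b), where one must be sure that coprimality of the images $x_i^s$ on generators really does propagate to arbitrary monomials. This is where the standing hypothesis that $\sigma$ is \emph{monomial} and \emph{injective} (equivalently, that $\{\sigma(x_i)\}$ are algebraically independent monomials) is essential: it guarantees that distinct variables cannot be merged by $\sigma$ and that the exponent bookkeeping in $\gcd(m^s,n^s)$ decouples across the index $i$ exactly as it does for $\gcd(m,n)$. Everything else is routine exponent arithmetic and formal use of the homomorphism property.
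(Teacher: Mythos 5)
Your proof is correct and follows essentially the same route as the paper's: both rely on the disjointness of supports of the monomials $x_i^s$ to establish (a)$\Rightarrow$(b), then use injectivity of $\sigma$ on $\Mon(P)$ for the backward divisibility implication and the identity $\lcm(m,n)\gcd(m,n) = mn$ for the $\lcm$ claim. The only cosmetic difference is in (a)$\Rightarrow$(b), where you compute directly with exponent vectors while the paper first treats the coprime case $\gcd(m,n)=1$ and then reduces the general case by factoring out $u = \gcd(m,n)$.
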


\begin{proof}
Assume (a) and let $m,n\in\Mon(P)$ such that $\gcd(m,n) = 1$.
If $m = x_{i_1}\cdots x_{i_k}$ and $n = x_{j_1}\cdots x_{j_l}$ then
$m^s = x_{i_1}^s\cdots x_{i_k}^s$ and $n^s = x_{j_1}^s\cdots x_{j_l}^s$
with $\{i_1,\ldots,i_k\}\cap\{j_1,\ldots,j_l\} = \emptyset$.
Since $\gcd(x_i^s,x_j^s) = 1$ for all $i\neq j$, we conclude that
$\gcd(m^s,n^s) = 1$. Assume now $\gcd(m,n) = u$ and hence
$\gcd(m/u,n/u) = 1$. Then $\gcd(m^s/u^s,n^s/u^s) =
\gcd((m/u)^s,(n/u)^s) = 1$ and therefore $\gcd(m^s,n^s) = u^s$
that is (b) holds. Suppose $m^s\mid n^s$ that is $m^s = \gcd(m^s,n^s) =
\gcd(m,n)^s$. Since $\sigma$ is injective we have that $m = \gcd(m,n)$
that is $m\mid n$. Moreover, one obtains $\lcm(m,n)^s = (m n / \gcd(m,n))^s =
(m n)^s / \gcd(m,n)^s = m^s n^s / \gcd(m^s,n^s) = \lcm(m^s,n^s)$
for all $m,n\in\Mon(P)$.
\end{proof}

\begin{definition}
We say that {\em $\sigma$ is compatible with divisibility in $\Mon(P)$}
if for all $i\neq j$, one has $\gcd(x_i^s,x_j^s) = 1$ that is the variables
occuring in the monomials $x_i^s,x_j^s$ are disjoint.
\end{definition}

Note that if a monomial endomorphism of $P$ is compatible with divisibility
then it is automatically injective since the monomials $x_i^s$ are
algebraically independent.
Let $\mid$ be the divisibility relation and $\prec$ a monomial ordering
on $\Mon(P)$. Throughout the rest of the paper, {\em we make the assumption}
that the monomial endomorphism $\sigma:P \to P$ is compatible both with
$\mid$ and with $\prec$.

We recall now some basic results in the theory of module \Gr\ bases
by applying them to the free $P$-module $S$ whose (left) free basis is
$\Sigma = \{s^i\}_{i\geq 0}$. Consider $f,g\in S\setminus\{0\}$
two elements whose leading monomials have the same $s$-degree (component),
that is $\lm(f) = m s^i, \lm(g) = n s^i$ with $m,n\in\Mon(P)$ and $i\geq 0$.
If we put $\lc(f) = \alpha, \lc(g) = \beta$ and $l = \lcm(m,n)$,
one defines the \mbox{\em S-polynomial} $\spoly(f,g) =
(l/\alpha m) f - (l/\beta n) g$. Clearly $\spoly(f,g) = - \spoly(g,f)$
and $\spoly(f,f) = 0$.

\begin{proposition}[Buchberger criterion]
Let $G$ be a generating set of a $P$-submo\-du\-le $J\subset S$. Then
$G$ is a \Gr\ basis of $J$ if and only if for all $f,g\in G\setminus\{0\}$
such that $\deg_s(\lm(f)) = \deg_s(\lm(g))$, the S-polynomial $\spoly(f,g)$
has a \Gr\ representation with respect to $G$.
\end{proposition}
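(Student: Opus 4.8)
The statement is the module Buchberger criterion for the free $P$-module $S$ with basis $\Sigma=\{s^i\}_{i\geq 0}$, with the $s$-degree playing the role of a ``basis-vector index'', and the plan is to prove it along the classical lines. The ``only if'' direction is immediate: every $\spoly(f,g)$ lies in $J$, so it has a \Gr\ representation with respect to $G$ as soon as $G$ is a \Gr\ $P$-basis.

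For the ``if'' direction I would argue by well-founded induction along $\prec$. Fix $f\in J\setminus\{0\}$; since $G$ generates $J$ as a $P$-module, write $f=\sum_i p_i g_i$ with $p_i\in P$, $g_i\in G$, and among all such $P$-representations pick one minimizing $\delta:=\max_i\lm(p_i)\lm(g_i)\in\Mon(S)$, which exists because $\prec$ is a well-ordering. Since $\Mon(S)$ is a multiplicative basis and $\prec$ is multiplicatively compatible, $\lm(p_ig_i)=\lm(p_i)\lm(g_i)$, hence $\lm(f)\preceq\delta$; if equality holds, this is already a \Gr\ representation and $\lm(f)\in\LM_P(G)$. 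Otherwise $\delta\succ\lm(f)$, so, setting $I=\{i:\lm(p_i)\lm(g_i)=\delta\}$, the $\delta$-terms must cancel: $\sum_{i\in I}\lc(p_i)\lc(g_i)=0$. Writing $\delta=Ms^d$ with $M\in\Mon(P)$ one gets $\lm(g_i)=m_is^d$ and $\lm(p_i)=M/m_i$ for every $i\in I$; in particular all these $g_i$ have leading monomials of the same $s$-degree $d$, so the S-polynomials $\spoly(g_i,g_j)$, $i,j\in I$, are defined. This $s$-degree coincidence is the only place the hypothesis $\deg_s(\lm(f))=\deg_s(\lm(g))$ enters, and it is exactly what makes restricting to pairs of equal $s$-degree lossless.

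Then I would invoke the classical cancellation lemma: telescoping along an enumeration of $I$ and using $\sum_{i\in I}\lc(p_i)\lc(g_i)=0$,
\[
\sum_{i\in I}\lt(p_i)\,g_i \;=\; \sum_{\text{consecutive }(i,j)} c_{ij}\,\tfrac{M}{l_{ij}}\,\spoly(g_i,g_j)
\]
for suitable $c_{ij}\in K$, where $l_{ij}=\lcm(m_i,m_j)\mid M$ so that $M/l_{ij}\in\Mon(P)$. Because $\lm(\spoly(g_i,g_j))\prec l_{ij}s^d$ and left multiplication by the $s$-degree-zero monomial $M/l_{ij}$ is strictly monotone and sends $\lm$ to the product, every summand on the right has leading monomial $\prec\delta$. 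Substituting the hypothesised \Gr\ $P$-representations $\spoly(g_i,g_j)=\sum_l q^{(ij)}_l g_l$ (with $q^{(ij)}_l\in P$, $\lm(q^{(ij)}_l)\lm(g_l)\preceq\lm(\spoly(g_i,g_j))$) and collecting terms rewrites $\sum_{i\in I}\lt(p_i)g_i$ as $\sum_l\tilde q_l g_l$ with $\tilde q_l\in P$ and $\lm(\tilde q_l)\lm(g_l)\prec\delta$. Feeding this back into $f=\sum_i p_ig_i$ (replace $p_i$ by $p_i-\lt(p_i)$ for $i\in I$, fold in the $\tilde q_l$) yields a $P$-representation of $f$ with strictly smaller $\delta$, contradicting minimality. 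Hence the minimal $\delta$ equals $\lm(f)$, so $\lm(f)\in\LM_P(G)$ for all $f\in J$, i.e.\ $\LM_P(J)=\LM_P(G)$.

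I expect the obstacles to be purely routine: checking that $\lm$ is multiplicative against monomials of $P$ and that the ordering of $S$ is strictly monotone under such multiplication (both follow from $\Mon(S)$ being a multiplicative $K$-basis and from the monomial-ordering axioms), and the index bookkeeping in the cancellation lemma, in particular that $m_i,m_j\mid M$ makes $M/l_{ij}$ an honest monomial of $P$. The one genuinely conceptual point is the $s$-degree coincidence identified above.
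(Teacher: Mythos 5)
The paper does not actually prove this proposition: it cites \cite{Ei,GP} for the module Buchberger criterion and merely remarks that the classical argument carries over because it relies on the monomial ordering, not Noetherianity. Your write-up supplies precisely that classical proof — take a $P$-representation of $f\in J$ minimizing $\delta=\max_i\lm(p_i)\lm(g_i)$ (justified by well-ordering, which you correctly flag as the replacement for Noetherianity over the non-Noetherian $P$ and infinite-rank $S$); if $\delta\succ\lm(f)$, observe that the top terms must cancel inside a single free-module component $Ps^d$; telescope that cancellation into a $K$-combination of $\tfrac{M}{l_{ij}}\spoly(g_i,g_j)$; and substitute the assumed \Gr\ representations of the S-polynomials to strictly lower $\delta$ and contradict minimality. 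The argument is correct, and you also correctly identify where the hypothesis $\deg_s(\lm(f))=\deg_s(\lm(g))$ enters: cancelling top terms must share their $s$-component, which is exactly why only equal-$s$-degree pairs need to be tested. In short, you have spelled out the proof the paper elected to leave implicit.
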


Usually the above result, see for instance \cite{Ei,GP},
is stated when $P$ is a polynomial ring with a finite number of variables
and $S$ is a $P$-module of finite rank. In fact such assumptions are
not needed since Noetherianity is not used in the proof, but only the
existence of a monomial ordering for the ring $P$ and the free module $S$.
See also the comprehensive Bergman's paper \cite{Be} where the ``Diamond
Lemma'' is proved without any restriction on the finiteness of the variable
set. In the following results we show how the Buchberger criterion, and
hence the corresponding algorithm, can be reduced up to the symmetry
defined by the monoid $\Sigma$ on $S$.

\begin{lemma}
\label{sigmaspoly}
Let $f,g\in S\setminus\{0\}$ and let $i\leq j$ such that
$\deg_s(lm(f)) + i = \deg_s(\lm(g)) + j$. Then
$\spoly(s^i f, s^j g) = s^i \spoly(f, s^{j-i} g)$ and $\spoly(f s^i, g s^j) =
\spoly(f, g s^{j-i}) s^i$.
\end{lemma}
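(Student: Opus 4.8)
The plan is a direct computation in $S$. It rests on two facts already available: that $\sigma$ is a $K$-algebra endomorphism compatible with divisibility, so that $(g/f)^s = g^s/f^s$ whenever $f \mid g$ and $\lcm(p^s,q^s) = \lcm(p,q)^s$ for all $p,q\in\Mon(P)$ by Proposition \ref{divcompat}; and that both the left multiplication $v\mapsto sv$ and the right multiplication $v\mapsto vs$ are strictly order-preserving on $\Mon(S)$ by part (ii) of the definition of a monomial ordering. First I would record how leading terms transform under these operations: if $\lm(f) = m s^a$ and $\lc(f) = \alpha$, then $\lm(s^i f) = m^{s^i} s^{a+i}$ and $\lm(f s^i) = m s^{a+i}$, with leading coefficient still $\alpha$ in both cases, since $s^i$ (on either side) sends the distinct terms of $f$ to distinct monomials in an order-preserving way and $\sigma$ fixes $K$.

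For the left identity, put $k = j-i\ge 0$, write $\lm(f)=ms^a$, $\lm(g)=ns^b$, $\alpha=\lc(f)$, $\beta=\lc(g)$, so the hypothesis reads $a+i=b+j$. Then $\deg_s(\lm(f)) = a = b+k = \deg_s(\lm(s^k g))$ and $\deg_s(\lm(s^i f)) = a+i = b+j = \deg_s(\lm(s^j g))$, so both $\spoly(f,s^k g)$ and $\spoly(s^i f,s^j g)$ are defined. From $\lm(s^k g) = n^{s^k} s^{b+k}$ one has
\[
\spoly(f, s^k g) = \frac{l'}{\alpha m}\, f - \frac{l'}{\beta\, n^{s^k}}\, s^k g, \qquad l' = \lcm(m, n^{s^k}).
\]
Applying $s^i$ on the left and using the defining relation $s^i p = p^{s^i} s^i$ for $p\in P$, together with $\sigma^i(l'/m) = (l')^{s^i}/m^{s^i}$ and $\sigma^i(l'/n^{s^k}) = (l')^{s^i}/(n^{s^k})^{s^i}$ where $(n^{s^k})^{s^i} = n^{s^{k+i}} = n^{s^j}$ and $s^{i+k} = s^j$, I obtain
\[
s^i\, \spoly(f, s^k g) = \frac{(l')^{s^i}}{\alpha\, m^{s^i}}\, (s^i f) - \frac{(l')^{s^i}}{\beta\, n^{s^j}}\, (s^j g),
\]
while, with $L = \lcm(m^{s^i}, n^{s^j})$,
\[
\spoly(s^i f, s^j g) = \frac{L}{\alpha\, m^{s^i}}\, (s^i f) - \frac{L}{\beta\, n^{s^j}}\, (s^j g).
\]
It then remains to identify $L = (l')^{s^i}$: since $n^{s^j} = (n^{s^k})^{s^i}$, iterating $\lcm(p^s,q^s) = \lcm(p,q)^s$ gives $L = \lcm(m^{s^i}, (n^{s^k})^{s^i}) = \lcm(m, n^{s^k})^{s^i} = (l')^{s^i}$, so the two displays coincide and $\spoly(s^i f, s^j g) = s^i\, \spoly(f, s^{j-i} g)$.

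For the right identity the bookkeeping is the same but lighter, because $v\mapsto vs^i$ leaves the $P$-part of every monomial unchanged and only raises the $s$-degree by $i$. With $\lm(f)=ms^a$, $\lm(g)=ns^b$ and $a+i=b+j$ one checks that $\spoly(f, g s^{j-i})$ and $\spoly(f s^i, g s^j)$ are both defined, that in each of the two pairs the $P$-parts of the leading monomials are $m$ and $n$ and the leading coefficients are $\alpha=\lc(f)$ and $\beta=\lc(g)$, so both S-polynomials carry the same factor $l = \lcm(m,n)$. Hence
\[
\spoly(f, g s^{j-i})\, s^i = \frac{l}{\alpha m}\, f s^i - \frac{l}{\beta n}\, g s^{j} = \spoly(f s^i, g s^j).
\]
I expect no genuine obstacle: the argument is formal once the $s$-degrees are tracked carefully. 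The single non-bookkeeping ingredient is the lattice identity $\lcm(p^s,q^s) = \lcm(p,q)^s$ of Proposition \ref{divcompat}, which is precisely what makes the two leading-monomial least common multiples in the left case agree after applying $\sigma^i$; the right case needs only that right multiplication by $s$ preserves the monomial ordering.
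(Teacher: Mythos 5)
Your proposal is correct and follows essentially the same route as the paper's proof: write out the leading terms of $f$, $g$ and their $s$-translates, then invoke the lattice identity $\lcm(p^s,q^s)=\lcm(p,q)^s$ from Proposition~\ref{divcompat} to match the least common multiples in the left case, with the right case being the lighter bookkeeping since right multiplication by $s^i$ leaves the $P$-part of each monomial untouched. You merely make more explicit the step that left and right multiplication by $s^i$ carry $\lm(f)$ to $\lm(s^i f)$ and $\lm(f s^i)$, which the paper states without comment.
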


\begin{proof}
Let $\lt(f) = \alpha m s^k, \lt(g) = \beta n s^l$ with
$\alpha,\beta\in K^*$ and $m,n\in\Mon(P)$. Then $\lt(s^i f) = 
\alpha m^{s^i} s^{i+k}, \lt(s^j g) = \beta n^{s^j} s^{j+l}$
and $\lt(s^{j-i} g) = \beta n^{s^{j-i}} s^{j-i+l}$.
By compatibility of $\sigma$ with divisibility in $\Mon(P)$,
if $q = \lcm(m,n^{s^{j-i}})$ then $\lcm(m^{s^i}, n^{s^j}) = q^{s^i}$.
Therefore $h = \spoly(f, s^{j-i} g) =
(q/\alpha m) f - (q/\beta n^{s^{j-i}}) s^{j-i} g$ and hence we have
$s^i h = (q^{s^i}/\alpha m^{s^i}) s^i f - (q^{s^i}/\beta n^{s^j}) s^j g =
\spoly(s^i f, s^j g)$.

Note now that $\lt(f s^i) =  \alpha m s^{i+k}, \lt(g s^j) = \beta n s^{j+l}$
and $\lt(g s^{j-i}) = \beta n s^{j-i+l}$. If $q = \lcm(m,n)$ and
$h = \spoly(f, g s^{j-i}) = (q/\alpha m) f - (q/\beta n) g s^{j-i}$ we have
simply that $h s^i = (q/\alpha m) f s^i - (q/\beta n) g s^j =
\spoly(f s^i, g s^j)$.
\end{proof}

\begin{proposition}[Two-sided $\Sigma$-criterion]
\label{sigmacrit}
Let $G$ be an $s$-homogeneous basis of a graded two-sided ideal $J\subset S$.
Then $G$ is a \Gr\ basis of $J$ if and only if for all $f,g\in G\setminus\{0\}$
and for any $i,j\geq 0$, the S-polynomials $\spoly(f, s^i g s^j)$
$(\deg_s(f) = \deg_s(g) + i + j)$ and $\spoly(f s^i, s^j g)$ $(\deg_s(f) + i =
\deg_s(g) + j)$ have a \Gr\ representation with respect to
$\Sigma\,G\,\Sigma$.
\end{proposition}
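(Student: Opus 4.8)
The plan is to derive this from Proposition~\ref{gb2lgb}, the Buchberger criterion for the free $P$-module $S$, and Lemma~\ref{sigmaspoly}. By hypothesis $G$ generates the two-sided ideal $J$, so by Proposition~\ref{gen2lgen} the set $H:=\Sigma\,G\,\Sigma$ generates $J$ as a $P$-submodule of $S$, and by Proposition~\ref{gb2lgb} $G$ is a \Gr\ basis of $J$ if and only if $H$ is a \Gr\ $P$-basis of $J$. By the Buchberger criterion the latter holds exactly when $\spoly(u,v)$ has a \Gr\ representation with respect to $H$ for every pair $u,v\in H\setminus\{0\}$ whose leading monomials have equal $s$-degree. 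Since $H\setminus\{0\}$ consists precisely of the elements $s^{a}g\,s^{b}$ with $g\in G\setminus\{0\}$ and $a,b\geq 0$, the task reduces to showing that all these $\spoly(s^{a_1}g_1 s^{b_1},\,s^{a_2}g_2 s^{b_2})$ admit \Gr\ representations with respect to $H$ as soon as the two distinguished families in the statement do. The ``only if'' direction is then immediate, since each displayed S-polynomial is itself an S-polynomial of a pair of elements of $H$ (take the exponents equal to $0$ on one side).

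For the ``if'' direction I would reduce an arbitrary $\spoly(s^{a_1}g_1 s^{b_1},\,s^{a_2}g_2 s^{b_2})$ to one of the two prescribed shapes by successively peeling off common powers of $s$. Using $\spoly(p,q)=-\spoly(q,p)$ we may assume $a_1\leq a_2$; writing the two arguments with the common left factor $s^{a_1}$ and applying the left half of Lemma~\ref{sigmaspoly} with equal exponents gives $\spoly(s^{a_1}g_1 s^{b_1},\,s^{a_2}g_2 s^{b_2})=s^{a_1}\,\spoly(g_1 s^{b_1},\,s^{e}g_2 s^{b_2})$ with $e:=a_2-a_1\geq 0$. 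A common right factor is then removed by the right half of Lemma~\ref{sigmaspoly}: if $b_1\leq b_2$ one gets $\spoly(g_1 s^{b_1},\,s^{e}g_2 s^{b_2})=\spoly(g_1,\,s^{e}g_2 s^{b_2-b_1})\,s^{b_1}$, an S-polynomial of the first family with $(f,g,i,j)=(g_1,g_2,e,b_2-b_1)$; if $b_1>b_2$ one gets $\spoly(g_1 s^{b_1},\,s^{e}g_2 s^{b_2})=\spoly(g_1 s^{b_1-b_2},\,s^{e}g_2)\,s^{b_2}$, an S-polynomial of the second family with $(f,g,i,j)=(g_1,g_2,b_1-b_2,e)$. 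The equal-$s$-degree hypothesis $a_1+\deg_s(g_1)+b_1=a_2+\deg_s(g_2)+b_2$ is then seen to translate exactly into the side conditions $\deg_s(f)=\deg_s(g)+i+j$ and $\deg_s(f)+i=\deg_s(g)+j$ attached to the two families, so no mismatch arises.

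What is left, and where the actual content sits, is to check that having a \Gr\ representation with respect to $H$ is preserved under left and right multiplication by powers of $s$, which is precisely what the reduction above produces. If $h=\sum_t p_t e_t$ with $p_t\in P$, $e_t\in H$, and $\lm(h)\succeq\lm(p_t)\lm(e_t)$ for all $t$, then $s^{a}h\,s^{b}=\sum_t p_t^{s^{a}}\,(s^{a}e_t s^{b})$ is again a $P$-combination of elements of $H$, since $s^{a}(s^{c}g s^{d})s^{b}=s^{a+c}g\,s^{d+b}\in H$; and because $\sigma$ is compatible with $\prec$ one has $\lm(p_t^{s^{a}})=\lm(p_t)^{s^{a}}$, while the monomial-ordering axioms make $w\mapsto s^{a}w\,s^{b}$ strictly increasing on $\Mon(S)$ and give $\lm(pf)=\lm(p)\lm(f)$ for $p\in P$, $f\in S$. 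Combining these, $\lm(s^{a}h\,s^{b})=s^{a}\lm(h)\,s^{b}\succeq s^{a}\lm(p_t)\lm(e_t)\,s^{b}=\lm\bigl(p_t^{s^{a}}(s^{a}e_t s^{b})\bigr)$ for every $t$, so $s^{a}h\,s^{b}$ inherits a \Gr\ representation with respect to $H$ — the same type of computation that already appears in the proof of Proposition~\ref{gb2lgb}. Assembling the two directions with Proposition~\ref{gb2lgb} finishes the argument. The only delicate point is the bookkeeping: stripping the left power first and then the right one, and verifying that this always lands in one of the two prescribed families with the correct degree condition; the order-compatibility statements invoked are routine given the standing assumptions on $\sigma$.
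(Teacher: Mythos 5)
Your proof is correct and follows essentially the same route as the paper: apply Proposition~\ref{gb2lgb} to reduce to the module Buchberger criterion for $H=\Sigma\,G\,\Sigma$, use Lemma~\ref{sigmaspoly} twice (once on the left, once on the right) to peel off common $s$-powers from $\spoly(s^{a_1}g_1 s^{b_1},s^{a_2}g_2 s^{b_2})$ until one of the two prescribed families appears, and then show that \Gr\ representations are preserved under $h\mapsto s^{a}h\,s^{b}$ via $\lm(p^{s^a})=\lm(p)^{s^a}$ and the multiplicativity of $\prec$. The only cosmetic difference is that you organize the four-case analysis with a WLOG $a_1\leq a_2$ via $\spoly(p,q)=-\spoly(q,p)$, where the paper just lists two cases and says ``and so on''.
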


\begin{proof}
By Proposition \ref{gb2lgb} we have to prove that $G' = \Sigma\,G\,\Sigma$
is a \Gr\ basis of $J$ as $P$-module, that is $G'$ is $P$-basis of $J$ and
the S-polynomials $h = \spoly(s^i f s^k, s^j g s^l)$ have a \Gr\ representation
with respect to $G'$ for all $f,g\in G\setminus\{0\}$ and for any $i,j,k,l\geq 0$
such that $\deg_s(f) + i + k = \deg_s(g) + j + l$. Since $G$ is a homogeneous
basis of $J$ as two-sided ideal, from Proposition \ref{gen2lgen} it follows that
$G'$ is a generating set of $J'$ as $P$-module.
Consider now all possibilities $i\leq j$ or $i\geq j$ and $k\leq l$ or $k\geq l$
and apply Lemma \ref{sigmaspoly}.
If $i\leq j,k\leq l$ one has $h = s^i \spoly(f, s^{j-i} g s^{l-k}) s^k$,
if $i\leq j,k\geq l$ then $h = s^i \spoly(f s^{l-k}, s^{j-i} g) s^l$, and
so on. Then, assume that a S-polynomial $h = \spoly(f,g)$, with
$f,g\in G'\setminus\{0\}$, has a \Gr\ representation with respect to $G'$
as $P$-basis of $J$, that is $h = \sum_i f_i g_i$ with $f_i\in P, g_i\in G'$
and $\lm(h)\geq\lm(f_i)\lm(g_i)$, for all $i$. We have to prove that
$s^k h s^l$ has also a \Gr\ representation with respect to $G'$
for any $k,l\geq 0$. One has that $s^k h s^l = \sum_i f_i^{s^k} s^k g_i s^l$
and $\lm(s^k h s^l) = s^k \lm(h) s^l\geq s^k \lm(f_i)\lm(g_i) s^l =
\lm(f_i)^{s^k} s^k \lm(g_i) s^l = \lm(f_i^{s^k}) \lm(s^k g_i s^l)$.
Since $s^k g_i s^l\in G' = \Sigma\,G\,\Sigma$, one obtains that
$s^k h s^l$ has a \Gr\ representation with respect to $G'$.
\end{proof}

A criterion similar to Proposition \ref{sigmacrit} holds clearly for
\Gr\ left bases of left ideals of $S$ where no restrictions about
the $s$-homogeneity of bases and ideals are needed.

\begin{proposition}[Left $\Sigma$-criterion]
\label{leftsigmacrit}
Let $G$ be a basis of a left ideal $J\subset S$. Then $G$ is a \Gr\ basis
of $J$ if and only if for all elements $f,g\in G\setminus\{0\}$ such that
$i = \deg_s(\lm(f)) - \deg_s(\lm(g))\geq 0$, the S-polynomial
$\spoly(f, s^i g)$ has a \Gr\ representation with respect to $\Sigma\,G$.
\end{proposition}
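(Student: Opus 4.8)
The plan is to mirror the proof of Proposition \ref{sigmacrit}; the absence of right multiplications makes the argument considerably shorter. The first step is to reduce the statement to the ordinary Buchberger criterion applied to the free $P$-module $S$ with the $P$-generating set $\Sigma\,G$. Arguing as in the proof of Proposition \ref{gen2lgen} with all right factors trivial, since $G$ generates $J$ as a left ideal one has $J = \sum_{g\in G} S g = \sum_{g\in G,\,j\geq 0} P\,s^j g$, so $\Sigma\,G$ is a $P$-basis of $J$. Moreover $\lm(J)$ is closed under left multiplication by monomials, since $p s^k\lm(f) = \lm(p s^k f)$ and $p s^k f\in J$; combining this with $\lm(s^j g) = s^j\lm(g)$ (compatibility of $\sigma$ with $\prec$) and with the description of left divisibility via $P$-divisibility, one checks as in the proof of Proposition \ref{gb2lgb} that $\LM_l(G) = \LM_P(\Sigma\,G)$ and $\LM_l(J) = \LM_P(J)$. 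Hence $G$ is a \Gr\ left basis of $J$ if and only if $\Sigma\,G$ is a \Gr\ $P$-basis of $J$.

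By the Buchberger criterion the latter holds if and only if for all $\varphi,\psi\in\Sigma\,G\setminus\{0\}$ with $\deg_s(\lm(\varphi)) = \deg_s(\lm(\psi))$ the S-polynomial $\spoly(\varphi,\psi)$ has a \Gr\ representation with respect to $\Sigma\,G$. Write $\varphi = s^a f$, $\psi = s^b g$ with $f,g\in G$; using $\spoly(\varphi,\psi) = -\spoly(\psi,\varphi)$ we may assume $a\leq b$, and then Lemma \ref{sigmaspoly} gives $\spoly(s^a f, s^b g) = s^a\,\spoly(f, s^{b-a} g)$. The equality of leading $s$-degrees becomes $\deg_s(\lm(f)) = \deg_s(\lm(g)) + (b-a)$, i.e. $i := b-a = \deg_s(\lm(f)) - \deg_s(\lm(g))\geq 0$, so $\spoly(f, s^{b-a} g)$ ranges precisely over the S-polynomials $\spoly(f, s^i g)$ appearing in the hypothesis (the case $a = 0$).

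It remains to note that left multiplication by $s^a$ preserves \Gr\ representations relative to $\Sigma\,G$: if $h = \spoly(f, s^i g) = \sum_k c_k g_k$ with $c_k\in P$, $g_k\in\Sigma\,G$ and $\lm(h)\succeq\lm(c_k)\lm(g_k)$ for all $k$, then $s^a h = \sum_k c_k^{s^a}\,s^a g_k$, where $s^a g_k\in\Sigma\,G$ and, by the compatibility hypotheses on $\sigma$, $\lm(s^a h) = s^a\lm(h)\succeq s^a\lm(c_k)\lm(g_k) = \lm(c_k^{s^a})\,\lm(s^a g_k)$. Thus every $\spoly(s^a f, s^b g)$ has a \Gr\ representation with respect to $\Sigma\,G$ once the family $\spoly(f, s^i g)$ in the hypothesis does, while conversely those S-polynomials are themselves among the S-polynomials of $\Sigma\,G$; together with the first paragraph this yields the equivalence. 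I expect the only mildly delicate point, which I would write out carefully, to be the bookkeeping with $\lm$ under the left action of $\sigma$ in the identification $\LM_l(G) = \LM_P(\Sigma\,G)$; the rest is a transcription of the two-sided case with the right factors erased.
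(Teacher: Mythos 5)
Your proof is correct and is exactly the argument the paper has in mind: the paper states this proposition without a written proof, remarking only that it ``holds clearly'' by an argument similar to Proposition \ref{sigmacrit} with the right factors dropped and no homogeneity assumption, and that is precisely what you have worked out. The reduction $\LM_l(G)=\LM_P(\Sigma\,G)$ and $\LM_l(J)=\LM_P(J)$, the use of Lemma \ref{sigmaspoly} to factor out $s^a$, and the check that left multiplication by $s^a$ preserves \Gr\ representations relative to $\Sigma\,G$ are all sound.
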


A standard procedure in the (module) Buchberger algorithm is the following.

\suppressfloats[b]
\begin{algorithm}\caption{\Reduce}
\begin{algorithmic}[0]
\State \text{Input:} $f\in S$ and $G\subset S$.
\State \text{Output:} $h\in S$ such that $f - h\in\langle G\rangle_P$
and $h = 0$ or $\lm(h)\notin\LM_P(G)$.
\State $h:= f$;
\While{ $h\neq 0$ and $\lm(h)\in\LM_P(G)$ }
\State choose $g\in G,g\neq 0$ such that $\lm(g)$ $P$-divides $\lm(h)$;
\State $h:= h - (\lt(h)/\lt(g)) g$;
\EndWhile;
\State \Return $h$.
\end{algorithmic}
\end{algorithm}


Note that if $\lt(g) = \alpha m s^i, \lt(h) = \beta n s^i$ with
$\alpha,\beta\in K^*$ and $m,n\in\Mon(P)$, by definition
$\lt(h)/\lt(g) = (\alpha m)/(\beta n)$. Moreover, the termination
of $\Reduce$ is provided since $\prec$ is a well-ordering on $\Mon(S)$.
In particular, even if $G$ is an infinite set, there are only a finite
number of elements $g\in G,g\neq 0$ such that $\lm(g)$ $P$-divides $\lm(h)$
and hence $\lm(g)\preceq\lm(h)$.

It is well-known that if $\Reduce(f,G) = 0$ then $f$ has a \Gr\ representation
with respect to $G$. Moreover, if $\Reduce(f,G) = h\neq 0$ then clearly
we have $\Reduce(f,G\cup\{h\}) = 0$. Therefore, from Proposition \ref{sigmacrit}
it follows immediately the correctness of the following algorithm.

\suppressfloats[b]
\begin{algorithm}\caption{\SkewGBasis}
\begin{algorithmic}[0]
\State \text{Input:} $H$, an $s$-homogeneous basis of a graded two-sided
ideal $J\subset S$.
\State \text{Output:} $G$, an $s$-homogeneous \Gr\ basis of $J$.
\State $G:= H$;
\State $B:= \{(f,g) \mid f,g\in G\}$;
\While{$B\neq\emptyset$}
\State choose $(f,g)\in B$;
\State $B:= B\setminus \{(f,g)\}$;
\ForAll{$i,j\geq 0$ such that $i + j = \deg_s(f) - \deg_s(g)$}
\State $h:= \Reduce(\spoly(f,s^i g s^j), \Sigma\,G\,\Sigma)$;
\If{$h\neq 0$}
\State $B:= B\cup\{(h,h),(h,k),(k,h)\mid k\in G\}$;
\State $G:= G\cup\{h\}$;
\EndIf;
\EndFor;
\ForAll{$i,j\geq 0$ such that $j - i = \deg_s(f) - \deg_s(g)$}
\State $h:= \Reduce(\spoly(f s^i, s^j g), \Sigma\,G\,\Sigma)$;
\If{$h\neq 0$}
\State $B:= B\cup\{(h,h),(h,k),(k,h)\mid k\in G\}$;
\State $G:= G\cup\{h\}$;
\EndIf;
\EndFor;
\EndWhile;
\State \Return $G$.
\end{algorithmic}
\end{algorithm}

\newpage

Clearly, all well-known criteria (product criterion, chain criterion, etc)
can be applied to \SkewGBasis\ to shorten the number of S-polynomials
to be considered. In fact, this algorithm can be understood as the usual
(module) Buchberger procedure applied to the $P$-basis $\Sigma\,H\,\Sigma$,
where an additional criterion to avoid ``useless pairs'' is provided
by Proposition \ref{sigmacrit}. Note that owing to Proposition
\ref{leftsigmacrit}, one has also a similar procedure for computing a
\Gr\ left basis of any left ideal of $S$. Since the set $\Sigma\,H\,\Sigma$
if infinite even if the basis $H$ is eventually finite ($S$ is a
non-Noetherian ring) one has that \SkewGBasis\ does not admit general
termination. In particular, the cycle ``{\bf for all} $i,j\geq 0$
such that $j - i = \deg_s(f) - \deg_s(g)$ {\bf do}'' never stops unless
one bounds the $s$-degree $\deg_s(f) + i = \deg_s(g) + j$.
As for other non-Noetherian structures like the free associative
algebra that in fact can be embedded in $S$ (see Section 6),
the termination of homogeneous \Gr\ bases computations can be obtained
only under truncation.

\begin{proposition}
\label{termin}
Let $J\subset S$ be a graded two-sided ideal and fix $d\geq 0$.
Assume that $J$ has a $s$-homogeneous basis $H$ such that $H_d =
\{f\in H\mid \deg_s(f)\leq d\}$ is a finite set. Then, there exists
an $s$-homogeneous \Gr\ basis $G$ of $J$ such that $G_d$ is also finite.
In other words, if we consider a selection strategy for the S-polynomials
based on their $s$-degree, we obtain that the $d$-truncated version
of the algorithm \SkewGBasis\ terminates in a finite number of steps.
\end{proposition}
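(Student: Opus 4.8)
The plan is to show that, when we run \SkewGBasis\ with a selection strategy that always picks S-polynomials of minimal $s$-degree, and we stop as soon as all pending pairs would produce S-polynomials of $s$-degree exceeding $d$, the algorithm halts and the output $G$ satisfies $\LM(G)_d = \LM(J)_d$, so that the full (possibly infinite) \Gr\ basis agrees with $G$ in $s$-degrees $\le d$. First I would make precise the $s$-degree bookkeeping: for a pair $(f,g)$ the S-polynomials $\spoly(f, s^i g s^j)$ and $\spoly(f s^i, s^j g)$ that enter the algorithm have $s$-degree equal to $\deg_s(f)$ in the first family and $\deg_s(f)+i = \deg_s(g)+j$ in the second. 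By Lemma~\ref{sigmaspoly} every such S-polynomial is, up to left/right multiplication by powers of $s$, obtained from one with $\min(i,j)=0$; and multiplying by $s$ only raises the $s$-degree. Hence the reductions relevant to recovering $\LM(J)$ in $s$-degree $\le d$ all come from S-polynomials of $s$-degree $\le d$, and \emph{those are finite in number at each stage}: the second family has $i,j$ bounded by $d$, and reducing them modulo $\Sigma\,G\,\Sigma$ only involves the finitely many elements of $G$ whose leading monomial has $s$-degree $\le d$ (this is exactly the finiteness remark accompanying \Reduce).

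Next I would prove termination of the $d$-truncated procedure. New elements $h$ are only added when $\Reduce(\spoly(\cdot),\Sigma\,G\,\Sigma)=h\ne 0$, and then $\lm(h)\notin\LM_P(\Sigma\,G\,\Sigma)$, so $\lm(h)$ is strictly new. Restricting attention to $s$-degrees $\le d$, I would argue that the set $\{\lm(h) : h \text{ added with } \deg_s(h)\le d\}$ forms an antichain under $P$-divisibility inside $\Mon(S)_{\le d} = \bigcup_{i\le d}\Mon(P)\,s^i$, because each newly added leading monomial is $P$-undivided by all previous ones (here one also uses Proposition~\ref{toS2P}: $\deg_s$ of a leading monomial can only decrease or stay equal along the new elements produced in bounded $s$-degree). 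An antichain in $\bigsqcup_{i\le d}\Mon(P)$ under divisibility is finite by Higman's/Dickson's lemma applied componentwise — this is where the assumption $P = K[x_0,x_1,\ldots]$ with the Higman-type monomial ordering of Section~3 is used, exactly as it guarantees a monomial ordering in the first place. So only finitely many elements of $s$-degree $\le d$ are ever added, and since at each step we process S-polynomials of strictly controlled $s$-degree, the truncated \textbf{while} loop empties $B$ of all pairs relevant below degree $d$ in finitely many steps.

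Then I would assemble the correctness statement. Let $G$ be the (finite in degree $\le d$) set produced. By construction, for every $f,g\in G$ with the relevant $s$-degree constraints and with the resulting S-polynomial of $s$-degree $\le d$, that S-polynomial reduces to $0$ modulo $\Sigma\,G\,\Sigma$, i.e.\ has a \Gr\ representation with respect to $\Sigma\,G\,\Sigma$. I would then invoke the Two-sided $\Sigma$-criterion (Proposition~\ref{sigmacrit}) \emph{degree by degree}: its proof passes through the Buchberger criterion for the $P$-module $S$, and that criterion is graded-compatible because $\spoly$ of $s$-homogeneous elements is $s$-homogeneous and $\Reduce$ preserves $s$-degree; hence checking S-polynomials only up to $s$-degree $d$ certifies $\LM_P(\Sigma\,G\,\Sigma)$ agrees with $\LM_P(J)$ in $s$-degree $\le d$, and by Proposition~\ref{gb2lgb} this is equivalent to $\LM(G)$ agreeing with $\LM(J)$ in $s$-degree $\le d$. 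Extending $G$ arbitrarily to a full \Gr\ basis of $J$ in higher $s$-degrees (which exists, e.g.\ by running the untruncated algorithm transfinitely, or simply by Proposition~\ref{normbas} taking $G\cup(\text{generators of }J)$ and completing) yields the claimed $s$-homogeneous \Gr\ basis $G$ of $J$ with $G_d$ finite.

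The main obstacle I expect is the finiteness argument in the second paragraph: one must be careful that the leading monomials of newly added elements really do lie in a controlled range of $s$-degrees (so that Dickson's lemma applies to a finite union of copies of $\Mon(P)$ rather than to $\Mon(S)$, where the $s$-exponent is unbounded and no well-quasi-order is available), and that the selection strategy genuinely prevents the "\textbf{for all} $i,j\ge0$ with $j-i=\deg_s(f)-\deg_s(g)$" loop from running forever — this is precisely why the $s$-degree bound $d$ must be imposed and why Lemma~\ref{sigmaspoly} (reducing everything to $\min(i,j)=0$ plus harmless $s$-shifts) is the technical heart of the matter.
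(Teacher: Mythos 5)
The termination argument in your second paragraph has a genuine gap. You claim that the set of leading monomials of newly added elements forms an antichain under $P$-divisibility inside $\bigcup_{i\le d}\Mon(P)\,s^i$ (which is correct, since \Reduce\ returns a leading monomial not in $\LM_P(\Sigma G\Sigma)$), and then you assert that such an antichain must be finite ``by Higman's/Dickson's lemma.'' But this is false for $P=K[x_0,x_1,\ldots]$ with countably many variables: $\{x_0,x_1,x_2,\ldots\}$ is already an infinite antichain under divisibility, so $\Mon(P)$ is not well-quasi-ordered by divisibility and Dickson's lemma simply does not apply. Higman's lemma guarantees the \emph{existence of a monomial well-ordering} on $\Mon(P)$ (as recalled in Section~3), but that is a completely different statement from finiteness of antichains in the divisibility poset, and you are conflating the two. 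Your ``main obstacle'' paragraph worries about the wrong thing (unboundedness of the $s$-exponent); the actual problem is the unbounded number of variables of $P$ within a single $s$-degree component.

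What the paper does, and what is genuinely needed to close this gap, is the observation that the truncated run cannot introduce new variables: all S-polynomials and reducers arise from $H'_d = \{s^i f s^j \mid f\in H_d,\ i+j+\deg_s(f)\le d\}$, a finite set involving only the finite variable set $X_d$, so the entire computation lives inside the Noetherian ring $P^{(d)}=K[X_d]$ and the finite-rank free module $S^{(d)}=\bigoplus_{i\le d}P^{(d)}s^i$. Then termination is just standard module Buchberger termination over a Noetherian ring — no wqo argument on the infinite-variable ring is needed, and indeed none is available. You would need to add this ``only finitely many variables participate'' step explicitly before any Dickson-type conclusion is licit; once you do, your argument essentially becomes the paper's, so the first and third paragraphs (the $s$-degree bookkeeping and the degree-by-degree use of the $\Sigma$-criterion) are compatible with it and fine as far as they go.
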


\begin{proof}
Denote $H'_d = \{ s^i f s^j\mid f\in H_d, i,j\geq 0, i + j + \deg_s(f)\leq d \}$.
Since $H_d$ is finite one has that $H'_d$ is also finite. Then, consider $X_d$
the finite set of variables of $P$ occurring in the elements of $H'_d$
and define $P^{(d)} = K[X_d]$ and $S^{(d)} = \bigoplus_{i\leq d} P^{(d)} s^i$.
In fact, the $d$-truncated algorithm \SkewGBasis\ computes a subset
of a \Gr\ basis of the $P^{(d)}$-submodule $J^{(d)}\subset S^{(d)}$
generated by $H'_d$. By Noetherianity of the ring $P^{(d)}$ and the free
$P^{(d)}$-module $S^{(d)}$ which has finite rank, we clearly obtain termination.
\end{proof}

Note that the above result implies algorithmic solution of the membership
problem for graded ideals of $S$ which are finitely generated up to
any degree.


\section{$\Sigma$-invariant ideals of $P$}

In this section we define \Gr\ bases of $\Sigma$-invariant
ideals $I\subset P$ which generates $I$ up to the action of $\Sigma$.
Moreover, if $P$ can be endowed with a suitable grading, we show how
such bases can be computed in the algebra $S$ for a class of graded
$\Sigma$-invariant ideals. As usual, we fix a monomial endomorphism
$\sigma:P\to P$ which is compatible both with the divisibility and
a monomial ordering on $\Mon(P)$ and we extend this to an ordering
on $\Mon(S)$. From Section 2 we know that $\Sigma$-invariant ideals
of $P$ are just left $S$-submodules of $P$. Since we make use of
identification $\Sigma = \{s^i\}$, for all $f\in P\subset S$ and
for any $i\geq 0$ one has that $s^i\cdot f = f^{s^i} =
\sigma^i(f)$ and $s^i f = (s^i\cdot f) s^i$.

\begin{definition}
Let $I\subset P$ be a $\Sigma$-invariant ideal and $G\subset I$.
We say that $G$ is a {\em $\Sigma$-basis} of $I$ if $G$ is a basis of $I$
as left $S$-module. In other words, $\Sigma\cdot G$ is a basis of $I$
as $P$-ideal.
\end{definition}

\begin{proposition}
Let $G\subset P$. Then $\lm(\Sigma\cdot G) = \Sigma\cdot \lm(G)$.
In particular, if $I$ is a $\Sigma$-invariant $P$-ideal then $\LM_P(I)$
is also $\Sigma$-invariant.
\end{proposition}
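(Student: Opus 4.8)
The plan is to reduce both assertions to a single lemma: $\sigma$ commutes with the formation of leading monomials on $P$, i.e.\ $\lm(f^s) = \lm(f)^s$ for every nonzero $f\in P$. This is where the two standing hypotheses on $\sigma$ (monomial and compatible with $\prec$, hence injective) do all the work, and I expect it to be the only point that needs care; everything else is bookkeeping.

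First I would prove this lemma. Write $f = \sum_k \alpha_k m_k$ with $\alpha_k\in K^*$ and $m_1\succ m_2\succ\cdots$ the distinct monomials in the support of $f$. Since $\sigma$ is monomial, each $m_k^s$ is again a monomial; since $\sigma$ is injective these monomials are pairwise distinct; and since $\sigma$ is compatible with $\prec$, the chain $m_1\succ m_2\succ\cdots$ is transported to $m_1^s\succ m_2^s\succ\cdots$. Hence $f^s = \sum_k \alpha_k m_k^s$ is precisely the monomial expansion of $f^s$, no leading term can cancel, $f^s\neq 0$, and $\lm(f^s) = m_1^s = \lm(f)^s$. Iterating gives $\lm(f^{s^i}) = \lm(f)^{s^i}$ for all $i\geq 0$.

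For the equality $\lm(\Sigma\cdot G) = \Sigma\cdot\lm(G)$: by definition $\Sigma\cdot G = \{\,g^{s^i}\mid g\in G,\ i\geq 0\,\}$, and its nonzero elements are exactly the $g^{s^i}$ with $g\neq 0$, again by injectivity of $\sigma$. Applying the lemma, $\lm(g^{s^i}) = \lm(g)^{s^i}$, so
$\lm(\Sigma\cdot G) = \{\,\lm(g)^{s^i}\mid g\in G,\ g\neq 0,\ i\geq 0\,\} = \Sigma\cdot\lm(G)$.

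For the ``in particular'' clause, recall that $\LM_P(I)$ is the ideal of $P$ generated by $\lm(I) = \{\,\lm(f)\mid f\in I,\ f\neq 0\,\}$. Since $\sigma$ is a ring homomorphism, to show $\sigma(\LM_P(I))\subseteq\LM_P(I)$ it suffices to check this on generators. Given nonzero $f\in I$, the lemma yields $\sigma(\lm(f)) = \lm(f^s)$, and $f^s\in I$ because $I$ is $\Sigma$-invariant; hence $\sigma(\lm(f)) = \lm(f^s)\in\lm(I)\subseteq\LM_P(I)$. Therefore $\LM_P(I)$ is stable under $\sigma$, i.e.\ $\Sigma$-invariant, as claimed. (Note this argument uses $I$ directly and does not require producing a $\Sigma$-basis of $I$.)
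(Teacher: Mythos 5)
Your proof is correct and takes essentially the same route as the paper: the paper's one-line argument is precisely the observation $\lm(s^i\cdot f) = s^i\cdot\lm(f)$, which is the lemma you prove in detail (non-cancellation of the pushed-forward support via monomiality, injectivity, and order-compatibility of $\sigma$). Your explicit verification of the ``in particular'' clause by checking $\sigma$-stability on the generators $\lm(f)$, $f\in I$, is the intended deduction and adds nothing contentious.
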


\begin{proof}
Since $\sigma$ is compatible with the monomial ordering of $P$, it is sufficient
to note that $\lm(s^i\cdot f) = s^i\cdot\lm(f)$ for any $f\in P$ and $i\geq 0$.
\end{proof}

\begin{definition}
Let $I\subset P$ be a $\Sigma$-invariant ideal and $G\subset I$.
We call $G$ a {\em \Gr\ $\Sigma$-basis} of $I$ if $\lm(G)$ is a basis
of $\LM_P(I)$ as left $S$-module. In other words, $\Sigma\cdot G$
is a \Gr\ basis of $I$ as $P$-ideal.
\end{definition}

The computation of \Gr\ $\Sigma$-bases of $\Sigma$-invariant $P$-ideals
is relevant, for instance, in applications to difference algebra
(cf.~\cite{Le}, Chapter 3). Such computations appear also
in other contexts, see for instance \cite{DLS} and \cite{BD}. Note that
in the latter paper \Gr\ $\Sigma$-bases are named ``equivariant
\Gr\ bases''.

In analogy with Proposition \ref{sigmacrit} and Proposition
\ref{leftsigmacrit}, we present here a $\Sigma$-criterion that allows
to reduce the number of S-polynomials to be checked to provide that
a $\Sigma$-basis is of \Gr\ type.

\begin{proposition}[$\Sigma$-criterion in $P$]
\label{Psigmacrit}
Let $G$ be a $\Sigma$-basis of a $\Sigma$-invariant ideal $I\subset P$.
Then $G$ is a \Gr\ $\Sigma$-basis of $I$ if and only if for all
$f,g\in G\setminus\{0\}$ and for any $i\geq 0$, the S-polynomial
$\spoly(f, s^i\cdot g)$ has a \Gr\ representation with respect to
$\Sigma\cdot G$.
\end{proposition}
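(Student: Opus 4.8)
The plan is to reduce the statement to the already-proved Buchberger criterion for $P$-submodules of $S$, exactly as Proposition~\ref{sigmacrit} and Proposition~\ref{leftsigmacrit} were reduced. Since $G$ is a $\Sigma$-basis of $I$, by definition $\Sigma\cdot G$ is a generating set of $I$ as a $P$-ideal; viewing $I\subset P = S_0\subset S$, this is a generating set of a $P$-submodule (all elements sitting in $s$-degree $0$). By the Buchberger criterion, $\Sigma\cdot G$ is a \Gr\ basis of $I$ if and only if every S-polynomial $\spoly(f', g')$ with $f', g'\in\Sigma\cdot G$ has a \Gr\ representation with respect to $\Sigma\cdot G$. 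So the real content is: the full family of S-polynomials $\spoly(s^a\cdot f,\, s^b\cdot g)$, with $f,g\in G$ and $a,b\geq 0$, can be generated (in the \Gr-representation sense) from the sub-family $\spoly(f,\, s^i\cdot g)$ with $i\geq 0$.

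The key reduction step mirrors Lemma~\ref{sigmaspoly}. First I would observe that for $f\in P$ and $a\geq 0$, acting by $\Sigma$ on $P$ is just applying $\sigma^a$, and since $\sigma$ is compatible with both divisibility and the monomial ordering on $\Mon(P)$ (Proposition~\ref{divcompat}), one gets $\lm(s^a\cdot f) = \lm(f)^{s^a}$, and for monomials $m,n$ with $q=\lcm(m, n^{s^{b-a}})$ one has $\lcm(m^{s^a}, n^{s^b}) = q^{s^a}$ whenever $a\le b$. Feeding these into the definition of the S-polynomial gives the identity $\spoly(s^a\cdot f,\, s^b\cdot g) = s^a\cdot\spoly(f,\, s^{b-a}\cdot g)$ when $a\leq b$ (and symmetrically with the roles of $f$ and $g$ swapped when $a\geq b$). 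Thus every S-polynomial among elements of $\Sigma\cdot G$ is, up to applying some $\sigma^a$, one of the S-polynomials $\spoly(f, s^i\cdot g)$ listed in the statement.

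It then remains to show that applying $\sigma^a$ preserves \Gr\ representations with respect to $\Sigma\cdot G$: if $h=\spoly(f, s^i\cdot g)$ has a \Gr\ representation $h = \sum_k p_k\,(s^{i_k}\cdot g_k)$ with $p_k\in P$, $g_k\in G$, and $\lm(h)\succeq\lm(p_k)\lm(s^{i_k}\cdot g_k)$, then applying $\sigma^a$ gives $s^a\cdot h = \sum_k p_k^{s^a}\,(s^{a+i_k}\cdot g_k)$; compatibility of $\sigma$ with the monomial ordering yields $\lm(p_k^{s^a}) = \lm(p_k)^{s^a}$ and $\lm(s^{a+i_k}\cdot g_k) = \lm(g_k)^{s^{a+i_k}}$, and applying $\sigma^a$ to the inequality $\lm(h)\succeq\lm(p_k)\lm(s^{i_k}\cdot g_k)$ (again using order-compatibility, plus $\lm$ being multiplicative under $\sigma$) gives $\lm(s^a\cdot h)\succeq\lm(p_k^{s^a})\lm(s^{a+i_k}\cdot g_k)$. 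Since each $s^{a+i_k}\cdot g_k\in\Sigma\cdot G$, this is a valid \Gr\ representation. Combining the two directions with the Buchberger criterion completes the argument. The one point requiring genuine care — the ``main obstacle'' — is verifying that $\lm$ commutes with the $\Sigma$-action on non-monomial polynomials, i.e.\ that applying $\sigma^a$ neither collapses distinct monomials nor reorders the leading term; this is exactly where injectivity of $\sigma$ (from compatibility with divisibility) and strict monotonicity of $\sigma$ on $\Mon(P)$ (compatibility with $\prec$) are both used, and it is the step I would write out most carefully.
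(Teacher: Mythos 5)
Your proof is correct and takes essentially the same route as the paper's: reduce the general S-polynomial $\spoly(s^a\cdot f, s^b\cdot g)$ with $a\leq b$ to $s^a\cdot\spoly(f, s^{b-a}\cdot g)$ using compatibility with divisibility (as in Lemma~\ref{sigmaspoly}), then transport the \Gr\ representation of $\spoly(f, s^{b-a}\cdot g)$ through $\sigma^a$ using compatibility with the ordering. You spell out more explicitly than the paper the point that $\lm(s^a\cdot h)=\lm(h)^{s^a}$ and that this relies on strict monotonicity of $\sigma$ on $\Mon(P)$, which is a welcome clarification of a step the paper leaves implicit.
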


\begin{proof}
Consider any pair of elements $s^i\cdot f, s^j\cdot g\in \Sigma\cdot G$
($f,g\in G$) and let $i\leq j$. By compatibility of $\sigma$ with
divisibility in $\Mon(P)$ (cf.~Lemma \ref{sigmaspoly}), one has that
$\spoly(s^i\cdot f, s^j\cdot g) = s^i\cdot \spoly(f, s^k\cdot g)$
with $k = j - i$. Assume that $\spoly(f, s^k\cdot g) = h =
\sum_l f_l (s^l\cdot g_l)$ ($f_l\in P, g_l\in G$) is a \Gr\
representation with respect to $\Sigma\cdot G$. Since the endomorphism
$\sigma$ is compatible with the monomial ordering of $P$, we have also
the \Gr\ representation $\spoly(s^i\cdot f, s^j\cdot g) = 
s^i\cdot h = \sum_l (s^i\cdot f_l) (s^{i+l}\cdot g_l)$.
\end{proof}

Note that some version of this criterion can be found in \cite{BD},
Theorem 2.5, where it is called ``equivariant Buchberger criterion''.
Before than this, the same ideas have been used in \cite{LSL}
for the Proposition 3.11. From this criterion it follows immediately
the correctness of the following algorithm.

\suppressfloats[b]
\begin{algorithm}\caption{SigmaGBasis}
\begin{algorithmic}[0]
\State \text{Input:} $H$, a $\Sigma$-basis of a $\Sigma$-invariant ideal
$I\subset P$.
\State \text{Output:} $G$, a \Gr\ $\Sigma$-basis of $I$.
\State $G:= H$;
\State $B:= \{(f,g) \mid f,g\in G\}$;
\While{$B\neq\emptyset$}
\State choose $(f,g)\in B$;
\State $B:= B\setminus \{(f,g)\}$;
\ForAll{$i\geq 0$}
\State $h:= \Reduce(\spoly(f,s^i\cdot g), \Sigma\cdot G)$;
\If{$h\neq 0$}
\State $B:= B\cup\{(h,h),(h,k),(k,h) \mid k\in G\}$;
\State $G:= G\cup\{h\}$;
\EndIf;
\EndFor;
\EndWhile;
\State \Return $G$.
\end{algorithmic}
\end{algorithm}

\newpage

As for the algorithm \SkewGBasis, all criteria to avoid useless pairs
can be added to \SigmaGBasis. Note that termination of this algorithm
is not provided in general (note the infinite cycle ``{\bf for all}
$i\geq 0$ {\bf do}'') and this is, in fact, one of the main problems
in applications to differential/difference algebra. Nevertheless,
in what follows we describe some class of $\Sigma$-invariant ideals
of $P$ where a truncated version of the algorithm \SigmaGBasis\ stops
in a finite number of steps. Such ideals are in bijective correspondence
with a class of graded (two-sided) ideals of $S$ which have truncated
termination of \SkewGBasis\ provided by Proposition \ref{termin}.

Consider now the $P$-module homomorphism $\pi:S\to P$ such that
$s^i\mapsto 1$, for all $i$. Clearly $\pi$ is a left $S$-module
epimorphism whose kernel is the left ideal of $S$ generated by $s - 1$.

\begin{definition}
Let $J$ be a graded ideal of $S$ and put $J^P = \pi(J)$. Clearly
$J^P$ is a $\Sigma$-invariant ideal of $P$.
\end{definition}

\begin{proposition}
\label{genS2P}
Let $J\subset S$ be a graded ideal. If $G$ is a homogeneous basis of $J$
then $G^P = \pi(G)$ is a $\Sigma$-basis of $J^P$.
\end{proposition}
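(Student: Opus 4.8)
The plan is to deduce this from the already-established fact (Proposition \ref{gen2lgen}) that a homogeneous set $G$ generates the graded ideal $J$ if and only if $\Sigma\,G\,\Sigma$ generates $J$ as a $P$-module, together with the compatibility of $\pi$ with the left $S$-module structure and with the action of $\Sigma$. Concretely, we must show two things: first, that $\Sigma\cdot G^P$ generates $J^P$ as a $P$-ideal; second, that $J^P$ is indeed $\Sigma$-invariant, which is already recorded in the definition preceding the statement, so the substance lies in the generation claim.

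First I would write $G = \{g_i s^{d_i}\}$ with $g_i \in P$ and $d_i = \deg_s(g_i)$, so that $G^P = \pi(G) = \{g_i\}$. By Proposition \ref{gen2lgen}, every element of $J$ is a $P$-linear combination of elements of $\Sigma\,G\,\Sigma$, i.e. of monomials $s^k g_i s^{d_i} s^l = g_i^{s^k} s^{k+d_i+l}$. Applying $\pi$ and using that $\pi$ is $P$-linear with $\pi(s^m) = 1$ for all $m$, the image of such a term is $g_i^{s^k} = s^k \cdot g_i \in \Sigma \cdot G^P$. Hence $\pi$ sends a $P$-generating set of $J$ (as $P$-module) onto the set $\Sigma \cdot G^P$, up to $P$-scalars, so $J^P = \pi(J)$ is generated as a $P$-module — equivalently as a $P$-ideal — by $\Sigma \cdot G^P$. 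That is exactly the statement that $G^P$ is a $\Sigma$-basis of $J^P$.

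The one point requiring a little care — and the closest thing to an obstacle — is making sure the argument runs in the right direction: we are given that $G$ generates $J$, and we want $G^P$ to generate $J^P = \pi(J)$, not the reverse. Since $\pi$ is surjective onto $J^P$ by definition of $J^P$, and any generating set of $J$ maps under a surjective $P$-module homomorphism to a generating set of the image, this is immediate once we identify $\pi(\Sigma\,G\,\Sigma)$ with $P^* \cdot (\Sigma \cdot G^P)$ as computed above. No termination or Noetherianity issues intervene, and the compatibility hypotheses on $\sigma$ (with $\prec$ and with divisibility) are not even needed here; only the multiplication rule $s f = f^s s$ and $\pi(s^i) = 1$ are used. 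I would therefore present the proof in three short sentences: recall $G^P = \pi(G)$; apply $\pi$ to the $P$-module generation statement of Proposition \ref{gen2lgen} using $\pi(g_i^{s^k} s^m) = g_i^{s^k}$; conclude that $\Sigma \cdot G^P$ is a $P$-ideal basis of $J^P$, which by the definition of $\Sigma$-basis is the claim.
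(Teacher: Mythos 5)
Your proof is correct and follows essentially the same route as the paper's: both invoke Proposition~\ref{gen2lgen} to pass from $G$ to a left basis of $J$, observe that $\pi$ is a left $S$-module homomorphism with $\pi(G\,\Sigma)=\pi(G)=G^P$, and conclude. The paper states this in one line at the level of the left $S$-module structure, while you unroll it one step further to $P$-module generators and the explicit computation $\pi(g_i^{s^k}s^m)=g_i^{s^k}=s^k\cdot g_i$, but the substance is identical; your remark that the compatibility hypotheses on $\sigma$ are not used here is also accurate.
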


\begin{proof}
Since the map $\pi$ is a left $S$-module homomorphism,
it is sufficient to note that $G\,\Sigma$ is a left basis of $J$ and
$\pi(G\,\Sigma) = \pi(G) = G^P$.
\end{proof}


We introduce now a grading on the algebra $P$ which is compatible
with action of $\Sigma$. We start extending the structure $(\N,\max,+)$
in the following way.

\begin{definition}
Let $-\infty$ be an element disjoint by $\N$ and put $\hN =
\{-\infty\}\cup\N$. Then, we define a commutative idempotent monoid
$(\hN,\max)$ with identity $-\infty$ that extends $(\N,\max)$ (with
identity 0) by imposing that $\max(-\infty,x) = x$ for any $x\in\hN$.
Moreover, we define a commutative monoid $(\hN,+)$ with identity $-\infty$
extending the monoid $(\N,+)$ by putting $-\infty + x = -\infty$,
for all $x\in\hN$. Since $+$ clearly distributes over $\max$, one has
that $(\hN,\max,+)$ is a commutative idempotent semiring, also known
as commutative dioid or max-plus algebra \cite{GM}.
\end{definition}

Note that if $\sigma^{-\infty}\in\End_K(P)$ is the map such that
$x_i\mapsto 0$ for any $x_i\in X$, then $\hSigma =
\{\sigma^{-\infty}\}\cup\Sigma$ is a commutative monoid isomorphic to
$(\hN,+)$. Denote now $M = \Mon(P)$ the set of monomials of the polynomial
ring $P$. 

\begin{definition}
\label{weight}
A mapping $\w:M\to\hN$ such that for all $m,n\in M$ and $x_i\in X$ one has
\begin{itemize}
\item[(i)] $\w(1) = -\infty$;
\item[(ii)] $\w(m n) = \max(\w(m),\w(n))$;
\item[(iii)] $\w(s\cdot x_i) = 1 + \w(x_i)$.
\end{itemize}
is called a {\em weight function of $P$ endowed with $\sigma$}.
\end{definition}

Note that if $m = x_{i_1}\cdots x_{i_d}\neq 1$ with
$\w(x_{i_1})\leq\ldots\leq\w(x_{i_d})$ then $\w(m) = \w(x_{i_d})$.
Moreover, the condition (iii) implies that $\w(s^i\cdot m) = i + \w(m)$
for all $i\in\hN, m\in M$ and hence $s^i\cdot m = m$ if and only if
$m = 1$ or $i = 0$. We put $M_i = \{m\in M\mid\w(m) = i\}$ for all
$i\in\hN$ and define $P_i\subset P$ the subspace spanned by $M_i$. 
We have that $P_{-\infty} = K$. Clearly $P = \bigoplus_{i\in\hN} P_i$
is a grading of the algebra $P$ defined by the monoid $(\hN,\max)$
by means of the function $\w$. Then, an element $f\in P_i$ is said
{\em $\w$-homogeneous of weight $i$}.

In what follows, {\em we assume} that $P$ is endowed with a weight function.
In fact, such functions are easily to define. Consider for instance
the polynomial ring $P = K[X\times\N]$ and denote $x_i(j)$ each variable
$(x_i,j)\in X\times\N$. Let $\sigma:P\to P$ be the algebra monomorphism
of infinite order such that $\sigma(x_i(j)) = x_i(j+1)$, for all $i,j$.
Clearly $\sigma$ is a monomial map compatible with divisibility in $\Mon(P)$
and many usual monomial orderings on $P$, like lex, degrevlex, etc, are
compatible with $\sigma$. For the algebra $P$ endowed with the map $\sigma$
we can clearly define the weight function $\w(x_i(j)) = j$. In Section 6
we show how to embed the free associative algebra $\KX$ into the skew polynomial
ring defined by $P$ and the monoid $\Sigma = \langle \sigma \rangle$.
Moreover, if we put $x_i(j) = \sigma^j(u_i)$ where $x_i(0) = u_i = u_i(t)$ is
a set of (algebraically independent) univariate functions and $\sigma$ is
the shift operator $u_i(t)\mapsto u_i(t+h)$ then $P = K[X\times\N]$ is
by definition the {\em ring of ordinary difference polynomials} with constant
coefficients in the field $K$ (see \cite{Le}). Such algebra is used
to study systems of (ordinary) difference equations for applications
in combinatorics or discretization of systems of differential
equations.

\begin{definition}
Let $I$ be an ideal of $P$. We call $I$ {\em $\w$-graded} if $I = \sum_i I_i$
with $I_i = I\cap P_i$ for any $i\in\hN$.
\end{definition}

Define now the skew monoid ring $\hS = P * \hSigma$ extending $S = P * \Sigma$
and let $\hpi:\hS\to P$ the left $\hS$-module epimorphism extending $\pi$
that is $s^i\mapsto 1$, for all $i\in\hN$. The existence of a weight function
for $P$ implies that one has also a mapping $\xi:P\to\hS$ such that
$\hpi \xi = id$.

\begin{proposition}
Define $\xi:P\to \hS$ the homogeneous injective map such that 
$f\mapsto \sum_{i\in\hN} f_i s^i$, for all $f = \sum_{i\in\hN} f_i\in P$.
Then $\xi$ is a $\hSigma$-equivariant map.
\end{proposition}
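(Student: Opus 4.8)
The plan is to verify, in order, that $\xi$ is well defined and $K$-linear, that it is homogeneous, that it is injective, and finally — the real content — that it is $\hSigma$-equivariant. Well-definedness is immediate from the direct sum decomposition $P=\bigoplus_{i\in\hN}P_i$: every $f\in P$ has a unique expansion $f=\sum_i f_i$ with $f_i\in P_i$ and $f_i=0$ for all but finitely many $i$, so $\sum_i f_i s^i$ is an unambiguous element of $\hS$, and $K$-linearity follows because the decomposition of $f+g$, resp.\ of $cf$, is computed componentwise. Homogeneity is built in: $\xi$ maps $P_i$ into the summand $P s^i$ of $\hS$, since $\xi(f_i)=f_i s^i$ whenever $f_i\in P_i$. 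For injectivity one may either invoke that $\{s^i\}_{i\in\hN}$ is a free $P$-basis of $\hS$, so $\xi(f)=\sum_i f_i s^i=0$ forces $f_i=0$ for every $i$; or, more cheaply, note that $\hpi\,\xi=\mathrm{id}_P$, because $\hpi$ is $P$-linear with $s^i\mapsto 1$ and hence $\hpi(\xi(f))=\sum_i f_i\hpi(s^i)=\sum_i f_i=f$.

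It remains to prove $\xi(\tau\cdot f)=\tau\cdot\xi(f)$ for all $\tau\in\hSigma$, $f\in P$, where $P$ carries the left $\hS$-module structure coming from the action of the endomorphisms (so $\tau\cdot f=\tau(f)$) and $\hS$ is viewed as a left module over itself. Since $\xi$ and both actions are additive, it suffices to treat separately the elements $\tau=s^j$ with $j\in\N$ and the element $\tau=s^{-\infty}$, which together exhaust $\hSigma$. For $\tau=s^j$ the key point is the grading compatibility $\sigma^j(P_i)\subseteq P_{i+j}$, which is exactly the identity $\w(s^j\cdot m)=j+\w(m)$ noted just after Definition \ref{weight}, applied to the monomials spanning $P_i$. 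Consequently the weight-$(i+j)$ component of $s^j\cdot f=\sum_i\sigma^j(f_i)$ is $\sigma^j(f_i)$, so $\xi(s^j\cdot f)=\sum_i\sigma^j(f_i)\,s^{i+j}$; on the other hand, using the multiplication rule $s^j f_i=(s^j\cdot f_i)s^j=\sigma^j(f_i)s^j$ one gets $s^j\cdot\xi(f)=s^j\sum_i f_i s^i=\sum_i\sigma^j(f_i)\,s^{i+j}$, and the two expressions coincide.

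For $\tau=s^{-\infty}$ one uses that the algebra endomorphism $\sigma^{-\infty}$ sends every variable, hence every monomial $\neq 1$, to $0$, and that its image lies in $K=P_{-\infty}$. Since for $i\in\N$ the component $f_i$ is a $K$-combination of monomials of weight $i\neq-\infty$, none of which is $1$, we get $\sigma^{-\infty}(f_i)=0$, so $\sigma^{-\infty}(f)=\sigma^{-\infty}(f_{-\infty})\in P_{-\infty}$ and therefore $\xi(s^{-\infty}\cdot f)=\sigma^{-\infty}(f)\,s^{-\infty}$. On the other side, $s^{-\infty}s^i=s^{-\infty}$ in $\hS$ for every $i\in\hN$ (because $-\infty+i=-\infty$ in $\hN$), so $s^{-\infty}\cdot\xi(f)=\sum_i\sigma^{-\infty}(f_i)\,s^{-\infty}=\sigma^{-\infty}(f)\,s^{-\infty}$, the same element. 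This finishes the verification.

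I do not expect a genuine obstacle. The two points that need a little care are: first, $\xi$ is only $K$-linear, not $P$-linear, so ``$\hSigma$-equivariant'' must be read here as ``intertwines the $\hSigma$-actions'' rather than as a morphism of $\hSigma$-invariant $P$-modules in the sense of Section 2; and second, the element $s^{-\infty}$ is absorbing, so it collapses the whole weight decomposition onto its $(-\infty)$-component and the case $\tau=s^{-\infty}$ has to be handled separately rather than by the uniform formula used for $\tau=s^j$.
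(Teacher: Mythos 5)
Your argument is correct and rests on the same key fact as the paper's proof, namely the grading compatibility $\sigma^{i}(P_j)\subseteq P_{i+j}$ for all $i,j\in\hN$, together with the multiplication rule $s^i f = \sigma^i(f)s^i$. One small inaccuracy in your closing remark: the case $\tau=s^{-\infty}$ does \emph{not} have to be handled separately. The paper first reduces (by additivity) to $w$-homogeneous $f\in P_j$, where a single line
\[
\xi(s^i\cdot f) \;=\; (s^i\cdot f)\,s^{i+j} \;=\; s^i f\, s^j \;=\; s^i\,\xi(f)
\]
works uniformly for every $i\in\hN$, including $i=-\infty$ (then $i+j=-\infty$ and $s^{-\infty}\cdot f\in P_{-\infty}=K$, consistently with $s^{-\infty}s^j=s^{-\infty}$). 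Your separate treatment is forced only because you keep the full weight decomposition $f=\sum_i f_i$ throughout rather than restricting to a single homogeneous component; reducing to homogeneous elements first makes the argument shorter and uniform.
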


\begin{proof}
For all $i,j\in\hN$ and $f\in P_j$ one has that $s^i\cdot f\in P_{i+j}$
and therefore $\xi(s^i\cdot f) = (s^i\cdot f) s^{i+j} = s^i f s^j = s^i \xi(f)$.
\end{proof}

Let $I\subset P$ be a $\w$-graded $\Sigma$-invariant ideal and consider
$\xi(I)\subset\hS$. Note that if $I\neq P$ then $I_{-\infty} = 0$ and
the set $\xi(I)$ is in fact contained in $S$. Then, to get rid of the symbol
$-\infty$ we restrict ourselves to ideals not containing constants.

\begin{definition}
Let $I\subsetneq P$ be a $\w$-graded $\Sigma$-invariant ideal of $P$.
Denote by $I^S$ the graded (two-sided) ideal of $S$ generated by
$\xi(I)\subset S$. In other words, if we put $G = \xi(\bigcup_{i\geq 0} I_i) =
\{f s^i \mid f\in I_i, i\geq 0\}$ then $I^S$ is the left ideal
generated by $G\,\Sigma = \{f s^j \mid f\in I_i, j\geq i\geq 0\}$
or equivalently $I^S$ has the basis $G\,\Sigma = \Sigma\,G\,\Sigma$ as
$P$-submodule of $S$. We call $I^S$ the {\em skew analogue} of $I$. 
\end{definition}

\begin{proposition}
\label{S2Pcor}
Let $I\subsetneq P$ be a $\w$-graded $\Sigma$-invariant ideal.
Then $I^{SP} = I$, that is there is a {\em bijective correspondence}
between all $\w$-graded $\Sigma$-invariant ideals different from $P$ and
their skew analogues in $S$.
\end{proposition}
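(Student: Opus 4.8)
The plan is to establish the two inclusions $I\subseteq I^{SP}$ and $I^{SP}\subseteq I$ and then read off the bijective correspondence from the resulting identity $I^{SP}=I$. First I would record that, since $I\subsetneq P$ is an ideal, it contains no nonzero constant, so that $I_{-\infty}=I\cap K=0$; hence $\xi(I)\subseteq S$ (and not merely $\xi(I)\subseteq\hS$), and $\pi$ may legitimately be applied to $\xi(I)$ and to $I^S$. Because $\hpi\xi=id$ and $\pi$ is the restriction of $\hpi$ to $S$, one has $\pi(\xi(h))=h$ for every $h\in I$. Since $\xi(I)\subseteq I^S$ and $\pi:S\to P$ is a left $S$-module homomorphism, this already gives $I=\pi(\xi(I))\subseteq\pi(I^S)=I^{SP}$.

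For the reverse inclusion I would use the $P$-module description of $I^S$ supplied by the definition of the skew analogue (which itself rests on Proposition~\ref{gen2lgen} and on the $\Sigma$-invariance of $I$): the set $G\,\Sigma=\Sigma\,G\,\Sigma=\{f s^j\mid f\in I_i,\ j\geq i\geq 0\}$ is a $P$-module generating set of $I^S$, and every member of it has polynomial part $f\in I_i\subseteq I$. Thus an arbitrary element of $I^S$ can be written $\sum_k p_k f_k s^{j_k}$ with $p_k\in P$ and $f_k\in I$. Applying the $P$-linear map $\pi$, which sends $f_k s^{j_k}\mapsto f_k$, yields $\sum_k p_k f_k$, and this lies in $I$ because $I$ is an ideal of $P$. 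Hence $I^{SP}=\pi(I^S)\subseteq I$, and combined with the previous step, $I^{SP}=I$.

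Finally, to obtain the bijective correspondence I would observe that $I\mapsto I^S$ sends the $\w$-graded $\Sigma$-invariant ideals of $P$ different from $P$ onto the set of their skew analogues in $S$ by the very definition of that set, while $J\mapsto J^P$ restricted to this set of skew analogues is a left inverse of $I\mapsto I^S$ by the identity $I^{SP}=I$ just proved; consequently $I\mapsto I^S$ is a bijection onto its image. I do not expect a serious obstacle here. The only two points requiring some care are the vanishing $I_{-\infty}=0$, which keeps everything inside $S$ so that $\pi$ (rather than only $\hpi$) is available, and the fact — guaranteed by Proposition~\ref{gen2lgen} together with the $\Sigma$-invariance of $I$ — that $I^S$ admits a $P$-module generating set all of whose elements have polynomial part already contained in $I$, since this is exactly what forces $\pi(I^S)\subseteq I$.
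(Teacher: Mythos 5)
Your proof is correct and follows the paper's argument almost exactly: both directions are handled by noting that the generators $f s^j$ of $I^S$ project under $\pi$ onto elements of $I$, and that $\xi$ is a right inverse of $\pi$. The only cosmetic difference is that you phrase the inclusion $I^{SP}\subseteq I$ in terms of $P$-linear combinations and the ideal property of $I$, whereas the paper invokes the left $S$-module structure and the $\Sigma$-invariance of $I$; these are the same observation viewed through Proposition~\ref{gen2lgen}.
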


\begin{proof}
Put $J = I^{SP} = \pi(I^S)$. For any $f\in I_i$ and $j\geq i$
we have clearly $\pi(f s^j) = f$. Since the elements $f s^j$ are
a left basis of $I^S$, the ideal $I$ is $\Sigma$-invariant and $\pi$
is a left $S$-module homomorphism, we have that $J\subset I$. 
Moreover, the elements $f\in I_i$ are a basis of $I = \sum_i I_i$
and one has also that $I\subset J$.
\end{proof}

The next propositions need the following lemmas.

\begin{lemma}
\label{divbound}
If $s^k\cdot m$ divides $n$, with $m,n\in M$, then $\w(n) - k\geq \w(m)$.
\end{lemma}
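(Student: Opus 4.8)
The plan is to prove Lemma \ref{divbound} directly from the axioms of a weight function (Definition \ref{weight}) together with the observation about $\w(s^i\cdot m)$ recorded right after that definition. Recall that $s^k\cdot m \mid n$ in $M$ means there is $p\in M$ with $n = p\cdot(s^k\cdot m)$. The key identity to invoke is $\w(s^k\cdot m) = k + \w(m)$, which follows from axiom (iii) by an easy induction on $k$ (and which the text already states). Granting this, the argument is a one-line application of the multiplicativity axiom (ii).

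First I would write $n = p\,(s^k\cdot m)$ for some monomial $p\in M$. Applying axiom (ii), $\w(n) = \max\bigl(\w(p),\,\w(s^k\cdot m)\bigr) \geq \w(s^k\cdot m)$. Then I would substitute $\w(s^k\cdot m) = k + \w(m)$ to get $\w(n) \geq k + \w(m)$, i.e.\ $\w(n) - k \geq \w(m)$, which is exactly the claim. One should be slightly careful about the degenerate cases involving $-\infty$: if $m = 1$ then $\w(m) = -\infty$ and the inequality is trivial since $\w(n) - k \geq -\infty$ always holds in $\hN$ (here $\w(n)-k$ is interpreted in $\hN$, and if $\w(n) = -\infty$ then $n=1$, forcing $s^k\cdot m = 1$, hence $m=1$ by the remark that $s^k\cdot m = 1$ only when $m=1$ or $k=0$, and the inequality again reads $-\infty \geq -\infty$). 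In all remaining cases $k\in\N$ and the arithmetic in $(\hN,+)$ is ordinary integer arithmetic, so the subtraction makes sense.

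There is no real obstacle here; the only thing to get right is the bookkeeping with the semiring $\hN$ and making sure the stated identity $\w(s^k\cdot m) = k+\w(m)$ is legitimately available (it is, being recorded immediately after Definition \ref{weight}, following from axiom (iii) by induction on $k$). So the proof is essentially: expand the divisibility relation, apply axiom (ii), apply the $s$-shift identity. I would present it in two or three sentences.

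\begin{proof}
Write $n = p\,(s^k\cdot m)$ for some $p\in M$. By Definition \ref{weight}(ii) we have $\w(n) = \max(\w(p), \w(s^k\cdot m)) \geq \w(s^k\cdot m)$, and by iterating Definition \ref{weight}(iii) one gets $\w(s^k\cdot m) = k + \w(m)$. Hence $\w(n) \geq k + \w(m)$, that is $\w(n) - k \geq \w(m)$ (the inequality being read in $\hN$, and being trivial when $m = 1$).
\end{proof}
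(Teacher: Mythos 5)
Your proof is correct and is essentially identical to the paper's: write $n = p\,(s^k\cdot m)$, apply axiom (ii) to get $\w(n) \geq \w(s^k\cdot m)$, then use $\w(s^k\cdot m) = k + \w(m)$. The extra remarks on the $-\infty$ bookkeeping are fine but not needed beyond what the paper already records after Definition \ref{weight}.
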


\begin{proof}
Since $n = q (s^k\cdot m)$ with $q\in M$, we have $\w(n)\geq
\w(s^k\cdot m) = k + \w(m)$.
\end{proof}

\begin{lemma}
\label{lcmbound}
Let $m,n\in M$ and put $l = \lcm(m,n)$. Then, one has that $\w(l) =
\max(\w(m),\w(n))$.
\end{lemma}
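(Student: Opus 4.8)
The plan is to derive the identity from the three defining properties of the weight function in Definition \ref{weight}, the main tool being monotonicity of $\w$ with respect to divisibility in $M$. First I would record this monotonicity: if $u\mid v$ with $u,v\in M$, write $v=uw$ with $w\in M$, so that property (ii) gives $\w(v)=\max(\w(u),\w(w))\geq\w(u)$. Applying this to $m\mid l$ and to $n\mid l$ immediately yields $\w(l)\geq\w(m)$ and $\w(l)\geq\w(n)$, hence $\w(l)\geq\max(\w(m),\w(n))$.

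For the reverse inequality I would use the factorization $l=m\cdot(l/m)$ together with the standard relation $\lcm(m,n)\gcd(m,n)=mn$, which shows $l/m=\lcm(m,n)/m=n/\gcd(m,n)$ and in particular that $l/m$ divides $n$. By the monotonicity just established, $\w(l/m)\leq\w(n)$, and then property (ii) gives $\w(l)=\max(\w(m),\w(l/m))\leq\max(\w(m),\w(n))$. Combining the two inequalities gives the claimed equality $\w(l)=\max(\w(m),\w(n))$.

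Alternatively, the whole statement follows at once from the observation recorded just after Definition \ref{weight}, namely that $\w(p)$ equals the maximum of the weights of the variables occurring in $p$ (with the convention that the empty maximum is $-\infty$): the set of variables occurring in $l=\lcm(m,n)$ is exactly the union of those occurring in $m$ and those occurring in $n$, so $\w(l)=\max(\w(m),\w(n))$. I expect no real obstacle here; the only point requiring a moment's care is the reverse inequality, where one must invoke $\lcm(m,n)\gcd(m,n)=mn$ to see that $l/m$ divides $n$.
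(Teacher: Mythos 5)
Your proof is correct. The primary argument you give — establishing monotonicity of $\w$ with respect to divisibility via property (ii), then using $\lcm(m,n)\gcd(m,n)=mn$ to show $l/m\mid n$ and hence $\w(l)=\max(\w(m),\w(l/m))\leq\max(\w(m),\w(n))$ — is a genuinely different and more explicit route than the paper's, which simply observes (in one line) that idempotency of $\max$ in property (ii) means $\w$ depends only on the \emph{set} of variables in the support, and that the support of $\lcm(m,n)$ is the union of the supports of $m$ and $n$. Your divisibility argument is more elementary in that it never appeals to the description of $\w$ as a max over the support; the paper's argument is shorter and makes the ``why'' transparent. Your closing paragraph correctly identifies the paper's approach as an alternative, so you clearly see both.
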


\begin{proof}
By property (ii) of Definition \ref{weight}, it is sufficient to note that
$\max$ is an idempotent operation and hence the weight of a monomial
depends only on the variables occurring in the support.
\end{proof}

\begin{proposition}
Let $I\subsetneq P$ be a $\w$-graded $\Sigma$-invariant ideal,
then $I^S$ is a graded ideal of $S$. Let $G = \bigcup_i G_i$
be a {\em $\w$-homogeneous} $\Sigma$-basis of $I$ that is
$G_i\subset I_i$. Then $G^S = \xi(G) = \{f s^i \mid f\in G_i,i\geq 0\}$
is an $s$-homogeneous basis of $I^S$.
\end{proposition}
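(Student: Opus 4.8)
The plan is to verify the two assertions separately, beginning with the claim that $I^S$ is a graded ideal of $S$. By definition $I^S$ is the left ideal generated by $G\,\Sigma$ where $G = \{f s^i \mid f\in I_i, i\geq 0\}$, and each generator $f s^i$ is $s$-homogeneous of $s$-degree $i$. Since $\Sigma$-translates of $s$-homogeneous elements remain $s$-homogeneous and the left ideal generated by a set of $s$-homogeneous elements is $s$-homogeneous (the $S$-grading over $\Sigma$ satisfies $S_\sigma S_\tau \subset S_{\sigma\tau}$, as recorded before Proposition \ref{gen2lgen}), the left ideal $I^S$ is graded. By Corollary \ref{poldiv2ldiv} and Proposition \ref{gen2lgen}, a left ideal generated by $G\,\Sigma$ with $G$ consisting of $s$-homogeneous elements coincides with the two-sided ideal generated by $G$; thus $I^S$ is automatically a graded two-sided ideal, which matches the earlier definition of the skew analogue.

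For the second assertion, let $G = \bigcup_i G_i$ be a $\w$-homogeneous $\Sigma$-basis of $I$, so $G_i \subset I_i$ and $\Sigma\cdot G$ is a $P$-ideal basis of $I$. I must show that $G^S = \xi(G) = \{f s^i \mid f\in G_i, i\geq 0\}$ is an $s$-homogeneous basis of $I^S$. That each $f s^i \in G^S$ is $s$-homogeneous is immediate, and $G^S \subset I^S$ holds because $f\in G_i\subset I_i$ gives $f s^i = \xi(f)$ with $\xi(f)$ a generator of $I^S$ by definition. The substantive point is that $G^S$ generates $I^S$. First I would observe that since $\Sigma\cdot G$ generates $I$ as $P$-ideal and $G$ is $\w$-homogeneous while $I$ is $\w$-graded, each homogeneous piece $I_j$ is spanned over $K$ by monomial multiples of $\w$-homogeneous translates $s^k\cdot g$ with $g\in G_i$, $k\geq 0$; the key numerical constraint, supplied by Lemma \ref{divbound} and Lemma \ref{lcmbound}, is that if $q\,(s^k\cdot g)$ lands in weight $j$ then $k + i \leq j$, so $s^k\cdot g$ belongs to weight $k+i \leq j$. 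Applying $\xi$ and using that $\xi$ is $\hSigma$-equivariant, $\xi(q\,(s^k\cdot g)) = q \cdot \xi(s^k\cdot g) = q\, s^k\, (g s^i)\, s^{\,j - k - i}$, which is a two-sided multiple of the generator $g s^i \in G^S$. Summing over the $K$-span of $I_j$ shows every element of $\xi(I)$, hence every generator of $I^S$, lies in the two-sided (equivalently left) ideal generated by $G^S$; the reverse containment is clear since $\xi(g) = g s^i \in \xi(I) \subset I^S$ is already known. Therefore $G^S$ generates $I^S$.

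I would then package the generating claim into "basis" form by appealing to Proposition \ref{gen2lgen}: since $I^S$ is a graded two-sided ideal and $G^S$ is a generating set of $s$-homogeneous elements, $\Sigma\,G^S\,\Sigma$ is automatically a $P$-module basis of $I^S$, i.e. $G^S$ is an $s$-homogeneous basis of $I^S$ in the sense used earlier. The main obstacle I anticipate is bookkeeping the weights carefully in the previous step: one must be sure that when an element of $I_j$ is written as a $K$-combination of monomial multiples of $\sigma$-translates of the $g\in G$, the translation exponent $k$ does not exceed what the weight identities $\w(s^k\cdot m) = k + \w(m)$ and $\w(\lcm(m,n)) = \max(\w(m),\w(n))$ permit, so that the surplus exponent $j - k - i$ appearing in $\xi$ is genuinely non-negative and the expression is a legitimate element of $S$ (not of $\hS$). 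Everything else is a routine transfer between the three equivalent descriptions — two-sided ideal, left ideal generated by $\Sigma$-translates, and $P$-submodule — already established in Sections 2 and 3.
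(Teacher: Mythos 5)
Your proof follows essentially the same route as the paper: express elements of $I$ as $P$-combinations of $\Sigma$-translates of $G$, extract the numerical weight constraint $k + i \leq j$, and lift into $S$ by tracking $s$-degrees. The paper works at the level of polynomial coefficients $f_k$ and with the left generators $f s^j$ ($f\in I_i$, $j\geq i$), whereas you work with monomial multiples $q\,(s^k\cdot g)$ and with the set $\xi(I)$, but these are equivalent descriptions of the same generating data. One small but genuine slip: the equality $\xi\bigl(q\,(s^k\cdot g)\bigr) = q\cdot\xi(s^k\cdot g)$ is false in general. The map $\xi$ is $\hSigma$-equivariant but \emph{not} $P$-linear; indeed $\xi\bigl(q\,(s^k\cdot g)\bigr) = q\,(s^k\cdot g)\,s^j$ with $j = \max(\w(q),\,k+i)$, while $q\cdot\xi(s^k\cdot g) = q\,(s^k\cdot g)\,s^{k+i}$, and these agree only when $\w(q)\leq k+i$. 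Fortunately your target identity $\xi\bigl(q\,(s^k\cdot g)\bigr) = q\,s^k\,(g s^i)\,s^{\,j-k-i}$ is correct — it should simply be computed directly from $\xi(p) = p\,s^{\w(p)}$ together with $(s^k\cdot g)\,s^j = s^k g\,s^{j-k}$, without passing through the spurious $P$-linearity step. With that repair the argument is sound, and the bookkeeping you flag as the main hazard (non-negativity of $j - k - i$) is correctly resolved by the weight bound $k+i\leq j$.
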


\begin{proof}
Consider the elements $f s^j$ with $f\in I_i,j\geq i$ which form
a left basis of $I^S$. Since $G$ is a $\Sigma$-basis, one has
$f = \sum_k f_k (s^k\cdot g_k)$ with $f_k\in P, g_k\in G_{i_k}$.
From $\w(f) = i$, by Lemma \ref{divbound} we obtain that
$i - k\geq i_k$. We have therefore that $f s^j =
\sum_k f_k (s^k\cdot g_k) s^k s^{j-k} =
\sum_k f_k s^k (g_k s^{j-k})$ with $j - k\geq i - k\geq i_k$
and hence $g_k s^{j-k}\in G^S\,\Sigma$, for all $k$.
\end{proof}

Note now that by Proposition \ref{toS2P} we have that $m s^i\prec n s^i$
if and only if $m\prec n$, for all $m,n\in M$ and for any $i\geq 0$.
In other words, if $f s^i$ ($f\in P$) is an $s$-homogeneous element
of $S$ then $\lm(f s^i) = \lm(f) s^i$.

\begin{lemma}
\label{gbgenIS}
Let $I\subsetneq P$ be a $\w$-graded $\Sigma$-invariant ideal.
If $G = \bigcup_i I_i$, by definition $I^S$ is the graded ideal
of $S$ generated by $G^S = \xi(G)$. Then $G^S$ is an $s$-homogeneous
\Gr\ basis of $I^S$.
\end{lemma}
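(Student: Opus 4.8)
The plan is to verify the two-sided $\Sigma$-criterion (Proposition~\ref{sigmacrit}) for the $s$-homogeneous generating set $G^S=\{f s^i\mid f\in I_i,\ i\ge 0\}$ of the graded ideal $I^S$, whose $P$-basis is $\Sigma\,G^S\,\Sigma=G^S\,\Sigma=\{f s^j\mid f\in I_i,\ j\ge i\ge 0\}$. Thus I must show that, for all nonzero $f,g\in G^S$, the S-polynomials $\spoly(f,s^i g s^j)$ $(\deg_s(f)=\deg_s(g)+i+j)$ and $\spoly(f s^i,s^j g)$ $(\deg_s(f)+i=\deg_s(g)+j)$ reduce to $0$ modulo $\Sigma\,G^S\,\Sigma$. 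Writing $f=f' s^a$, $g=g' s^b$ with $f'\in I_a$, $g'\in I_b$, and using $s^i g s^j=\sigma^i(g')\,s^{i+b+j}$ and $s^j g=\sigma^j(g')\,s^{j+b}$, in either case both arguments of the S-polynomial become $s$-homogeneous of one common $s$-degree $c$ (with $c=a$ in the first case and $c=a+i=b+j$ in the second), so that $\spoly(f,s^i g s^j)=\spoly\bigl(f',\sigma^i(g')\bigr)\,s^c$ and $\spoly(f s^i,s^j g)=\spoly\bigl(f',\sigma^j(g')\bigr)\,s^c$. Since the reducers of $s$-degree $c$ inside $\Sigma\,G^S\,\Sigma$ are exactly $\{u s^c\mid u\in\bigcup_{0\le k\le c} I_k\}$, reducing such an element modulo $\Sigma\,G^S\,\Sigma$ is the same as reducing its $P$-coefficient modulo $\bigcup_{0\le k\le c} I_k$ inside $P$.

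The core of the argument is then the following reduction fact: \textbf{if $h\in I$ and every monomial occurring in $h$ has $\w$-weight at most $c$, then $\Reduce\bigl(h,\bigcup_{0\le k\le c} I_k\bigr)=0$.} To prove it, decompose $h=\sum_{k\le c} h_k$ into its $\w$-homogeneous components; since $I$ is $\w$-graded, each $h_k\in I_k$, hence each $h_k\in\bigcup_{0\le k\le c}I_k$. As the supports of distinct $h_k$ are disjoint, $\lm(h)=\lm(h_W)$ for the index $W\le c$ maximizing $\lm(h_W)$, and $\lc(h)=\lc(h_W)$; so one reduction step by $h_W$ replaces $h$ by $h-h_W=\sum_{k\ne W}h_k$, which again lies in $I$, still has all monomials of $\w$-weight at most $c$, and has strictly smaller leading monomial. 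Because $\prec$ is a well-ordering, iterating terminates at $0$, which gives the claimed \Gr\ representation.

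It then remains to check that $\spoly(f',\sigma^i(g'))$ and $\spoly(f',\sigma^j(g'))$ satisfy the weight hypothesis. In the first family $f'\in I_a$ with $a\le c$ and $\sigma^i(g')\in I_{b+i}$ with $b+i\le c$; in the second, $f'\in I_a$ with $a\le c$ and $\sigma^j(g')\in I_{b+j}=I_c$ — in all cases two $\w$-homogeneous elements $p,q\in I$ of weight $\le c$. Setting $l=\lcm(\lm(p),\lm(q))$, Lemma~\ref{lcmbound} gives $\w(l)=\max(\w(\lm(p)),\w(\lm(q)))\le c$, so every monomial of $(l/\lm(p))\,p$ and of $(l/\lm(q))\,q$ — being the product of a divisor of $l$ with a monomial of $p$ or $q$ — has $\w$-weight $\le c$; hence $\spoly(p,q)\in I$ has all its monomials of $\w$-weight $\le c$. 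Applying the reduction fact, $\spoly(f',\sigma^i(g'))$ and $\spoly(f',\sigma^j(g'))$ reduce to $0$ modulo $\bigcup_{0\le k\le c} I_k$, so by the first paragraph the corresponding S-polynomials in $S$ reduce to $0$ modulo $\Sigma\,G^S\,\Sigma$, and Proposition~\ref{sigmacrit} yields that $G^S$ is a \Gr\ basis of $I^S$.

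The step needing care is the weight bound in the last paragraph: forming least common multiples and multiplying by cofactors must not raise the weight above $c$, which rests on the weight of a monomial depending only on the variables in its support (Lemma~\ref{lcmbound}, idempotency of $\max$) together with $\sigma$ shifting weights additively. Everything else is bookkeeping with $s$-degrees and the elementary termination argument in the middle paragraph.
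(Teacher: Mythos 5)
Your proof is correct, and it reaches the conclusion by a route that is genuinely different from the paper's in two places. First, you invoke the two-sided $\Sigma$-criterion (Proposition~\ref{sigmacrit}) directly on $G^S$, whereas the paper applies the left $\Sigma$-criterion (Proposition~\ref{leftsigmacrit}) to the left basis $G^S\,\Sigma$; unwinding the definitions these lead to the same family of S-polynomials, so this is a cosmetic difference. The substantive difference is how the S-polynomial is shown to admit a \Gr\ representation. After observing that the reducers of $s$-degree $c$ in $\Sigma\,G^S\,\Sigma$ have $P$-coefficients exactly $\bigcup_{k\le c}I_k$, you exhibit a \Gr\ representation \emph{explicitly}: the S-polynomial $\spoly(p,q)$ of two $\w$-homogeneous elements $p,q\in I$ of weight $\le c$ has, by Lemma~\ref{lcmbound}, all monomials of weight $\le c$, and since $I$ is $\w$-graded its $\w$-homogeneous components $h_k$ lie in $I_k\subset\bigcup_{k\le c}I_k$; because these components have pairwise disjoint monomial supports, $\spoly(p,q)=\sum_k h_k$ is already a \Gr\ representation with $\lm(h_k)\preceq\lm(\spoly(p,q))$. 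The paper instead asserts that $G=\bigcup_i I_i$ is a \Gr\ $\Sigma$-basis of $I$, takes some \Gr\ representation $\sum_l f_l(s^l\cdot g_l)$ arising from reduction, and then uses Lemma~\ref{divbound} together with Lemma~\ref{lcmbound} to bound the shifts $l+\w(g_l)$ by $i$ so that the representation lifts into $S$. Your decomposition into $\w$-homogeneous pieces makes the weight control manifest and avoids Lemma~\ref{divbound} and the implicit bookkeeping of how weights evolve along a reduction sequence; both arguments ultimately rest on Lemma~\ref{lcmbound}, i.e.\ idempotency of $\max$.
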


\begin{proof}
Let $f s^i, g s^j\in G^S\,\Sigma$ that is the $\w$-homogeneous elements
$f,g\in G$ are such that $i\geq \w(f), j\geq \w(g)$. Assume $i\geq j$
and put $k = i - j$. By Proposition \ref{leftsigmacrit} we have to check for
\Gr\ representations of the S-polynomial $\spoly(f s^i, s^k g s^j)$
with respect to $\Sigma\,G^S\,\Sigma$. Since $G$ is clearly a \Gr\
$\Sigma$-basis of $I$, one has that the S-polynomial  $\spoly(f, s^k\cdot g)$
has a \Gr\ representation with respect to $\Sigma\cdot G$, say $h =
\spoly(f, s^k\cdot g) = \sum_l f_l (s^l\cdot g_l)$ with $f_l\in P,g_l\in G$.
Note that $\spoly(f s^i, s^k g s^j) = h s^i = \sum_l f_l (s^l\cdot g_l) s^i$.
We have to prove now that $i\geq l + \w(g_l)$ for any $l$, because
in this case one has the \Gr\ representation $h s^i =
\sum_l f_l s^l (g_l s^{i-l})$. In fact, by Lemma \ref{divbound}
and Lemma \ref{lcmbound} we have that $\max(\w(f),\w(g)) =
\w(h)\geq l + \w(g_l)$. Then, from $i\geq \w(f)$ and $i\geq j\geq \w(g)$
one obtains the claim.
\end{proof}

\begin{proposition}
Let $G\subset \bigcup_{i\geq 0} P_i$. Then $\lm(G)^S = \lm(G^S)$. Moreover,
if $I\subsetneq P$ is a $\w$-graded $\Sigma$-invariant ideal then
$\LM_P(I)^S = \LM(I^S)$.
\end{proposition}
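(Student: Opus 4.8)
The plan is to obtain the second equality from the first, so I start with $\lm(G)^S=\lm(G^S)$. Every $g\in G$ lies in some $P_i$ with $i\ge 0$, hence is $\w$-homogeneous of weight $\w(g)=i$; in particular $g$ is not a nonzero constant, so $\lm(g)\ne 1$, and every monomial of $g$ — $\lm(g)$ among them — has weight $\w(g)$. Thus $g^S=\xi(g)=g\,s^{\w(g)}$, and by the observation recorded just above (namely $\lm(f\,s^k)=\lm(f)\,s^k$ for any $f\in P$ and $k\ge 0$) one gets $\lm(g^S)=\lm(g)\,s^{\w(g)}=\xi(\lm(g))=\lm(g)^S$. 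Taking the union over the nonzero $g\in G$ gives $\lm(G^S)=\lm(G)^S$.

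For the second part I would first isolate a general fact: if $L\subsetneq P$ is a $\w$-graded $\Sigma$-invariant \emph{monomial} ideal and $B\subseteq\Mon(P)$ is any monomial generating set of $L$, then the skew analogue $L^S$ equals the two-sided ideal $\langle\, m\,s^{\w(m)} : m\in B \,\rangle$ of $S$. The inclusion $\supseteq$ is immediate: for $m\in B\subseteq L$ we have $m\,s^{\w(m)}=\xi(m)\in\xi(L_{\w(m)})\subseteq L^S$. For $\subseteq$ it suffices to check that $\xi(f)$ lies in that two-sided ideal for every $\w$-homogeneous $0\ne f\in L_i$; writing $f$ as a $K$-combination of weight-$i$ monomials $m$ of $L$ and factoring each as $m=q\,b$ with $q\in\Mon(P)$, $b\in B$, one has $\w(m)=i$ and $i-\w(b)=\w(m)-\w(b)\ge 0$ by Lemma~\ref{divbound}, whence $m\,s^i=q\,\bigl(b\,s^{\w(b)}\bigr)\,s^{\,i-\w(b)}$ exhibits $\xi(f)=f\,s^i$ inside the ideal. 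The only point needing care here is this weight inequality $\w(m)\ge\w(b)$ for $b\mid m$, which is precisely Lemma~\ref{divbound}.

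Finally I would put the pieces together. The ideal $L:=\LM_P(I)$ is a monomial ideal of $P$; it is proper because $I\subsetneq P$ is $\w$-graded, so $1\notin\lm(I)$; it is $\w$-graded because monomial ideals always are; and it is $\Sigma$-invariant as recalled earlier — so $\LM_P(I)^S$ is defined, and the general fact with $B=\lm(I)$ yields $\LM_P(I)^S=\langle\, m\,s^{\w(m)} : m\in\lm(I)\,\rangle=\langle\lm(I)^S\rangle$. On the other hand, with $G=\bigcup_i I_i$, Lemma~\ref{gbgenIS} says $G^S$ is an $s$-homogeneous \Gr\ basis of $I^S$, so $\LM(I^S)=\langle\lm(G^S)\rangle$; and by the first part $\lm(G^S)=\lm(G)^S=\lm(I)^S$, using $\lm(\bigcup_i I_i)=\lm(I)$ — which holds because every $\w$-homogeneous component of $f\in I$ again lies in $I$ and the component carrying $\lm(f)$ has leading monomial $\lm(f)$. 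Hence $\LM(I^S)=\langle\lm(I)^S\rangle=\LM_P(I)^S$. I do not expect a genuine obstacle: beyond the weight bookkeeping via Lemma~\ref{divbound}, the one thing to be careful about is verifying that $\LM_P(I)$ meets the hypotheses under which the skew analogue is defined.
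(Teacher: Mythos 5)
Your proof is correct and follows the same overall strategy as the paper: establish $\lm(G)^S=\lm(G^S)$ by the $\w$-homogeneity of each $g\in G$, take $G=\bigcup_i I_i$ so that $\lm(G)=\lm(I)$, and then identify $\LM_P(I)^S$ and $\LM(I^S)$ as the same ideal generated by $\lm(G^S)$, invoking Lemma~\ref{gbgenIS} on the right-hand side. The one place you and the paper diverge is in how $\LM_P(I)^S=\langle\lm(I)^S\rangle$ is justified: the paper leaves this to a terse ``hence'' (implicitly appealing to the earlier proposition that a $\w$-homogeneous $\Sigma$-basis $H$ of a $\w$-graded ideal $L\subsetneq P$ gives a basis $H^S=\xi(H)$ of $L^S$, applied to $L=\LM_P(I)$ with the $\Sigma$-basis $\lm(I)$), whereas you prove a self-contained lemma about skew analogues of monomial ideals, reducing each $n\,s^i$ with $n=qb$, $b\in B$, to $q\,(b\,s^{\w(b)})\,s^{\,i-\w(b)}$ via the weight inequality of Lemma~\ref{divbound}. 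The two routes are logically interchangeable; yours is more elementary and explicit, the paper's is shorter by reusing machinery already in place.
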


\begin{proof}
If $f\in P_i$ is a $\w$-homogeneous element then $\w(\lm(f)) = \w(f) = i$ and
$\lm(f) s^i = \lm(f s^i)$. We obtain that $\lm(G)^S = \lm(G^S)$. Consider now
$G = \bigcup_i I_i$. By definition $I^S$ is the ideal of $S$ generated
by $G^S$. Moreover, since $I = \sum_i I_i$ one has that $\lm(G) = \lm(I)$
and hence $\LM_P(I)^S$ is the ideal generated by $\lm(G)^S = \lm(G^S)$.
Finally, by Lemma \ref{gbgenIS} one has that $\LM(I^S)$ is the ideal
of $S$ generated by $\lm(G^S)$.
\end{proof}

\begin{proposition}
\label{gbP2S}
Let $I\subsetneq P$ be a $\w$-graded $\Sigma$-invariant ideal.
Let $G = \bigcup_i G_i$ be a $\w$-homogeneous \Gr\ $\Sigma$-basis of $I$.
Then, $G^S = \xi(G)$ is an $s$-homogeneous \Gr\ basis of $I^S$.
\end{proposition}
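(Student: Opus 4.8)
The plan is to deduce this from the two propositions immediately preceding it, exactly as one deduces a statement about Gröbner bases from a statement about generating sets together with an identification of leading monomials. First I would invoke the preceding proposition to the effect that $\LM_P(I)^S = \LM(I^S)$: this reduces the problem to showing that $\lm(G^S)$ generates $\LM(I^S)$ as a two-sided ideal of $S$. Since $G$ is a \Gr\ $\Sigma$-basis of $I$, by definition $\lm(G)$ is a basis of $\LM_P(I)$ as a left $S$-module, i.e.\ $\Sigma\cdot\lm(G)$ generates $\LM_P(I)$ as a $P$-ideal; and because $G=\bigcup_i G_i$ is $\w$-homogeneous with $G_i\subset I_i$, each $\lm(f)$ for $f\in G_i$ lies in $P_i$, so $\lm(G)$ is itself $\w$-homogeneous and $\LM_P(I)$ is a $\w$-graded $\Sigma$-invariant ideal. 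Applying the preceding proposition again (now to the ideal $\LM_P(I)$ in place of $I$, with the monomial basis $\bigcup_i \LM_P(I)_i$), one gets $\LM_P(I)^S = \LM(\lm(G)^S) = \LM(\lm(G^S))$, using that $\lm(G)^S = \lm(G^S)$ from the same proposition.

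Assembling these equalities gives $\LM(I^S) = \LM_P(I)^S = \LM(\lm(G^S))$, which is precisely the statement that $\lm(G^S)$ generates the same two-sided ideal as $\lm(I^S)$, i.e.\ $G^S$ is a \Gr\ basis of $I^S$ — provided we also know $G^S\subset I^S$ and $G^S$ is an $s$-homogeneous basis of $I^S$, which is exactly the content of the proposition two statements back (applied with the present $\w$-homogeneous $\Sigma$-basis $G$). So the argument is a short chain: $G^S$ is a basis of $I^S$, $G^S$ consists of $s$-homogeneous elements, and $\LM(G^S)=\LM(I^S)$.

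An alternative, more self-contained route — which I would probably present as the actual proof since it parallels Lemma~\ref{gbgenIS} — is to verify the Buchberger-type criterion of Proposition~\ref{leftsigmacrit} directly. Take $f s^i, g s^j\in G^S\,\Sigma$ with $f,g\in G$, $i\ge\w(f)$, $j\ge\w(g)$, say $i\ge j$, and set $k=i-j$. Since $G$ is a \Gr\ $\Sigma$-basis of $I$, Proposition~\ref{Psigmacrit} gives a \Gr\ representation $\spoly(f,s^k\cdot g) = \sum_l f_l\,(s^l\cdot g_l)$ with $f_l\in P$, $g_l\in G$, and $\lm(f_l)(s^l\cdot\lm(g_l))\preceq\lm(\spoly(f,s^k\cdot g))$. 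Multiplying on the right by $s^i$ and using $\spoly(f s^i, s^k g s^j) = \spoly(f,s^k\cdot g)\,s^i$ together with $(s^l\cdot g_l)s^i = s^l(g_l s^{i-l})$, one needs $i\ge l+\w(g_l)$ to conclude $g_l s^{i-l}\in G^S\,\Sigma$ and hence obtain a \Gr\ representation of the S-polynomial with respect to $\Sigma\,G^S\,\Sigma$. This inequality is the one nontrivial point: by Lemma~\ref{divbound} and Lemma~\ref{lcmbound}, $l+\w(g_l)\le\w\bigl(\lm(f_l)(s^l\cdot\lm(g_l))\bigr)\le\w(\spoly(f,s^k\cdot g))\le\w(\lcm(\lm(f),s^k\cdot\lm(g))) = \max(\w(f), k+\w(g))$, and $\max(\w(f),k+\w(g))\le\max(i, k+j) = \max(i,i) = i$. (One also handles the symmetric case $j\ge i$ identically, or simply notes $\spoly$ is antisymmetric.)

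The main obstacle, such as it is, is bookkeeping with the weight function: one must be careful that the weights do not increase along a \Gr\ representation of an S-polynomial, which is exactly why the lemmas $\ref{divbound}$ and $\ref{lcmbound}$ were isolated. Everything else — that $G^S$ generates $I^S$, that $s$-homogeneity is preserved, that the leading-monomial sets match up under $\xi$ — is routine and already packaged in the surrounding propositions; so I would keep the write-up to essentially the weight estimate above, citing Lemma~\ref{gbgenIS} as the template and Proposition~\ref{leftsigmacrit} (or Proposition~\ref{sigmacrit}) for the criterion itself.
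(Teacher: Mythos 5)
Your first sketch is essentially the paper's own proof: invoke the preceding proposition for $\LM_P(I)^S = \LM(I^S)$ and $\lm(G)^S=\lm(G^S)$, observe that $\lm(G)$ is a $\w$-homogeneous $\Sigma$-basis of the $\w$-graded $\Sigma$-invariant monomial ideal $\LM_P(I)$, and apply the earlier proposition (the one about $G^S$ being an $s$-homogeneous basis of $I^S$, not the one "immediately preceding" as you phrase it at one point) to $(\LM_P(I),\lm(G))$ to conclude that $\lm(G^S)$ generates $\LM_P(I)^S=\LM(I^S)$. Your second, more self-contained route via Proposition~\ref{leftsigmacrit} and the weight bound from Lemmas~\ref{divbound}--\ref{lcmbound} is a genuine alternative: it reproduces the mechanism of Lemma~\ref{gbgenIS} for a general \Gr\ $\Sigma$-basis rather than for $G=\bigcup_i I_i$, at the cost of repeating the weight bookkeeping that the paper factors out into the surrounding propositions. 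Both arguments work; the one implicit point in your second route (that the \Gr\ representation of $\spoly(f,s^k\cdot g)$ can be taken $\w$-homogeneous, so that $\prec$-bounds translate into weight bounds) deserves a word, but the paper leaves the same point tacit in Lemma~\ref{gbgenIS}.
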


\begin{proof}
By hypothesis $\lm(G)$ is a $\Sigma$-basis of $\LM_P(I)$.
Then $\lm(G^S) = \lm(G)^S$ is a basis of $\LM_P(I)^S = \LM(I^S)$
that is $G^S$ is a \Gr\ basis of $I^S$.
\end{proof}

\begin{proposition}
\label{gbS2P}
Let $I\subsetneq P$ be a $\w$-graded $\Sigma$-invariant ideal.
If $G$ is an $s$-homogeneous \Gr\ basis of $I^S$ then $G^P = \pi(G)$
is a \Gr\ $\Sigma$-basis of $I$.
\end{proposition}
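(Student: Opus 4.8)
The plan is to derive Proposition~\ref{gbS2P} as the ``dual'' of Proposition~\ref{gbP2S}: where the latter pushes a \Gr\ $\Sigma$-basis forward through $\xi$ to obtain a \Gr\ basis in $S$, here I would transport a \Gr\ basis of $I^S$ backward through the projection $\pi$, using the bijective correspondence of Proposition~\ref{S2Pcor} together with the already established identity $\LM_P(I)^S = \LM(I^S)$ and the fact that $\pi$ carries homogeneous bases to $\Sigma$-bases.

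First I would note that, $G$ being an $s$-homogeneous \Gr\ basis of $I^S$, the set $\lm(G)$ consists of $s$-homogeneous monomials and generates the graded two-sided ideal $\LM(G) = \LM(I^S)$, which by the identity established just before Proposition~\ref{gbP2S} equals $\LM_P(I)^S$. Thus $\lm(G)$ is a homogeneous basis of the graded ideal $\LM_P(I)^S$, and applying Proposition~\ref{genS2P} to this ideal with that basis shows $\pi(\lm(G))$ is a $\Sigma$-basis of $(\LM_P(I)^S)^P = \pi(\LM_P(I)^S)$. Now $\LM_P(I)$ is a $\Sigma$-invariant monomial ideal — hence automatically $\w$-graded — and is properly contained in $P$ since $I\subsetneq P$, so Proposition~\ref{S2Pcor} applies and gives $\pi(\LM_P(I)^S) = (\LM_P(I))^{SP} = \LM_P(I)$. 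Finally, for an $s$-homogeneous $g\in G$ one has $g = \pi(g)\,s^{\deg_s(g)}$ with $\pi(g)\in P$, whence $\lm(g) = \lm(\pi(g))\,s^{\deg_s(g)}$ and $\pi(\lm(g)) = \lm(\pi(g))$; therefore $\pi(\lm(G)) = \lm(\pi(G)) = \lm(G^P)$. Putting these together, $\lm(G^P)$ is a basis of $\LM_P(I)$ as a left $S$-module, which — together with the obvious inclusion $G^P = \pi(G)\subset\pi(I^S) = I^{SP} = I$ — is exactly the statement that $G^P$ is a \Gr\ $\Sigma$-basis of $I$.

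I do not expect a genuine obstacle here: the substantive work is already packaged in Proposition~\ref{S2Pcor}, in the equality $\LM_P(I)^S = \LM(I^S)$ (whose proof rests on Lemma~\ref{gbgenIS} and the $\Sigma$-criteria), and in Proposition~\ref{genS2P}. The only point requiring a little care is the commutation $\pi\circ\lm = \lm\circ\pi$ on $s$-homogeneous elements, which relies on the identity $\lm(f s^i) = \lm(f) s^i$ noted just before Lemma~\ref{gbgenIS} and ultimately on compatibility of $\sigma$ with the monomial ordering.
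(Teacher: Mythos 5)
Your proof is correct, and it takes a route that is genuinely different from the paper's. The paper argues directly at the level of elements: it takes a $\w$-homogeneous $f\in I_l$, lifts it to $f s^l\in I^S$, invokes the \Gr\ property of $G$ to produce $g s^k\in G$ with $\lm(f)s^l = q\,s^i\,\lm(g s^k)\,s^j$, and then reads off $\lm(f) = q\,\lm(s^i\cdot g)$ with $g\in G^P$ — a short, self-contained divisibility computation. You instead work entirely at the level of leading-monomial ideals: you identify $\LM(G)$ with $\LM(I^S)=\LM_P(I)^S$, push this through $\pi$ via Proposition~\ref{genS2P}, and collapse $(\LM_P(I)^S)^P$ back to $\LM_P(I)$ via Proposition~\ref{S2Pcor}. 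This makes the result a formal consequence of the correspondence package rather than a fresh monomial manipulation. Your argument is a little longer, but it avoids any explicit computation inside $\Mon(S)$, and it makes transparent that the proposition is the exact mirror of Proposition~\ref{gbP2S} under $\pi$ and $\xi$. The only places requiring care — and you flag them correctly — are (a) that $\LM_P(I)$ is itself a $\w$-graded $\Sigma$-invariant proper ideal so that Proposition~\ref{S2Pcor} applies to it (true: it is a monomial ideal, $\Sigma$-invariance was established earlier, and $1\notin\LM_P(I)$ since $I\neq P$), and (b) the commutation $\pi\circ\lm = \lm\circ\pi$ on nonzero $s$-homogeneous elements, which rests on $\lm(f s^i)=\lm(f)s^i$.
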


\begin{proof}
Let $f\in I_l$ for some $l\geq 0$ and consider the element
$f s^l\in I^S$. Since $G$ is an $s$-homogeneous \Gr\ basis of $I^S$,
there is $g s^k\in G$ ($g\in P,k\geq 0$) such that
$\lm(f s^l) = q s^i \lm(g s^k) s^j$ that is $\lm(f) s^l = q s^i \lm(g) s^{k+j} =
q (s^i\cdot\lm(g)) s^l$ with $q\in M$ and $i + j + k = l$. It follows that
$\lm(f) = q (s^i\cdot\lm(g)) = q \lm(s^i\cdot g)$ with $g = \pi(g s^k)\in G^P$
and we conclude that $G^P$ is a \Gr\ $\Sigma$-basis of $I$.
\end{proof}

Note that Proposition \ref{gbP2S} and Proposition \ref{gbS2P} explain that
there is a complete equivalence between \Gr\ bases computations for
$\w$-graded $\Sigma$-invariant ideals $I\subsetneq P$ and their skew
analogues $I^S$ which are graded two-sided ideals of $S$. In particular,
\Gr\ $\Sigma$-bases of $I$ can be computed by the algorithm \SkewGBasis\
when applied to $I^S$. Precisely, if $H = \bigcup_i H_i$
is a $\w$-homogeneous $\Sigma$-basis of $I$ and $G = \SkewGBasis(H^S)$ then
$G^P = \pi(G)$ is a $\w$-homogeneous \Gr\ $\Sigma$-basis of $I$. 
We may call this procedure \SigmaGBasis2.

The following result provides algorithmic solution of the membership
problem for a class of $\Sigma$-invariant ideals. Note that such kind
of results are quite rare, for instance, in the theory of difference
ideals.

\begin{proposition}
\label{Ptermin}
Let $I\subset P$ be a $\w$-graded $\Sigma$-invariant ideal and fix $d\geq 0$.
Assume that $I$ has a $\w$-homogeneous basis $H$ such that $H_d =
\{f\in H\mid \w(f)\leq d\}$ is a finite set. Then, there is
a $\w$-homogeneous \Gr\ $\Sigma$-basis $G$ of $I$ such that $G_d$
is also a finite set. In other words, if we consider for the algorithm
\SigmaGBasis\ a selection strategy of the S-polynomials based on their
weights, we obtain that the $d$-truncated version of \SigmaGBasis\ stops
in a finite number of steps.
\end{proposition}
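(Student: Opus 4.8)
The plan is to carry the statement along the bijection $I\leftrightarrow I^S$ of Proposition \ref{S2Pcor} to the skew analogue $I^S\subseteq S$, where Proposition \ref{termin} already provides truncated termination, and then to transport the conclusion back through the epimorphism $\pi$ by Proposition \ref{gbS2P}. We may assume $I\subsetneq P$, the case $I=P$ being trivial. First I would pass to $I^S$: given the $\w$-homogeneous basis $H$ of $I$, by construction $H^S:=\xi(H)=\{f s^{\w(f)}\mid f\in H\}$ is an $s$-homogeneous basis of $I^S$, and since $\deg_s(\xi(f))=\w(f)$ we get $\{g\in H^S\mid\deg_s(g)\le d\}=\xi(H_d)$, which is finite; thus Proposition \ref{termin} applies to the graded two-sided ideal $I^S$ with this basis.

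Before transporting back I would record a structural fact about the run of \SkewGBasis$(H^S)$: since the input consists of elements $f s^{\w(f)}$ and $\sigma$ is compatible with divisibility, every S-polynomial $\spoly(p,s^i q s^j)$ or $\spoly(p s^i,s^j q)$ and every nonzero remainder occurring in the run is again of the form $g s^{\w(g)}$ with $g$ $\w$-homogeneous. Indeed multiplication of a $\w$-homogeneous element by a monomial preserves $\w$-homogeneity (Definition \ref{weight}(ii)), one has $\w(\lcm(m,n))=\max(\w(m),\w(n))$ by Lemma \ref{lcmbound} and, when $s^k\cdot m$ divides $n$, $\w(n)-k\ge\w(m)$ by Lemma \ref{divbound}, while \Reduce\ preserves $s$-degree; an induction along the algorithm then yields $\deg_s(h)=\w(\pi(h))$ with $\pi(h)$ $\w$-homogeneous for every intermediate $h$. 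Hence running \SkewGBasis$(H^S)$ with an $s$-degree-increasing selection strategy produces an $s$-homogeneous \Gr\ basis $\Gamma$ of $I^S$ all of whose elements are $\xi(g)$, $g\in I$ $\w$-homogeneous of weight $\deg_s(\xi(g))$, whose degree-$\le d$ part $\Gamma_d$ is what the $d$-truncated run computes, and is finite by Proposition \ref{termin}.

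Now $G:=\pi(\Gamma)$ is a \Gr\ $\Sigma$-basis of $I$ by Proposition \ref{gbS2P}, it is $\w$-homogeneous by the previous paragraph, and $G_d=\pi(\Gamma_d)$ is finite; this is the first assertion. For the algorithmic assertion I would verify that $d$-truncated \SigmaGBasis$(H)$ and $d$-truncated \SkewGBasis$(H^S)$ run in lockstep through $\xi$ and $\pi$. For $\w$-homogeneous $\tilde f,\tilde g$, the two S-polynomial loops of \SkewGBasis\ applied to the pair $(\tilde f s^{\w(\tilde f)},\tilde g s^{\w(\tilde g)})$ produce, after applying $\pi$, exactly the S-polynomials $\spoly(\tilde f,s^k\cdot\tilde g)$ for all $k\ge 0$ --- the $\spoly(p,s^i q s^j)$ loop giving $0\le k\le\w(\tilde f)-\w(\tilde g)$ and the $\spoly(p s^i,s^j q)$ loop giving the remaining values --- and the $s$-degree of such an S-polynomial in $S$ equals $\max(\w(\tilde f),k+\w(\tilde g))$, that is, the weight of $\spoly(\tilde f,s^k\cdot\tilde g)$ in $P$; likewise \Reduce\ against $\Sigma\cdot(\text{current basis})$ in $P$ is the $\pi$-image of \Reduce\ in $S$ on elements of the matching $s$-degree. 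So a weight selection strategy bounded by $d$ in $P$ mirrors the $s$-degree strategy bounded by $d$ in $S$, and the $P$-side procedure halts because the $S$-side one does.

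The step I expect to be the main obstacle is exactly this dictionary: reconciling the single ``$s\cdot$'' action available on $P$ with the two independent $s$-multiplications governing the two S-polynomial loops of \SkewGBasis, so that the weight-$\le d$ critical pairs in $P$ are matched --- up to a harmless repetition at $k=\w(\tilde f)-\w(\tilde g)$ --- with the $s$-degree-$\le d$ critical pairs in $S$, and checking that this matching respects the truncation bound, which is where Lemmas \ref{divbound} and \ref{lcmbound} are used. Everything else is a direct appeal to Propositions \ref{termin} and \ref{gbS2P}.
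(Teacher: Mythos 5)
Your proof is correct, and it takes a genuinely different route than the paper's own proof, though one the paper itself sketches immediately afterward as an alternative. The paper's proof of Proposition~\ref{Ptermin} is a direct argument in $P$: it observes that the elements of $\Sigma\cdot H$ (hence of $\Sigma\cdot G$) are $\w$-homogeneous, forms the finite set $H'_d = \{s^i\cdot f\mid f\in H_d,\ i+\w(f)\le d\}$, restricts to the finitely-variabled (hence Noetherian) subring $P^{(d)} = K[X_d]$, and invokes Noetherianity of $P^{(d)}$ to get termination of the $d$-truncated Buchberger run. You instead transport $I$ to its skew analogue $I^S$ via $\xi$, apply Proposition~\ref{termin} to $S$, and pull back via $\pi$ and Proposition~\ref{gbS2P}; this is exactly the route the paper flags in the remark that closes Section~5 (``Note that the above result can be obtained also by Proposition~\ref{termin}\ldots''), but you have worked out the details the paper only gestures at. Both routes ultimately lean on the same Noetherianity kernel, since Proposition~\ref{termin} is itself proved by restricting to $P^{(d)}$ and the finite-rank free module $S^{(d)}$. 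What the paper's direct argument buys is brevity and independence from Sections~5's translation machinery; what your argument buys is an explicit dictionary between the runs of \SigmaGBasis\ and \SkewGBasis\ (the lockstep correspondence of S-polynomials, $s$-degrees, weights, and reductions), which makes precise the ``duality'' the paper alludes to when it names the transported procedure \SigmaGBasis2. Your bookkeeping of the two S-polynomial loops of \SkewGBasis\ matching the single family $\spoly(\tilde f,s^k\cdot\tilde g)$, including the harmless overlap at $k=\w(\tilde f)-\w(\tilde g)$ and the identity $\deg_s = \max(\w(\tilde f),k+\w(\tilde g)) = \w(\spoly(\tilde f,s^k\cdot\tilde g))$, is accurate and uses Lemmas~\ref{divbound} and~\ref{lcmbound} in the way the paper does elsewhere (e.g.\ in Lemma~\ref{gbgenIS}).
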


\begin{proof}
First of all, note that the algorithm \SigmaGBasis\ essentially
computes a subset $G$ of a \Gr\ basis $\Sigma\cdot G$ obtained
by applying the Buchberger algorithm to the basis $\Sigma\cdot H$
of $I$. Moreover, by property (iii) of Definition \ref{weight} and
Lemma \ref{lcmbound} the elements of $\Sigma\cdot H$ and hence of
$\Sigma\cdot G$ are all $\w$-homogeneous. Denote $H'_d =
\{ s^i\cdot f\mid i\geq 0, f\in H_d, i + \w(f)\leq d\}$. Since $H'_d$
is also a finite set, consider $X_d$ the finite set of variables
of $P$ occurring in the elements of $H'_d$ and define $P^{(d)} = K[X_d]$.
In fact, the $d$-truncated algorithm \SigmaGBasis\ computes a subset
of a \Gr\ basis of the ideal $I^{(d)}$ of $P^{(d)}$ generated
by $H'_d$. By Noetherianity of the ring $P^{(d)}$, we clearly obtain
termination.
\end{proof}

Note that the above result can be obtained also by Proposition \ref{termin}.
In fact, if $I\subsetneq P$ is finitely $\Sigma$-generated up to weight $d$
then $I^S$ is a graded ideal of $S$ which is finitely generated up to
$s$-degree $d$. Precisely, if $H = \bigcup_i H_i$ is a $\w$-homogeneous
$\Sigma$-basis of $I$ and the set $\bigcup_{i\leq d} H_i$ is finite
for all $d$, then $\{ f s^i \mid f\in H_i, i\leq d \}$ is also a finite
set that generates $I^S$ up to degree $d$.


\section{The skew letterplace embedding}

Denote $\N^* = \{1,2,\ldots\}$ the set of positive integers and
let $X = \{x_1,x_2,\ldots\}$ be a finite or countable set of variables.
We denote by $x_i(j)$ each element $(x_i,j)$ of the product set
$X\times\N^*$ and define $P = K[X\times\N^*]$ the polynomial ring in the
commuting variables $x_i(j)$. Consider the algebra monomorphism
of infinite order $\sigma:P \to P$ such that $x_i(j)\mapsto x_i(j+1)$
for all $i,j$. Note that $\sigma$ is a monomial map that is compatible
with divisibility in $\Mon(P)$. Then, put $S = P[s;\sigma]$ the skew polynomial
ring in the variable $s$ defined by $P$ and $\sigma$. Finally, let
$F = \KX$ denote the free associative algebra generated by $X$.
We consider $F$ as a graded algebra with respect to the total degree.
Recall that $S = \bigoplus_{i\in\N} S_i$ is also a graded algebra
with $S_i = P s^i$.

\begin{definition}
Let $A\subset S$ be a $K$-subalgebra. If $A$ is spanned by a submonoid
$M\subset\Mon(S)$ then we call $A$ {\em a monomial subalgebra of $S$}
and we denote $\Mon(A) = M$. In this case, a monomial ordering of $S$ can
be restricted to $A$.
\end{definition}

For instance, $P$ is a monomial subalgebra of $S$. We have now a result
about the possibility to embed the free associative algebra $F$
into the skew polynomial ring $S$.

\begin{proposition}
The graded algebra homomorphism $\iota:F\to S, x_i\mapsto x_i(1)s$
is injective. Then, the free associative algebra $F$ is isomorphic
to $R = \Im\iota$, a graded monomial subalgebra of $S$.
\end{proposition}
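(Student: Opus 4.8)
The plan is to compute $\iota$ explicitly on the words of $F$, check that distinct words are sent to distinct monomials of $S$, and then conclude by linearity. First I would note that $\iota$ is well defined and graded by the universal property of the free algebra, since $x_i(1)s\in S_1$, and then prove by induction on $d$ the closed formula
\[
\iota(x_{i_1}x_{i_2}\cdots x_{i_d}) \;=\; x_{i_1}(1)\,x_{i_2}(2)\cdots x_{i_d}(d)\;s^d
\]
for every word $x_{i_1}\cdots x_{i_d}\in\Mon(F)$. The cases $d=0,1$ are immediate from $\iota(1)=1$ and $\iota(x_i)=x_i(1)s$. For the inductive step write $m=x_{i_1}(1)\cdots x_{i_{d-1}}(d-1)\in\Mon(P)$, so that $\iota(x_{i_1}\cdots x_{i_d})=(m\,s^{d-1})\,(x_{i_d}(1)\,s)$; the defining commutation $s f=\sigma(f)\,s$ in $S$ yields $s^{d-1}\,x_{i_d}(1)=\sigma^{d-1}(x_{i_d}(1))\,s^{d-1}=x_{i_d}(d)\,s^{d-1}$, and the product collapses to $m\,x_{i_d}(d)\,s^d$. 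This identifies $\iota$ on monomials with the classical letterplace assignment $\iota'$ recalled in the introduction, now lifted into $S$; in particular $\iota$ carries $\Mon(F)$ into $\Mon(S)$.

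Next I would observe that the monomials $x_{i_1}(1)\cdots x_{i_d}(d)\,s^d$ so obtained are pairwise distinct: such a monomial determines $d=\deg_s$, and then, for each place $j\le d$, the unique index $i_j$ for which $x_{i_j}(j)$ occurs in its $P$-component. Here one uses that $P$ is a polynomial ring, so $\Mon(P)$ is a $K$-basis and equal monomials have equal exponent vectors, together with the fact that among the variables with second coordinate $j$ only $x_{i_j}(j)$ appears. Hence $\iota$ restricts to an injective monoid homomorphism $\Mon(F)\to\Mon(S)$; call its image $M$, a submonoid of $\Mon(S)$. Since $\Mon(F)$ is a $K$-basis of $F$, for $0\neq f=\sum_w c_w w\in F$ (with $c_w\in K^*$ and $w$ ranging over distinct words) the element $\iota(f)=\sum_w c_w\,\iota(w)$ is a nontrivial linear combination of pairwise distinct elements of the $K$-basis $\Mon(S)$, hence nonzero. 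Thus $\ker\iota=0$.

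Finally, $\iota\colon F\to R:=\Im\iota$ is a surjective algebra homomorphism with trivial kernel, hence an isomorphism, and it is graded because the formula above gives $\iota(F_d)=R\cap S_d$. Since $M$ consists of $s$-homogeneous monomials and spans $R=\iota(\Span_K\Mon(F))=\Span_K M$, we get $R=\bigoplus_d(R\cap S_d)$, so $R$ is a graded $K$-subalgebra of $S$ spanned by the submonoid $M\subset\Mon(S)$, i.e.\ a graded monomial subalgebra with $\Mon(R)=M$. No step is genuinely hard; the only point demanding care is the induction producing the closed form, since commuting past $s$ shifts the index of the letter sitting in place $k$ exactly $k-1$ times, turning the naive product $x_{i_1}(1)\cdots x_{i_d}(1)\,s^d$ into the staircase $x_{i_1}(1)x_{i_2}(2)\cdots x_{i_d}(d)\,s^d$.
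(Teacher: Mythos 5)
Your proof is correct and takes essentially the same approach as the paper: both identify $\iota(x_{i_1}\cdots x_{i_d}) = x_{i_1}(1)\cdots x_{i_d}(d)\,s^d$ via the commutation rule and deduce injectivity from the fact that these are pairwise distinct elements of the $K$-basis $\Mon(S)$. You merely spell out the induction, the injectivity on monomials, and the passage to linear combinations, all of which the paper leaves implicit in a single sentence.
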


\begin{proof}
It is sufficient to note that by the commutation rule of the variable $s$
and the definition of the endomorphism $\sigma$, any word
$x_{i_1}\cdots x_{i_d}\in\Mon(F)$ maps into
$x_{i_1}(1)\cdots x_{i_d}(d)s^d\in\Mon(S)$.
\end{proof}

We call $S$ the {\em skew letterplace algebra} and the algebra monomorphism
$\iota$ the {\em skew letterplace embedding}. In Section 7 we will give
motivation for such names.
Fix now a monomial ordering $\prec$ on the algebra $S$ that is $\sigma$ is
compatible with the restriction of $\prec$ to $\Mon(P)$. It is easy to
show that many usual monomial orderings on $P$ (lex, degrevlex, etc)
satisfy such condition. Recall that the existence of monomial orderings
for $P$ is provided by the Higman's lemma which implies the following result
(see for instance \cite{AH}, Corollary 2.3 and remarks at beginning
of page 5175).

\begin{proposition}
Let $\prec$ be a total ordering on the set $\Mon(P)$ such that for all
$m,n,t\in \Mon(P)$ one has $1\preceq m$ and if $m\prec n$ then $t m \prec t n$.
Then $\prec$ is also a well-ordering of $\Mon(P)$ that is a monomial
ordering of $P$ if and only if the restriction of $\prec$ to the variables
set $X\times\N^*$ is a well-ordering.
\end{proposition}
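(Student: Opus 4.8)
The statement to prove is a characterization of monomial orderings on $P = K[X \times \N^*]$: a total ordering $\prec$ on $\Mon(P)$ that is compatible with multiplication and satisfies $1 \preceq m$ for all monomials is a well-ordering (hence a monomial ordering) if and only if its restriction to the variable set $X \times \N^*$ is a well-ordering.

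The plan is to prove both directions. The forward direction is trivial: if $\prec$ is a well-ordering on all of $\Mon(P)$, then any subset, in particular the set of variables $X \times \N^*$, is well-ordered under the restriction. So the content is in the converse.

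For the converse, suppose the restriction of $\prec$ to $X \times \N^*$ is a well-ordering, and suppose for contradiction that $\prec$ is not a well-ordering on $\Mon(P)$; then there is an infinite strictly descending chain $m_1 \succ m_2 \succ m_3 \succ \cdots$ of monomials. The key tool is Higman's lemma (Dickson-type): the set of monomials $\Mon(P)$, viewed as finite multisets over the alphabet $X \times \N^*$ well-ordered by $\prec$, is well-quasi-ordered under the divisibility relation $\mid$ — more precisely, Higman's lemma tells us that in any infinite sequence of monomials there exist indices $i < j$ with $m_i \mid m_j$. First I would apply Higman's lemma to the descending chain to extract such a pair $m_i \mid m_j$ with $i < j$. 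Then $m_j = t\, m_i$ for some monomial $t$; since $1 \preceq t$, multiplicative compatibility gives $m_i = 1 \cdot m_i \preceq t\, m_i = m_j$, contradicting $m_i \succ m_j$ (which came from $i < j$ in a strictly descending chain). This contradiction shows no infinite descending chain can exist, so $\prec$ is a well-ordering.

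The one subtlety — and the main point requiring care — is the precise form of Higman's lemma being invoked and why it applies here. Higman's lemma requires the underlying alphabet to be well-quasi-ordered; here the alphabet $X \times \N^*$ may be countably infinite, but by hypothesis it is \emph{well-ordered} by $\prec$ (a fortiori well-quasi-ordered), so Higman's lemma applies to give that the divisibility order on monomials (finite words/multisets over this alphabet, with the subword/submultiset embedding) is a well-quasi-order; in particular every infinite sequence of monomials contains a pair comparable under divisibility in the right direction. I would cite \cite{Hi} for this, matching the reference already used in Section 3. Everything else is a one-line consequence of the axioms $1 \preceq m$ and multiplicative compatibility; there is no real calculation to grind through.
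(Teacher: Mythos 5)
The paper does not actually prove this proposition: it states it and refers to Aschenbrenner--Hillar \cite{AH} (Corollary 2.3) and Higman's lemma, so there is no in-paper argument to compare against. Your overall route --- take the alphabet $X\times\N^*$ with the restricted order $\prec$, note it is well-ordered hence well-quasi-ordered, apply Higman's lemma to rule out an infinite $\prec$-descending chain of monomials --- is exactly what that citation is pointing at, and the forward direction is indeed trivial.

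There is, however, a concrete misstep in the converse. Once the alphabet carries the order $\prec$, Higman's lemma does \emph{not} give you a pair $i<j$ with $m_i\mid m_j$. Divisibility is the submultiset relation for an alphabet ordered only by \emph{equality}, and equality on an infinite alphabet is not a well-quasi-order (the variables themselves form an infinite antichain under divisibility, e.g.\ $x_1(1), x_1(2), \ldots$). What Higman's lemma actually gives is a pair $i<j$ with $m_i$ Higman-embedding into $m_j$: writing the variable occurrences of $m_i$ as $y_1,\ldots,y_k$, one can choose distinct occurrences $z_1,\ldots,z_k$ in $m_j$ with $y_a\preceq z_a$ for each $a$, so that $m_j=z_1\cdots z_k\,t$ for some monomial $t$, but in general $m_j\neq t\,m_i$. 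Hence the equation $m_j=t\,m_i$ you write, and the single appeal to $1\preceq t$, is false. The desired conclusion $m_i\preceq m_j$ is still true, but requires the longer chain $m_i=y_1\cdots y_k\preceq z_1y_2\cdots y_k\preceq\cdots\preceq z_1\cdots z_k\preceq z_1\cdots z_k\,t=m_j$, where each of the first $k$ steps replaces one variable occurrence by a $\preceq$-larger one via the multiplicative compatibility $m\prec n\Rightarrow tm\prec tn$, and the final step uses $1\preceq t$. With that repair the proof is correct; as written, the key inequality is obtained from a false intermediate claim.
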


Clearly, it is easy to assign well-orderings to the set $X\times\N^*$
which is in bijective correspondence to $\N^2$. Note that the algebra $P$
has also a multigrading which is defined as follows.
If $m = x_{i_1}(j_1)\cdots x_{i_d}(j_d)\in\Mon(P)$, then we denote
$\partial(m) = \mu = (\mu_k)_{k\in\N^*}$ where $\mu_k =
\#\{\alpha \mid j_\alpha = k\}$. If $P_\mu\subset P$ is the subspace
spanned by all monomials of multidegree $\mu$ then $P = \bigoplus_\mu P_\mu$
is clearly a multigrading of the algebra $P$. If $\mu = (\mu_k)$ is
a multidegree, we denote $i\cdot\mu = (\mu_{k-i})_{k\in\N^*}$ where
we put $\mu_{k-i} = 0$ when $k-i < 1$. By definition of the map $\sigma$,
if we denote $S_{\mu,i} = P_\mu s^i$ one obtains that
$S = \bigoplus_{\mu,i} S_{\mu,i}$ and $S_{\mu,i} S_{\nu,j}\subset
S_{\mu + (i\cdot\nu), i + j}$.
The elements of each subspace $S_{\mu,i}\subset S$ are said
{\em multi-homogeneous}. An ideal $J\subset S$ is called {\em multigraded}
if $J = \sum_{\mu,i} J_{\mu,i}$ with $J_{\mu,i} = J\cap S_{\mu,i}$.
In other words, the ideal $J$ is generated by multi-homogeneous elements.
For any integer $i\geq 0$ we denote by $1^i$ the multidegree $\mu =
(\mu_k)_{k\in\N^*}$ such that $\mu_k = 1$ if $k\leq i$ and $\mu_k = 0$
otherwise. Clearly, a homogeneous element $f s^i\in S$ ($f\in P$)
belongs to the graded subalgebra $R$ if and only if $f$ is multi-homogeneous
and $\partial(f) = 1^i$. In other words, $R_i = R\cap S_i = S_{1^i,i} =
P_{1^i} s^i$.

\begin{lemma}
\label{Rgood}
Let $f s^l\in S$ with $f\in P$ a multi-homogeneous element and consider
$f_{ij} s^i, g_j s^j, h_{jk} s^k\in S$ where $f_{ij}, g_j, h_{jk}\in P$ 
are multi-homogeneous elements such that $f s^l =
\sum_{i+j+k=l} f_{ij} s^i g_j s^j h_{jk} s^k$. Then, from $f s^l\in R$
it follows that $f_{ij} s^i, g_j s^j, h_{jk} s^k\in R$, for all $i,j,k$.
\end{lemma}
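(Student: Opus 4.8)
The plan is to push everything through the refined multigrading $S=\bigoplus_{\mu,i}S_{\mu,i}$ introduced above, together with the characterization recalled just before the lemma: a homogeneous element $p s^i$ ($p\in P$) lies in $R$ if and only if $p$ is multi-homogeneous with $\partial(p)=1^i$ (equivalently $R_i=S_{1^i,i}$). So proving $f_{ij}s^i,g_js^j,h_{jk}s^k\in R$ amounts to showing $\partial(f_{ij})=1^i$, $\partial(g_j)=1^j$, $\partial(h_{jk})=1^k$.

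First I would reduce to the case where every summand on the right is multi-homogeneous of multidegree exactly $1^l$. Indeed each product $f_{ij}s^i\,g_js^j\,h_{jk}s^k$ is multi-homogeneous: by the product rule $S_{\alpha,i}S_{\beta,j}\subseteq S_{\alpha+(i\cdot\beta),i+j}$ it sits in $S_{\nu_{ijk},l}$ with $\nu_{ijk}=\partial(f_{ij})+i\cdot\partial(g_j)+(i+j)\cdot\partial(h_{jk})$. Since $fs^l\in R_l=S_{1^l,l}$ and the $S_{\mu,l}$ are in direct sum, projecting the given identity onto the component $S_{1^l,l}$ leaves $fs^l$ on the left while on the right only the terms with $\nu_{ijk}=1^l$ survive (the others cancel among themselves and may be discarded); note also that any surviving term automatically lies in $R_l\subseteq R$. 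So we may assume $\nu_{ijk}=1^l$ for every nonzero term, i.e. $\partial(f_{ij})+i\cdot\partial(g_j)+(i+j)\cdot\partial(h_{jk})=1^l$.

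Next I would run a combinatorial ``level window'' analysis on a single such term. The equation says that the multiset of levels occurring in the monomials of $f_{ij}$, that of $g_j$ shifted up by $i$, and that of $h_{jk}$ shifted up by $i+j$, are pairwise disjoint and together exhaust $\{1,\dots,l\}$ with multiplicity one each. The shifts confine these three supports to $[1,\cdot]$, $[i+1,\cdot]$ and $[i+j+1,\cdot]$ respectively; hence levels $1,\dots,i$ can only come from $f_{ij}$, no level beyond $l$ can occur, and combining this with the balance of total degrees $\deg f_{ij}+\deg g_j+\deg h_{jk}=l=i+j+k$ (the total degree of $f$ is $l$ because $\partial(f)=1^l$) one deduces that $f_{ij}$ occupies exactly the levels $1,\dots,i$, that $g_j$ occupies exactly $1,\dots,j$, and $h_{jk}$ exactly $1,\dots,k$; that is $\partial(f_{ij})=1^i$, $\partial(g_j)=1^j$, $\partial(h_{jk})=1^k$, and the lemma follows from the characterization above.

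I expect the last combinatorial step to be the main obstacle. A priori nothing stops $f_{ij}$ (or $g_j$) from carrying a variable at a level strictly larger than $i$ (an ``overshoot'' into a neighbouring window), and the crux of the proof is the dichotomy that such an overshoot either forces two of the three shifted supports to hit the same level --- contradicting disjointness inside $1^l$ --- or forces a later window to be under-filled, contradicting the degree count $l=i+j+k$. Turning this dichotomy into a clean induction on the windows, rather than the routine bookkeeping with the shift operator $i\cdot\mu$ and the direct-sum projection, is where the real work lies.
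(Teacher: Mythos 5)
You follow the same route as the paper: project onto the multidegree\-/$1^l$ component to reduce to a single summand, and then analyze the three multidegrees $\mu=\partial(f_{ij})$, $\nu=\partial(g_j^{s^i})$, $\rho=\partial(h_{jk}^{s^{i+j}})$, which satisfy $\mu+\nu+\rho=1^l$ with $\nu$ supported on places $\geq i+1$ and $\rho$ on places $\geq i+j+1$. You correctly single out the remaining combinatorial step (forcing $\mu=1^i$, $\nu=i\cdot 1^j$, $\rho=(i+j)\cdot 1^k$) as the crux and flag the ``overshoot'' obstacle. The dichotomy you sketch, however, does not close it. Take $l=3$, $(i,j,k)=(0,1,2)$, $f_{01}=x_1(2)$, $g_1=x_1(1)$, $h_{12}=x_1(2)$: then $f_{01}\,(g_1 s)\,(h_{12}s^2)=x_1(1)x_1(2)x_1(3)\,s^3\in R_3$, with $\mu=(0,1,0,\ldots)$, $\nu=(1,0,0,\ldots)$, $\rho=(0,0,1,\ldots)$ pairwise disjoint, summing to $1^3$, and $\deg f_{01}+\deg g_1+\deg h_{12}=3=i+j+k$ --- yet $\mu\neq 1^0$ and $\rho\neq 1\cdot 1^2$, so neither $f_{01}s^0$ nor $h_{12}s^2$ lies in $R$. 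Neither arm of your dichotomy fires (the shifted supports stay disjoint, and the total degree balances), so the bare support inequalities together with $\mu+\nu+\rho=1^l$ do not by themselves force the desired shapes; some extra input is needed, e.g.\ a rewriting that moves overshooting variables from one factor into the adjacent one before applying the multidegree bookkeeping. For what it is worth, the paper's own proof asserts the conclusion ``necessarily'' with no further justification at exactly this point, so your instinct that the step is nontrivial is well founded.
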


\begin{proof}
Clearly we have $f = \sum_{i+j+k=l} f_{ij} g_j^{s^i} h_{jk}^{s^{i+j}}$.
Denote $\mu = \partial(f_{ij}), \nu = \partial(g_j^{s^i})$ and
$\rho = \partial(h_{jk}^{s^{i+j}})$ and put $\alpha = \min\{k\mid\nu_k > 0\}$
and $\beta = \min\{k\mid\rho_k > 0\}$. By definition of the map $\sigma$,
one has that $\alpha\geq i+1$ and $\beta\geq i+j+1$. If we assume $f s^l\in R$
that is $1^l = \partial(f) = \mu + \nu + \rho$, then necessarily 
$\mu = 1^i, \nu = i\cdot 1^j$ and $\rho = (i+j)\cdot 1^k$ and hence
$\partial(f_{ij}) = 1^i, \partial(g_j) = 1^j, \partial(h_{jk}) = 1^k$.
\end{proof}

\begin{proposition}
\label{gengood}
Let $I$ be a graded (two-sided) ideal of $R\subset S$ and let $J$ be
the {\em extension of $I$ to $S$} that is $J$ is the (multigraded)
ideal generated by $I$ in $S$. If $G$ is a multi-homogeneous
basis of $J$ then $G\cap R$ is a (homogeneous) basis of $I$.
In particular, the {\em contraction} $J\cap R$ is equal to $I$,
that is there is a {\em bijective correspondence} between all graded
ideals of $R$ and their extensions to $S$.
\end{proposition}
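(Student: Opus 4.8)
The plan is to prove the two inclusions $\langle G\cap R\rangle_R\subseteq I$ and $I\subseteq\langle G\cap R\rangle_R$, where $\langle\cdot\rangle_R$ denotes the two-sided ideal generated in $R$, and then to read off the statement about $J\cap R$ and the bijection as a formal consequence. The engine of the whole argument is the rigidity of the multidegree $1^l$ expressed by Lemma~\ref{Rgood}: in any \emph{multi-homogeneous} presentation of an element of $R$ as a sum of products of elements of $S$, every factor is forced to lie in $R$. So the preparatory step is always the same. Given an identity $u=\sum_\alpha p_\alpha\,v_\alpha\,q_\alpha$ in $S$ with $u\in R$ and the $v_\alpha$ multi-homogeneous, I decompose the $p_\alpha,q_\alpha$ into their multi-homogeneous components, expand, and discard the resulting terms whose multidegree differs from that of $u$: their sum is the corresponding multi-homogeneous component of $u$ in the multigrading $S=\bigoplus_{\mu,i}S_{\mu,i}$, hence zero. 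Since the multigrading is a ring grading and $u\in R_l=S_{1^l,l}$ for some $l$, each surviving term is a product of multi-homogeneous elements whose total multidegree is $1^l$, and Lemma~\ref{Rgood}, applied termwise, gives $p_\alpha,v_\alpha,q_\alpha\in R$ for every such term.

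For $G\cap R\subseteq I$, take $g\in G\cap R$, so $g\in R_l$ for some $l$ and $g\in J=\langle I\rangle_S$. Write $g=\sum_\alpha p_\alpha\,a_\alpha\,q_\alpha$ with $a_\alpha\in I$; since $I$ is graded I may take the $a_\alpha$ homogeneous, hence multi-homogeneous (as $I\subseteq R$), so the preparatory step applies and yields $p_\alpha,q_\alpha\in R$. Then $g\in R\,I\,R=I$, because $R$ is unital ($1_S\in R_0=K$) and $I$ is an ideal of $R$. For the reverse inclusion, take $w\in I$; since $I$ is graded I may take $w$ homogeneous, so $w\in R_l$ for some $l$ and $w\in J$. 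Writing $w=\sum_\alpha p_\alpha\,g_\alpha\,q_\alpha$ with $g_\alpha\in G$ (multi-homogeneous, since $G$ is a multi-homogeneous basis of $J$) and running the preparatory step, I obtain $p_\alpha,g_\alpha,q_\alpha\in R$; in particular $g_\alpha\in G\cap R$, so $w\in\langle G\cap R\rangle_R$. Combining the two inclusions, $\langle G\cap R\rangle_R=I$, i.e.\ $G\cap R$ is a (homogeneous) basis of $I$.

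For the contraction, note that since $J$ is multigraded and $R=\bigoplus_l R_l$ with $R_l=S_{1^l,l}$ a single multigraded component of $S$, every element of $J\cap R$ equals the sum of its multi-homogeneous components, each of which again lies in $J\cap R$; hence $J\cap R$ is a graded ideal of $R$. The argument of the previous paragraph, applied to a multi-homogeneous $g\in J\cap R$ in place of $g\in G\cap R$, shows $g\in I$, whence $J\cap R\subseteq I$; and $I\subseteq J$ together with $I\subseteq R$ gives $I\subseteq J\cap R$. Therefore $J\cap R=I$, so the extension map $I\mapsto\langle I\rangle_S$ from graded ideals of $R$ into multigraded ideals of $S$ has the contraction $J\mapsto J\cap R$ as a left inverse; it is thus injective and, being by definition surjective onto the set of extensions, gives the asserted bijective correspondence.

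I expect essentially no serious obstacle once Lemma~\ref{Rgood} is available: the content is carried entirely by that lemma, and the only point requiring care is the reduction to multi-homogeneous presentations — keeping the generators $g_\alpha$ (resp.\ $a_\alpha$) untouched while splitting the coefficients $p_\alpha,q_\alpha$ into multi-homogeneous pieces and legitimately discarding the off-multidegree terms — so that Lemma~\ref{Rgood} can be invoked term by term rather than on the whole sum at once.
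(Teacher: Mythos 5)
Your proof is correct and uses the same central tool as the paper, namely Lemma~\ref{Rgood}, applied after a reduction to a multi-homogeneous presentation; the paper's proof explicitly records only the inclusion $I\subseteq\langle G\cap R\rangle_R$ (writing a homogeneous $fs^l\in I$ in terms of the basis $G$ of $J$ and invoking Lemma~\ref{Rgood}), and leaves the complementary inclusion $G\cap R\subseteq I$ and the contraction statement $J\cap R=I$ to the reader. You have filled those in by the natural symmetric argument (express $g\in G\cap R$ or $g\in J\cap R$ in terms of multi-homogeneous generators of $J$ drawn from $I$, apply Lemma~\ref{Rgood}, and use that $I$ is an ideal of the unital ring $R$), and you have made explicit the ``preparatory step'' of splitting the coefficients into multi-homogeneous pieces and discarding off-degree terms, which the paper treats as implicit in the hypothesis of Lemma~\ref{Rgood}. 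So this is the same route, written out with more care; no gap.
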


\begin{proof}
Consider $f s^l\in I\subset R$ ($f\in P$) a homogeneous element and
let $G = \{g_j s^j\}$ with $g_j\in P$, $g_j$ multi-homogeneous.
Since $f$ is multi-homogeneous and $G$ is a basis of $J\supset I$,
one has $f s^l = \sum_{i+j+k=l} f_{ij} s^i g_j s^j h_{jk} s^k$
with $f_{ij},h_{jk}\in P$, $f_{ij},h_{jk}$ multi-homogeneous.
From Lemma \ref{Rgood} it follows immediately that all
elements $f_{ij} s^i, g_j s^j, h_{jk} s^k\in R$ that is $G\cap R$
is a basis of $I$.
\end{proof}

\begin{proposition}
\label{gbgood}
Let $I\subset R$ be a graded ideal and let $J\subset S$ be its
extension. If $G\subset J$ is a multi-homogeneous \Gr\ basis of $J$ then
$G\cap R$ is a homogeneous \Gr\ basis of $I$.  
\end{proposition}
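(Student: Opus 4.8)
The plan is to verify the Gröbner property for $G\cap R$ straight from the definition, transporting the two-sided division that takes place in $S$ down into $R$ by means of Lemma \ref{Rgood} together with the multigrading of $P$.

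First, a multi-homogeneous \Gr\ basis of $J$ is in particular a multi-homogeneous basis of $J$, so Proposition \ref{gengood} applies and $G\cap R$ is a basis of the graded ideal $I$. Consequently the leading monomials of $G\cap R$ lie among the leading monomials of $I$, so the two-sided ideal of $R$ they generate is contained in the one generated by $\lm(I)$; only the reverse inclusion remains. Since $I$ is graded and $G\cap R$ consists of homogeneous elements, it suffices to treat a nonzero homogeneous $f\in I\subseteq J$. Because $G$ is a \Gr\ basis of $J$, some nonzero $g\in G$ has $\lm(g)$ as a two-sided divisor of $\lm(f)$ in $\Mon(S)$; write $\lm(f)=a\,\lm(g)\,b$ with $a,b\in\Mon(S)$, and put $a=p s^i$, $\lm(g)=n s^k$, $b=q s^j$ where $p,n,q\in\Mon(P)$ and $i+k+j=\deg_s(f)$. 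Then $\lm(f)=(p s^i)(n s^k)(q s^j)$ is a one-term product of multi-homogeneous monomials of $S$ whose value lies in $R$, so Lemma \ref{Rgood} forces $p s^i,\ n s^k,\ q s^j\in\Mon(R)$.

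It remains to upgrade $\lm(g)\in\Mon(R)$ to $g\in R$. Writing $g=g' s^k$ with $g'\in P$ multi-homogeneous, we have $n=\lm(g')$ and hence $\partial(g')=\partial(n)$; but $n s^k\in\Mon(R)$ means $\partial(n)=1^k$, so $\partial(g')=1^k$ and therefore $g\in S_{1^k,k}=R_k\subseteq R$. Thus $g\in G\cap R$, while $\lm(f)=a\,\lm(g)\,b$ with $a,b\in\Mon(R)$ exhibits $\lm(g)$ as a two-sided divisor of $\lm(f)$ inside $R$. Letting $f$ range over the nonzero homogeneous elements of $I$, the leading monomials of $G\cap R$ generate the two-sided ideal of $R$ generated by $\lm(I)$, so $G\cap R$ is a homogeneous \Gr\ basis of $I$. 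The only delicate point is exactly this transfer of the division $\lm(f)=a\,\lm(g)\,b$ from $S$ to $R$ — that the cofactors $a,b$ and the divisor $\lm(g)$ are monomials of $R$, and that $g$ itself (not merely $\lm(g)$) lies in $R$; both are settled by Lemma \ref{Rgood} and by the fact that a multi-homogeneous polynomial has the same multidegree as its leading monomial, so there is no deeper obstacle.
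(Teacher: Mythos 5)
Your proof is correct, and it reaches the conclusion by a slightly different route than the paper's. The paper argues through \Gr\ representations: it takes a homogeneous $f s^l\in I\subset J$, extracts a (multi-homogeneous) \Gr\ representation $f s^l = \sum_{i+j+k=l} f_{ij} s^i\, g_j s^j\, h_{jk} s^k$ in $S$ with respect to $G$, and then invokes Lemma~\ref{Rgood} on this entire sum, exactly as in Proposition~\ref{gengood}, to conclude that all factors lie in $R$, so that the representation is already a \Gr\ representation in $R$ with respect to $G\cap R$. You instead work with the bare $\LM$-ideal characterization: you only need the single monomial identity $\lm(f)=a\,\lm(g)\,b$ in $\Mon(S)$, apply Lemma~\ref{Rgood} to that one-term product of (automatically multi-homogeneous) monomials to place $a,\lm(g),b$ in $\Mon(R)$, and then use multi-homogeneity of $g$ to upgrade $\lm(g)\in\Mon(R)$ to $g\in R$. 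Your variant is a bit more economical in that it avoids having to note (or rely on) the existence of a multi-homogeneous \Gr\ representation for a multi-homogeneous element, and it makes explicit the step the paper compresses --- namely that the divisor $g$, and not just its leading monomial, lands back in $R$. Both hinge on Lemma~\ref{Rgood}, and the passage from homogeneous $f$ to arbitrary $f\in I$ that you gesture at (using that $\lm(f)$ coincides with the leading monomial of one of the homogeneous components of $f$, which again lies in the graded ideal $I$) is the standard one and is fine.
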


\begin{proof}
If $f s^l = \sum_{i+j+k=l} f_{ij} s^i g_j s^j h_{jk} s^k$
is a \Gr\ representation in $S$ of a homogeneous element $f s^l\in I\subset J$
with respect to $G = \{g_j s^j\}$, then it is sufficient to use the same
argument of Proposition \ref{gengood} to obtain that $f s^l$ has a \Gr\
representation in $R$ with respect to $G\cap R$.
\end{proof}

We obtain finally an algorithm to compute \Gr\ bases of graded two-sided
ideals of the subring $R\subset S$ which is isomorphic to the free associative
algebra $F$ by the map $\iota$. Note that the considered monomial orderings
on $F$ are obtained as the restriction of monomial orderings on $S$ to the
monomial subalgebra $R$.
By applying Proposition \ref{gbgood}, the computation of homogeneous
\Gr\ bases in $R$ is obtained as a slight modification of the algorithm
\SkewGBasis\ for the ideals of $S$. It is interesting to note that
the latter procedure is in turn a variant of the Buchberger algorithm
for modules over commutative polynomial rings. Thus, we may say that
these computations in associative algebras are reduced to analogue ones
over commutative rings via the notion of skew polynomial ring (see also
Section 7). This reverses somehow the trivial fact that commutative algebras
are just a subclass of the associative ones. 

\suppressfloats[b]
\begin{algorithm}\caption{\FreeGBasis2}
\begin{algorithmic}[0]
\State \text{Input:} $H$, a homogeneous basis of a graded two-sided
ideal $I\subset R$.
\State \text{Output:} $G$, a homogeneous \Gr\ basis of $I$.
\State $G:= H$;
\State $B:= \{(f,g) \mid f,g\in G\}$;
\While{$B\neq\emptyset$}
\State choose $(f,g)\in B$;
\State $B:= B\setminus \{(f,g)\}$;
\ForAll{$i,j\geq 0, i + j = \deg_s(f) - \deg_s(g)$ and
$\spoly(f,s^i g s^j)\in R$}
\State $h:= \Reduce(\spoly(f,s^i g s^j), \Sigma\,G\,\Sigma)$;
\If{$h\neq 0$}
\State $B:= B\cup\{(h,h),(h,k),(k,h)\mid k\in G\}$;
\State $G:= G\cup\{h\}$;
\EndIf;
\EndFor;
\ForAll{$i,j\geq 0, j - i = \deg_s(f) - \deg_s(g)$ and
$\spoly(f s^i, s^j g)\in R$}
\State $h:= \Reduce(\spoly(f s^i, s^j g), \Sigma\,G\,\Sigma)$;
\If{$h\neq 0$}
\State $B:= B\cup\{(h,h),(h,k),(k,h)\mid k\in G\}$;
\State $G:= G\cup\{h\}$;
\EndIf;
\EndFor;
\EndWhile;
\State \Return $G$.
\end{algorithmic}
\end{algorithm}

\newpage

Note explicitely that conditions $\spoly(f,s^i g s^j), \spoly(f s^i, s^j g)\in R$
are equivalent to ask that such multi-homogeneous elements of $S$ have
multidegrees of type $(1^d,d)$, for some $d\geq 0$.

\begin{proposition}
The algorithm \FreeGBasis2\ is correct.
\end{proposition}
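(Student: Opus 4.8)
The plan is to deduce the correctness of \FreeGBasis2 from that of \SkewGBasis together with Proposition \ref{gbgood}. Identify $F$ with its image $R\subset S$ under $\iota$, let $H$ be the input, $I\subset R$ the ideal it generates, and $J\subset S$ the extension of $I$ to $S$. First one records two bookkeeping facts. (i) Throughout the run of \FreeGBasis2 one has $G\subset R$: this holds for $G=H$, and every element subsequently added is $\Reduce$ of an S-polynomial which is required to lie in $R$, i.e.\ to be multi-homogeneous of multidegree $(1^d,d)$ for some $d$; since reducing a multi-homogeneous element preserves its multidegree, the added element again has multidegree $(1^d,d)$ and hence lies in $R_d$ (cf.\ Lemma \ref{Rgood}). (ii) $G$ generates $I$ in $R$ (it contains $H$ and, by (i) and since every produced element lies in $J$, is contained in $J\cap R=I$), hence it generates $J$ in $S$ by definition of the extension.

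Now let $\tilde G\supseteq G$ be obtained by running the full algorithm \SkewGBasis on the basis $G$ of $J$, that is, by \emph{also} reducing the S-polynomials $\spoly(f,s^ig s^j)$ and $\spoly(f s^i,s^j g)$ that \FreeGBasis2 discards, and by continuing with every S-polynomial that then arises. By Proposition \ref{sigmacrit}, $\tilde G$ is a \Gr\ basis of $J$, and since the procedure preserves the multigrading of $S$ we may take it multi-homogeneous. The crux is the claim that $\tilde G\cap R=G$. An S-polynomial discarded by \FreeGBasis2 has one of the two shapes above with $f,g\in R$ and multidegree \emph{not} of type $(1^d,d)$. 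In the case $\spoly(f,s^ig s^j)$ the supports in $P$ of the two leading monomials are $\{1,\ldots,\deg_s(f)\}$ and a sub-interval of it, so the support of their lcm is $\{1,\ldots,\deg_s(f)\}$; hence the S-polynomial can avoid $R$ only if the two leading monomials disagree at some common place $k_0$, and then $\spoly(f,s^ig s^j)$ is multi-homogeneous with a multidegree entry $\ge 2$ at $k_0$. In the case $\spoly(f s^i,s^j g)$, either the two leading monomials in $P$ are coprime --- and then the S-polynomial already reduces to zero modulo $\{f s^i,\,s^j g\}$ by Buchberger's first (product) criterion, contributing nothing --- or they are not coprime, in which case (since the S-polynomial does not lie in $R$) they disagree at some common place and the S-polynomial again has a multidegree entry $\ge 2$. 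Finally, the property of having a multidegree entry $\ge 2$ is inherited by all Buchberger descendants: a reduction of a multi-homogeneous element keeps its multidegree, and if one of $u,v$ has a multidegree entry $\ge 2$ at $k_0$ then so has $\spoly(u,v)$, whose multidegree in $P$ is that of $\lcm(\lm(u),\lm(v))$. Since an element with a multidegree entry $\ge 2$ cannot belong to $R$, we get $\tilde G\setminus G\subset S\setminus R$, that is $\tilde G\cap R=G$.

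Applying Proposition \ref{gbgood} to the multi-homogeneous \Gr\ basis $\tilde G$ of $J$ now yields that $\tilde G\cap R=G$ is a homogeneous \Gr\ basis of $I=J\cap R$; transported through $\iota$ this says exactly that \FreeGBasis2 outputs a homogeneous \Gr\ basis of the input ideal. The main obstacle is the middle paragraph: making rigorous that restricting the S-polynomials of \SkewGBasis to those lying in $R$ loses nothing, i.e.\ that the ``mismatch'' S-polynomials and all their descendants never feed back into the $R$-part of the computation. (Alternatively one can check directly that the two families $\spoly(f,s^ig s^j)\in R$ and $\spoly(f s^i,s^j g)\in R$ correspond under $\iota$ to the subword-inclusion and the prefix--suffix overlap obstructions of the words $\iota^{-1}(\lm(f))$ and $\iota^{-1}(\lm(g))$, recovering the classical non-commutative Buchberger criterion; but the grading argument above keeps everything inside the framework of this paper.)
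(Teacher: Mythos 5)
Your proof is correct and reaches the same conclusion by the same high-level route as the paper --- $G\subset R$, $\Reduce$ preserves multidegree, Proposition \ref{gbgood} --- but it fills in a genuine gap. The paper's own proof is three sentences: it observes that $\Reduce$ preserves multi-homogeneity, that every new element of $G$ is a reduction of some S-polynomial $h$, and that (invoking Proposition \ref{gbgood}) the reduced element lies in $R$ iff $h\in R$. What it does not justify is the closure property you isolate: that the S-polynomials thrown away because they lie outside $R$, together with \emph{all} of their Buchberger descendants in the hypothetical full run of \SkewGBasis, remain outside $R$ and therefore cannot contribute anything to $\tilde G\cap R$. You prove this by splitting the discarded S-polynomials into (a) those with coprime $P$-leading-monomials --- disposed of by the product criterion, which is legitimate here because both arguments are $s$-homogeneous of the same $s$-degree so the module S-polynomial collapses to a polynomial one in $P$ --- and (b) those with a ``mismatch'' at some place $k_0$, which are multi-homogeneous with $\partial(\cdot)_{k_0}\ge 2$; and you then observe that having a multidegree entry $\ge 2$ at a fixed place is inherited under $\Sigma$-translates, under $\lcm$ of leading monomials (hence under S-polynomial formation), and under multi-homogeneous reduction. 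That propagation lemma is the content the paper leaves implicit, and it is exactly what licenses the identity $\tilde G\cap R=G$ and hence the application of Proposition \ref{gbgood}. One small omission: in case (a) of the $\spoly(fs^i,s^jg)$ family, when the two $P$-supports are disjoint with a gap, the S-polynomial has all multidegree entries $\le 1$ yet still lies outside $R$ (because of the zero entry in the gap); your multidegree-$\ge 2$ invariant does not cover these, but they are exactly the ones killed by the product criterion, so your two-case split is exhaustive. Your closing remark that the families $\spoly(f,s^igs^j)\in R$ and $\spoly(fs^i,s^jg)\in R$ recover the classical subword and prefix--suffix overlap obstructions under $\iota$ is accurate and is a nice cross-check, though, as you say, not needed once the grading argument is in place.
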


\begin{proof}
Since $G$ is multi-homogeneous implies that $\Sigma\,G\,\Sigma$ is also
multi-homogeneous, the procedure \Reduce\ clearly preserves multi-homogeneity.
Moreover, any element $f\in G$ ($f\notin H$) is obtained by reduction
of a S-polynomial, say $h$. Owing to Proposition \ref{gbgood} we are
interested only in the elements $f\in R$ and this holds if and only if
$h\in R$.
\end{proof}

Assume now that the graded ideal $I\subset R$ has a finite number of generators
up to some degree $d > 0$. Note that the $d$-truncated algorithm \FreeGBasis2\
has termination provided by termination of \SkewGBasis\ as stated in
Proposition \ref{termin}. This generalizes a well-known result about
algorithmic solution of the word problem (membership problem)
for finitely presented graded associative algebras.

\section{Letterplace in $P$}

As in Section 5, consider the $P$-linear map $\pi:S\to P$ such that
$s^i \mapsto 1$, for all $i$. Note now that $\iota' = \pi\iota:F\to P$
is an injective $K$-linear map such that
$x_{i_1}\cdots x_{i_d}\in\Mon(F)\mapsto x_{i_1}(1)\cdots x_{i_d}(d)\in\Mon(P)$. 
Recall that $F = \bigoplus_i F_i$ is a graded algebra with respect to
total degree. Moreover, consider the weight map $\w:\Mon(P)\to\hN$ such that
$\w(x_i(j)) = j$ for all $i,j\geq 1$ and the corresponding grading
$P = \bigoplus_{i\in\hN} P_i$ defined by the monoid $(\hN,\max)$.
Then, we have that $\iota'$ is a homogeneous map (note $K = F_0 = P_{-\infty}$
and $P_0 = 0$) and $\iota = \xi \iota'$ which is an algebra homomorphism.

\begin{definition}
Let $I\subsetneq F$ be a graded (two-sided) ideal. Denote by $I'\subsetneq P$
the $\w$-graded $\Sigma$-invariant ideal $\Sigma$-generated by $\iota'(I)$.
In other words, if $G = \{\iota'(f) \mid f\in I_i, i > 0\}$ then
$I'$ is the ideal of $P$ generated by $\Sigma\cdot G$. We call $I'$
the {\em letterplace analogue} of $I$.
\end{definition}

\begin{proposition}
Let $I\subsetneq F$ be a graded ideal and $I'\subsetneq P$ its letterplace
analogue. Denote by $J = I'^S$ the skew analogue of $I'$ and call $J$
the {\em skew letterplace analogue} of $I$. We have that $J$ is the extension
to $S$ of the ideal $\iota(I)\subset R$. Then, there is a {\em bijective
correspondence} between all graded ideals of $F$ and their (skew) letterplace
analogues.
\end{proposition}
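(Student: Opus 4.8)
The plan is to prove first the structural claim---that $J=I'^{S}$ coincides with the two-sided ideal $N$ of $S$ generated by $\iota(I)\subset R$---and then to read off the bijection by composing this identification with Proposition~\ref{gengood} and Proposition~\ref{S2Pcor}. Throughout I will use the three facts already recorded: $\iota=\xi\iota'$, the map $\iota'$ is homogeneous with $\iota'(F_d)\subseteq P_d$ for $d\ge 1$, and for a $\w$-homogeneous $h\in P_d$ one has $\xi(h)=h\,s^{d}$, so that $\iota(g)=\iota'(g)\,s^{d}$ for every homogeneous $g\in F_d$. Recall also that $I'$ is by definition the $P$-ideal generated by $\Sigma\cdot\iota'(\bigcup_{d>0}I_d)$ and that $I'^{S}$ has the $P$-module spanning set $\{f\,s^{j}\mid f\in I'_i,\ j\ge i\ge 0\}$.

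For $N\subseteq J$ I argue on generators. If $g\in I_d$ with $d>0$, then $\iota'(g)\in I'$ is $\w$-homogeneous of weight $d$, so $\iota'(g)\in I'_d$; since $d\ge d$, the element $\iota(g)=\iota'(g)\,s^{d}$ is one of the $P$-spanning elements of $J$, hence $\iota\big(\bigcup_{d>0}I_d\big)\subseteq J$ and therefore $N\subseteq J$. For the reverse inclusion it suffices to check that $f\,s^{j}\in N$ for every $\w$-homogeneous $f\in I'_i$ and every $j\ge i$. Write $f$ through the $P$-generators of $I'$, split each coefficient into $\w$-homogeneous parts, and extract the weight-$i$ component; since $\w(mn)=\max(\w(m),\w(n))$ (property (ii) of Definition~\ref{weight}), a summand $q\,\sigma^{a}(\iota'(g))$ with $g\in I_d$, $d>0$, $q\in P$ can only contribute to weight $i$ when $a+d\le i$. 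Hence we may take $f=\sum_k q_k\,\sigma^{a_k}(\iota'(g_k))$ with $g_k\in I_{d_k}$, $d_k>0$, $q_k\in P$ and $a_k+d_k\le i\le j$. Using the commutation rule $\sigma^{a_k}(\iota'(g_k))\,s^{a_k}=s^{a_k}\iota'(g_k)$ together with $\iota'(g_k)\,s^{d_k}=\iota(g_k)$ one gets
\[
q_k\,\sigma^{a_k}(\iota'(g_k))\,s^{j}
= q_k\,s^{a_k}\,\iota'(g_k)\,s^{\,j-a_k}
= q_k\,s^{a_k}\,\iota(g_k)\,s^{\,j-a_k-d_k},
\]
which lies in $N$ because $j-a_k-d_k\ge 0$ and $\iota(g_k)\in\iota(I)$. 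Summing over $k$ gives $f\,s^{j}\in N$, so $J\subseteq N$, whence $J=N$. Since $N$ is precisely the extension of $\iota(I)$ to $S$, the first assertion of the proposition is proved.

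For the bijection: $\iota$ is an isomorphism of graded algebras onto $R$, so it maps graded ideals of $F$ bijectively onto graded ideals of $R$; by Proposition~\ref{gengood} the latter are in bijection with their extensions to $S$, and we have just identified the extension of $\iota(I)$ with $J=I'^{S}$. Concretely, $I=\iota^{-1}(J\cap R)$ recovers $I$ from $J$ (using $J\cap R=\iota(I)$ from Proposition~\ref{gengood}), so $I\mapsto J$ is injective, and it is surjective onto the class of skew letterplace analogues by definition. Finally, since $I'^{S}=J$ and, by Proposition~\ref{S2Pcor}, $I'\mapsto I'^{S}$ is itself a bijection between $\w$-graded $\Sigma$-invariant proper ideals of $P$ and their skew analogues in $S$ (with inverse $J\mapsto\pi(J)$), we also obtain that $I\mapsto I'$ is a bijection onto the class of letterplace analogues, with $I'=\pi(J)$. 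As usual the statement is understood for proper graded ideals, the degenerate case $I=F$ being excluded by the standing hypothesis $I\subsetneq F$.

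The only real obstacle is the inclusion $J\subseteq N$: one must combine the $\max$-behaviour of the weight $\w$ with the commutation $s\,h=\sigma(h)\,s$ to ensure that, after passing to weight-$i$ components, every surviving summand has $s$-exponent at least $a_k+d_k$, so that the rewriting $\iota'(g_k)\,s^{d_k}=\iota(g_k)$ can be applied without creating negative powers of $s$. Everything else is a formal assembly of Proposition~\ref{gengood}, Proposition~\ref{S2Pcor} and the factorisation $\iota=\xi\iota'$.
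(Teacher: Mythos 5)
Your proof is correct and follows the same overall strategy as the paper: identify the skew analogue $I'^{S}$ with the extension of $\iota(I)$ to $S$, and then read off the bijection by composing Proposition~\ref{gengood}, Proposition~\ref{S2Pcor} and the factorization $\iota=\xi\iota'$. The one place where you diverge in presentation is the identification $J=N$: the paper dispatches this in a sentence by appealing to the earlier unnamed proposition that $\xi$ of a $\w$-homogeneous $\Sigma$-basis of a $\w$-graded $\Sigma$-invariant ideal is an $s$-homogeneous basis of its skew analogue (applied with $G=\iota'(\bigcup_{d>0}I_d)$, which yields precisely the generators $\iota(f)=\iota'(f)s^{d}$ of the extension $N$). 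You instead re-derive the inclusion $J\subseteq N$ from first principles, via the decomposition into $\w$-homogeneous components, the bound $\max(\w(q),a+d)=i\Rightarrow a+d\le i$, and the commutation $\sigma^{a}(h)s^{a}=s^{a}h$; this is exactly the computation hiding inside the proof of that earlier proposition (which the paper carries out using Lemma~\ref{divbound}), so your argument is a transparent unwinding of the same lemma rather than a genuinely different route. Both are valid; the paper's is more modular, yours is more self-contained.
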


\begin{proof}
Let $J'$ be the extension of $\iota(I)$ to $S$. By definition $J'$ is the ideal
generated by the elements $\iota(f) = \iota'(f) s^i$, for all $f\in I_i$.
Since $I'$ is $\Sigma$-generated by the $\w$-homogeneous elements $\iota'(f)$
of weight $i$, we conclude that $J = I'^S = J'$. Moreover, the bijective
correspondence between graded two-sided ideals of $F$ and their letterplace
analogues in $P$ is obtained by composing the bijections contained
in Proposition \ref{S2Pcor} and Proposition \ref{gengood}.
\end{proof}

The bijection between graded ideals of $F$ and their letterplace analogues
has been introduced in \cite{LSL} and called ``letterplace correspondence''.
The motivation of such name is essentially historical since the linear
map $\iota'$ was first considered in \cite{Fe,DRS}. Note that in these
articles the endomorphism $\sigma$ and the algebra embedding $\iota$
were not introduced. The polynomial ring $P$ was named there the ``letterplace
algebra'' because in the monomial $\iota'(x_{i_1}\cdots x_{i_d}) =
x_{i_1}(1)\cdots x_{i_d}(d)$ the indices $1,\ldots,d$ play the role
of the ``places'' where the ``letters'' $x_{i_1},\ldots,x_{i_d}$ occur
in the word $x_{i_1}\cdots x_{i_d}\in\Mon(F)$.

Fix now a monomial ordering $\prec$ on the algebra $S$ that is $\sigma$
is compatible with the restriction of $\prec$ to $\Mon(P)$. By restricting
$\prec$ to $R$ one obtains a monomial ordering on $F$. Denote by $V$
the image of the map $\iota'$ that is $V = \bigoplus_i V_i$ is a graded
subspace of $P$ where $V_i = P_{1^i}\subset P_i$. Note that $V$ is a left
$R$-module isomorphic to $R\approx F$. In fact, $V = \pi(R)$ and
the restriction $\pi:R\to V$ has the restriction $\xi:V\to R$ as its inverse.
In \cite{LSL} one has the following result which is now a direct
consequence of Proposition \ref{gbP2S} and Proposition \ref{gbgood}.

\begin{proposition}
Let $I\subsetneq F$ be a graded ideal and denote by $J\subsetneq P$ its
letterplace analogue. Then $J$ is a multigraded (hence $\w$-graded)
$\Sigma$-invariant ideal of $P$. If $G$ is a multi-homogeneous
(hence $\w$-homogeneous) \Gr\ $\Sigma$-basis of $J$ then $\iota'^{-1}(G\cap V)$
is a homogeneous \Gr\ basis of $I$.
\end{proposition}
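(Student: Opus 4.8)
The plan is to obtain the statement by composing the two correspondences already established: the one between $\w$-graded $\Sigma$-invariant ideals of $P$ and their skew analogues in $S$ (Proposition \ref{S2Pcor}, together with Proposition \ref{gbP2S} for \Gr\ bases), and the one between graded ideals of $R$ and their extensions to $S$ (Proposition \ref{gengood}, together with Proposition \ref{gbgood}).

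First I would verify the structural assertions about $J$. By definition $J$ is $\Sigma$-generated by $\iota'(I)$, and $\iota'$ sends each word $x_{i_1}\cdots x_{i_d}$ to the monomial $x_{i_1}(1)\cdots x_{i_d}(d)$ of multidegree $1^d$; since $\Sigma$ acts on multidegrees by the shift $\mu\mapsto i\cdot\mu$, the set $\Sigma\cdot\iota'(I)$ consists of multi-homogeneous elements, so $J$ is multigraded. Multigraded ideals are $\w$-graded because $\w$ is constant on each multidegree class (it is the maximum of the place indices occurring in a monomial), and $J$ is $\Sigma$-invariant by construction. This settles the first sentence. Note also that $I\subsetneq F$ forces $\iota'(I)$ to contain no nonzero constant, hence $J\subsetneq P$, so Propositions \ref{gbP2S} and \ref{gbgood} are applicable.

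Next I would use the preceding proposition of this section, which identifies the skew analogue $J^S$ with the extension to $S$ of the ideal $\iota(I)\subset R$. Given a multi-homogeneous \Gr\ $\Sigma$-basis $G$ of $J$, Proposition \ref{gbP2S} gives that $G^S=\xi(G)$ is an $s$-homogeneous \Gr\ basis of $J^S$; moreover $G^S$ is again multi-homogeneous, since $\xi$ is a homogeneous map and $G$ is multi-homogeneous. As $J^S$ is exactly the extension of $\iota(I)$, Proposition \ref{gbgood} then yields that $G^S\cap R$ is a homogeneous \Gr\ basis of $\iota(I)\subset R$. Finally, $\iota\colon F\to R$ is an isomorphism of graded algebras transporting the chosen monomial ordering of $R$ to that of $F$, so $\iota^{-1}(G^S\cap R)$ is a homogeneous \Gr\ basis of $I$. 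To conclude it remains to rewrite $\iota^{-1}(G^S\cap R)$ as $\iota'^{-1}(G\cap V)$: an element $f s^i\in G^S$ (with $f\in P$ of weight $i$, so $\xi(f)=f s^i$) lies in $R$ precisely when $f$ is multi-homogeneous of multidegree $1^i$, i.e. when $f\in V_i\subset V$, hence $G^S\cap R=\xi(G\cap V)$; using $\iota=\xi\iota'$ and $\pi\xi=\mathrm{id}$ one gets $\iota^{-1}(G^S\cap R)=\iota'^{-1}\bigl(\pi(\xi(G\cap V))\bigr)=\iota'^{-1}(G\cap V)$.

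The only genuine work, and hence the main (modest) obstacle, is the bookkeeping of this last identification: one must check that ``apply $\xi$, then intersect with $R$'' agrees with ``intersect with $V$, then apply $\xi$'', which rests on the equivalence between membership in $R$ and being $\w$-homogeneous of multidegree $1^i$, together with the compatibility of the monomial orderings under $\iota$. Everything else is a direct citation of the results listed above, composed along the chain from $F$ to $P$ to $S$ and back.
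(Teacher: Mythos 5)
Your proposal is correct and matches the paper's intended route: the paper simply asserts that this proposition is ``a direct consequence of Proposition \ref{gbP2S} and Proposition \ref{gbgood}'' (composed via the preceding identification $J^S=$ extension of $\iota(I)$), and you have spelled out exactly that composition, including the bookkeeping identity $G^S\cap R=\xi(G\cap V)$ and hence $\iota^{-1}(G^S\cap R)=\iota'^{-1}(G\cap V)$, together with the routine verifications that $J$ is multigraded, $\Sigma$-invariant, proper, and that $G^S$ is multi-homogeneous so Proposition \ref{gbgood} applies.
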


From this result and algorithm \SigmaGBasis\ one obtains the correctness
of the following procedure which also has been introduced in \cite{LSL}.

\suppressfloats[b]
\begin{algorithm}\caption{\FreeGBasis}
\begin{algorithmic}[0]
\State \text{Input:} $H$, a homogeneous basis of a graded two-sided
ideal $I\subsetneq F$.
\State \text{Output:} $\iota'^{-1}(G)$, a homogeneous \Gr\ basis of $I$.
\State $G:= \iota'(H)$;
\State $B:= \{(f,g) \mid f,g\in G\}$;
\While{$B\neq\emptyset$}
\State choose $(f,g)\in B$;
\State $B:= B\setminus \{(f,g)\}$;
\ForAll{$i\geq 0$ such that $\spoly(f,s^i\cdot g)\in V$}
\State $h:= \Reduce(\spoly(f,s^i\cdot g), \Sigma\cdot G)$;
\If{$h\neq 0$}
\State $B:= B\cup\{(h,h),(h,k),(k,h),\mid k\in G\}$;
\State $G:= G\cup\{h\}$;
\EndIf;
\EndFor;
\EndWhile;
\State \Return $\iota'^{-1}(G)$.
\end{algorithmic}
\end{algorithm}

\newpage

Assume finally that the graded ideal $I\subsetneq F$ has a finite number
of generators up to some degree $d > 0$. Note that the $d$-truncated algorithm
\FreeGBasis\ has now termination provided by Proposition \ref{Ptermin}.

\section{Examples and timings}

In this section we propose an explicit computation and some timings
in order to provide some concrete experience with the algorithms
we introduced. 

Let $X = \{x\}$ and consider the ring of ordinary difference polynomials
$P = K[X\times\N]$ that is $P$ is the polynomial ring in the variables $x(j)$
which are the shifts of a single univariate function $x = x(0)$.
Moreover, let $P$ be endowed with the lexicographic monomial ordering
where $x(0) < x(1) < \ldots$. Denote by $J$ the difference ideal generated by
the single difference polynomial $g_1 = x(2)x(0) - x(1)$. This ideal
has been considered in \cite{GL} as an example of an ordinary difference
equation with periodic solutions. A variant of this equation has been also
considered in \cite{Ge}.
By the algorithm $\SigmaGBasis$ one can compute that $J$ has a finite
\Gr\ difference basis with elements
\[
\begin{array}{l}
g_1 = x(2)x(0) - x(1),
g_2 = x(4)x(1) - x(3)x(0),
g_3 = x(3)^2x(0) - x(3), \\
g_4 = x(4)x(3)x(0) - x(4),
g_5 = x(5) - x(4)x(0).
\end{array}
\]
Let us see how the algorithm \SigmaGBasis\ is able to obtain this.
If $\Sigma = \langle \sigma \rangle$ where $\sigma:P\to P$ is the
shift endomorphism $x(i)\mapsto x(i+1)$, then the ideal $J$ is
by definition $\Sigma$-generated by $\{g_1\}$ that is it is generated
by $\Sigma\cdot\{g_1\} = \{x(2)x(0) - x(1),x(3)x(1) - x(2),
x(4)x(2) - x(3),\ldots\}$. Up to the product criterion, to compute
a \Gr\ basis of $J$ one should consider all the S-polynomials
$\spoly(\sigma^i\cdot g_1,\sigma^{i+2}\cdot g_1)$ for any $i\geq 0$.
We are interested in fact in computing a \Gr\ $\Sigma$-basis of $J$ and
hence we can apply the $\Sigma$-criterion that kills all these S-polynomials
except for $\spoly(g_1,\sigma^2\cdot g_1)$. The reduction of this element
with respect to $\Sigma\cdot\{g_1\}$ leads to $g_2 = x(4)x(1) - x(3)x(0)$.
Now the current $\Sigma$-basis of $J$ is $\{g_1,g_2\}$. The S-polynomials that
survive to product and $\Sigma$-criterion are now
\[
\begin{array}{l}
\spoly(g_2,\sigma\cdot g_1),
\spoly(g_2,\sigma^2\cdot g_1),
\spoly(g_2,\sigma^4\cdot g_1),
\spoly(g_1,\sigma\cdot g_2), \\
\spoly(g_2,\sigma^3\cdot g_2).
\end{array}
\]
Then $\spoly(g_2,\sigma^2\cdot g_1)\to 0$ and $\spoly(g_2,\sigma\cdot g_1)$
reduces to $g_3$ with respect to $\Sigma\cdot\{g_1,g_2\}$. The list of new
S-polynomials arising from $g_3$ that pass product and $\Sigma$-criterion is
\[
\begin{array}{l}
\spoly(g_3,g_1),
\spoly(g_3,\sigma\cdot g_1),
\spoly(g_3,\sigma^3\cdot g_1),
\spoly(g_3,\sigma^2\cdot g_2), \\
\spoly(g_2,\sigma\cdot g_3),
\spoly(g_1,\sigma^2\cdot g_3),
\spoly(g_3,\sigma^3\cdot g_3),
\spoly(g_2,\sigma^4\cdot g_3).
\end{array}
\]
We have now that $\spoly(g_3,\sigma\cdot g_1)\to 0,
\spoly(g_3,g_1)\to 0$ and $\spoly(g_2,\sigma\cdot g_3)\to g_4$.
Up to all criteria, including chain criterion, the list of S-polynomials
has to be updated with the following ones
\[
\begin{array}{l}
\spoly(g_4,\sigma\cdot g_1),
\spoly(g_4,\sigma^2\cdot g_1),
\spoly(g_4,\sigma^3\cdot g_1),
\spoly(g_4,\sigma^4\cdot g_1), \\
\spoly(g_4,\sigma^3\cdot g_2),
\spoly(g_2,\sigma\cdot g_4),
\spoly(g_1,\sigma^2\cdot g_4),
\spoly(g_3,\sigma^3\cdot g_4), \\
\spoly(g_4,\sigma^3\cdot g_4),
\spoly(g_2,\sigma^4\cdot g_4),
\spoly(g_4,\sigma^4\cdot g_4).
\end{array}
\]
Now, one has the following reductions: $\spoly(g_4,\sigma\cdot g_1)\to 0,
\spoly(g_4,\sigma^2\cdot g_1)\to 0$ and $\spoly(g_1,\sigma\cdot g_2)\to
f = x(5)x(1) - x(3)x(0)^2$. We will show that the element $f$ of the
\Gr\ $\Sigma$-basis of $J$ is in fact redundant because $g_5$ is also
in this basis. The new S-polynomials arising from $f$ are
\[
\begin{array}{l}
\spoly(f,\sigma\cdot g_1),
\spoly(f,\sigma^5\cdot g_1),
\spoly(f,g_2),
\spoly(f,\sigma\cdot g_2),
\spoly(f,\sigma^4\cdot g_2), \\
\spoly(g_1,\sigma\cdot f),
\spoly(g_3,\sigma^2\cdot f),
\spoly(g_4,\sigma^2\cdot f),
\spoly(g_2,\sigma^3\cdot f), \\
\spoly(g_4,\sigma^3\cdot f),
\spoly(f,\sigma^4\cdot f).
\end{array}
\]
Then, we start again with reductions:
$\spoly(f,\sigma\cdot g_2)\to 0,\spoly(f,\sigma\cdot g_1)\to 0$,
$\spoly(g_3,\sigma^3\cdot g_1)\to 0,\spoly(g_2,\sigma\cdot g_4)\to g_5$
and therefore $f$ is redundant. The last S-polynomials to be added
are
\[
\begin{array}{l}
\spoly(g_5,\sigma^3\cdot g_1),
\spoly(g_5,\sigma^5\cdot g_1),
\spoly(g_5,\sigma\cdot g_2),
\spoly(g_5,\sigma^4\cdot g_2),\\
\spoly(g_5,f),
\spoly(g_5,\sigma^4\cdot f).
\end{array}
\]
If $G = \{g_1,g_2,g_3,g_4,g_5\}$ then we have that all remaining S-polynomials
to be considered reduce to zero with respect to $\Sigma\cdot G$, that is $G$
is a \Gr\ $\Sigma$-basis (difference basis) of the $\Sigma$-ideal
(difference ideal) $J$. 

We present now some timings obtained with an implementation of the algorithm
\FreeGBasis. This implementation, which is still under development,
is an improvement of the one we presented in \cite{LSL}. We decided not
to start implementing also \FreeGBasis2\ until \FreeGBasis\ will evolute
to some final form. We propose here new comparisons with the system
\textsc{Magma} that contains one of the most effective implementations
of the classical algorithm \cite{Mo,Gr1,Uf} for computing non-commutative
\Gr\ bases. Note that this implementation takes also advantage by the use of
Faug{\`e}re's F4 approach. The tests were performed on a PC with four Intel
Core i7 CPU 940 2.93GHz processors with 12 GB RAM running Ubuntu Linux.
We used \textsc{Singular} 3-1-3 with \texttt{freegb.lib} release 14203
and \textsc{Magma} version 2.17-8. We measured the time for real execution
of the process (thus differently to the way we did comparisons in \cite{LSL})
in "min:sec" format.  The number of generators in the input and in the output
are given as well.

\medskip
\begin{center}
\begin{tabular}[h]{|l|c|c|c|c|}
\hline
Example & {\sc Magma} & {\sc Singular} & $\#$In & $\#$Out \\
\hline
\texttt{G3-5-6-2d12} & 0:10 & 1:15 & 11 & 5885 \\
\texttt{G2-3-13-4d10} & 0:05 & 0:01 & 10 & 275 \\
\texttt{G3-8-13d8} & 0:05 & 0:04 & 18 & 1490 \\
\texttt{serf-g2d8} & 0:05 & 0:01  & 17 & 6 \\
\texttt{cliff5d9} & 0:08 & 0:12 & 41 & 168 \\
\texttt{C41d6}  & 0:05 & 0:10 & 6 & 50 \\
\texttt{C41Xd5} & 0:08 & 0:04 & 6 & 44 \\
\texttt{C41Yd5} & 0:05 & 0:03 & 6 & 44 \\
\texttt{C41Zd6} & 0:08 & 0:10 &  6 & 44 \\
\texttt{C41Wd6} & 0:05 & 0:01 & 6 & 35 \\
\hline
\end{tabular}
\end{center}

\medskip
This table shows essentially that the letterplace approach to the computation
of non-commutative \Gr\ bases is comparable with the classical algorithms
and hence it is feasible. From the viewpoint of implementations we record
that \textsc{Magma} achieved significant improvements with respect to comparisons
included in \cite{LSL} and this stimulate us to further optimize our code.
In fact, there is an ongoing work to enhance \texttt{freegb.lib} in
\textsc{Singular}. We will make more extensive comparisons in future articles
that will be concentrated on technical aspects of implementing the
letterplace algorithms.

Here is a brief description of the examples we considered for testing.
In all the examples the last integer indicates the total degree that bounds
the computations. The examples \texttt{G3-5-6-2, G2-3-13-4} refer to the class
of presented groups $G(l,m,n;q) = \langle r,s\mid r^l, s^m, (rs)^n,
[r,s]^q \rangle$, where $[r,s]$ denotes the commutator.
As for the example \texttt{G3-8-13}, this is one from the class of groups
$G(m,n,p) = \langle a,b,c\mid a^m, b^n, c^p, (ab)^2, (bc)^2, (ca)^2,
(abc)^2\rangle$. All these groups has been considered by Coxeter \cite{Co}
for the problem of determining their finiteness. For our computations
we considered a homogenization of the ideal of the free associative algebra
defining the group algebra of such groups.  The example \texttt{serf-g2}
are modified full Serre relations built from the Cartan matrix $G_2$.
The following non-commutative polynomials are explicitly the generators
we considered for homogenization.
\[
\begin{footnotesize}
\begin{array}{l}
f_1 f_2 f_2 - 2 f_2 f_1 f_2 + f_2 f_2 f_1,
e_1 e_2 e_2 - 2 e_2 e_1 e_2 + e_2 e_2 e_1,\\
f_1 f_1 f_1 f_1 f_2 - 4 f_1 f_1 f_1 f_2 f_1 + 6 f_1 f_1 f_2 f_1 f_1 -
4 f_1 f_2 f_1 f_1 f_1 + f_2 f_1 f_1 f_1 f_1, \\
e_1 e_1 e_1 e_1 e_2 - 4 e_1 e_1 e_1 e_2 e_1 + 6 e_1 e_1 e_2 e_1 e_1 -
4 e_1 e_2 e_1 e_1 e_1 + e_2 e_1 e_1 e_1 e_1,\\
f_2 e_1 - e_1 f_2,
f_1 e_2 - e_2 f_1,
f_1 e_1 - e_1 f_1 + h_1,
f_2 e_2 - e_2 f_2 + h_2,\\
h_1 h_2 - h_2 h_1,
h_1 e_1 - e_1 h_1 - 2 e_1,
f_1 h_1 - h_1 f_1 - 2 f_1,
h_1 e_2 - e_2 h_1 + e_2,\\
f_2 h_1 - h_1 f_2 + f_2,
h_2 e_1 - e_1 h_2 + 3 e_1,
f_1 h_2 - h_2 f_1 + 3 f_1,
h_2 e_2 - e_2 h_2 - 2 e_2,\\
f_2 h_2 - h_2 f_2 - 2 f_2.
\end{array}
\end{footnotesize}
\]
Let $F_5 = K\langle x_1,x_2,x_3,x_4,x_5 \rangle$ and define
$\Gamma\subset \End_K(F_5)$ the submonoid of all algebra endomorphisms
sending variables into variables. The example \texttt{cliff5} is the ideal
$I\subset F_5$ which is $\Gamma$-generated by the polynomials
$[x_1\circ x_2,x_3] = (x_1x_2 + x_2x_1) x_3 - x_3 (x_1x_2 + x_2x_1)$ and
$s_5 = \sum_{\pi\in\S_5} \sgn(\pi)
x_{\pi(1)}x_{\pi(2)}x_{\pi(3)}x_{\pi(4)}x_{\pi(5)}$. The quotient ring $F_5/I$
is the generic Clifford algebra in 5 variables of a 4-dimensional vector space.
Finally, the family \texttt{C41} of examples originates from random linear
substitutions into the ideal of 6 generators, defining the non-cancellative
monoid $C(4,1)$ (see \cite{JO}) and includes also variations of those.
For instance, \texttt{C41W} is given by
\[
\begin{array}{l}
x_4 x_4-25 x_4 x_2-x_1 x_4-6 x_1 x_3-9 x_1 x_2+x_1 x_1, \\
x_4 x_3+13 x_4 x_2+12 x_4 x_1-9 x_3 x_4+4 x_3 x_2+41 x_3 x_1-7 x_1 x_4-x_1 x_2, \\
x_3 x_3-9 x_3 x_2+2 x_1 x_4+x_1 x_1,
17 x_4 x_2-5 x_2 x_2-41 x_1 x_4, \\
x_2 x_2-13 x_2 x_1-4 x_1 x_3+2 x_1 x_2-x_1 x_1,
x_2 x_1+4 x_1 x_2-3 x_1 x_1.
\end{array}
\]
while \texttt{C41} is given by
\[
\begin{footnotesize}
\begin{array}{l}
189 x_4 x_4+63 x_4 x_3-66 x_4 x_2-161 x_4 x_1-103 x_3 x_4+19 x_3 x_3+262 x_3 x_2+467 x_3 x_1 -\\
360 x_2 x_4-144 x_2 x_3+24 x_2 x_2+136 x_2 x_1+175 x_1 x_4+35 x_1 x_3-160 x_1 x_2-315 x_1 x_1,\\

 27 x_4 x_4+409 x_4 x_3+82 x_4 x_2-42 x_4 x_1-57 x_3 x_4-403 x_3 x_3+26 x_3 x_2-42 x_3 x_1 -\\
50 x_2 x_4-434 x_2 x_3-12 x_2 x_2-14 x_2 x_1+45 x_1 x_4+435 x_1 x_3+30 x_1 x_2,\\

232 x_4 x_4-29 x_4 x_3+77 x_4 x_2+332 x_4 x_1-147 x_3 x_4+175 x_3 x_3+60 x_3 x_2-269 x_3 x_1 -\\
107 x_2 x_4+184 x_2 x_3+83 x_2 x_2-217 x_2 x_1+28 x_1 x_4-217 x_1 x_3-139 x_1 x_2+120 x_1 x_1,\\ 

52 x_4 x_4+233 x_4 x_3-129 x_4 x_2+135 x_4 x_1-248 x_3 x_4-205 x_3 x_3+171 x_3 x_2+138 x_3 x_1 +\\
100 x_2 x_4-58 x_2 x_3-177 x_2 x_1+84 x_1 x_4+39 x_1 x_3-43 x_1 x_2-73 x_1 x_1,\\

-225 x_4 x_4-150 x_4 x_3-179 x_4 x_2-262 x_4 x_1+91 x_3 x_4-94 x_3 x_3+225 x_3 x_2+74 x_3 x_1 +\\
214 x_2 x_4+224 x_2 x_3+90 x_2 x_2+266 x_2 x_1-175 x_1 x_4-50 x_1 x_3-205 x_1 x_2-190 x_1 x_1,\\

289 x_4 x_4-170 x_4 x_3-289 x_4 x_2-153 x_4 x_1-186 x_3 x_4+95 x_3 x_3+177 x_3 x_2+106 x_3 x_1 -\\
231 x_2 x_4+35 x_2 x_3+168 x_2 x_2+175 x_2 x_1+241 x_1 x_4+60 x_1 x_3-115 x_1 x_2-233 x_1 x_1.
\end{array}
\end{footnotesize}
\]


\section{Conclusions and future directions}

From the previous sections we can conclude that, owing to the notion
of \Gr\ $\Sigma$-basis and the skew letterplace embedding $\iota$,
the theory of non-commutative \Gr\ bases developed for the free associative
algebra $F = \KX$ using the concepts of overlappings, tips or obstructions
\cite{Gr1,Mo,Uf} can be deduced from, unified to the classical Buchberger theory
for commutative polynomial rings based on S-polynomials, at least
in the graded case. From a practical point of view, one obtains
the alternative algorithms \FreeGBasis\ and \FreeGBasis2\ which are
implementable in any computer algebra system providing
commutative \Gr\ bases. The feasibility of such methods has been already
shown in \cite{LSL} and confirmed by the new timings we have collected
in Section 8. 

Moreover, the general theory developed in this paper can be applied
to any context where a monoid of endomorphisms $\Sigma$ acts on
the polynomial algebra $P = K[X]$ in a way which is compatible with
\Gr\ bases theory. We propose not only an abstract definition of what
this may mean contributing to a current research trend (see for instance
\cite{DLS,AH,BD}), but also a method to transfer the related algorithms
from $P$ to the skew monoid ring $S = P *\Sigma$ when a suitable grading
is given for $P$. This theory applies in particular to the shift operators
and hence a stimulating field of applications are the rings of difference
polynomials. The simple calculation proposed in Section 8 gives some
feeling of this. In particular, we aim to extend the \Gr\ $\Sigma$-bases theory
to any finitely generated free commutative monoid $\Sigma =
\langle \sigma_1,\ldots,\sigma_r \rangle$ in order to cover partial difference
ideals and to extend the letterplace method for $F$ to the non-graded
case by means of suitable (de)homogenization techniques. An effective
implementation of all proposed algorithms will be clearly important
to understand the actual performance of the methods.

\section*{Acknowledgments}

First of all, we would like to thank Vladimir Gerdt for introducing us
to the theory of difference polynomial rings and thus leading us to note
that the letterplace algebra $K[X\times\N]$ endowed with the endomorphism
$x_i(j)\mapsto x_i(j+1)$ is just an instance of them (ordinary case).
We are grateful to Albert Heinle, Benjamin Schnitzler and Grischa Studzinski
for adopting the {\sc SymbolicData} project \cite{SD} for handling ideals
in free algebras. The timings in this article were obtained with the help
of the updated {\sc SymbolicData} system. We like also to thank
the anonymous reviewers for their valuable comments and suggestions.


\end{document}